\newtheorem{theorem}{Theorem}[section]
\newtheorem{definition}[theorem]{Definition}
\newtheorem{lemma}[theorem]{Lemma}
\newtheorem{proposition}[theorem]{Proposition}
\newtheorem{corollary}[theorem]{Corollary}
\newtheorem{remark}[theorem]{Remark}
\newtheorem{assump}[theorem]{Assumptions}
\newcommand{\R}{{\mathbb R}}
\newcommand{\C}{{\mathcal C}}
\newcommand{\Z}{{\mathbb{Z}}}
\newcommand{\N}{{\mathbb N}}
\newcommand{\F}{{\mathcal F}}
\newcommand{\E}{{\mathbb E}}
\newcommand{\B}{{\mathcal B}}
\def\R{\mathbb{R}}
\def\N{\mathbb{N}}
\def\E{\mathbb{E}}
\def\F{\mathcal F}
\def\L{\mathcal{L}}
\title{On the Kesten-Stigum theorem in $L^2$ beyond $\lambda$-positivity}
\author{Matthieu Jonckheere$^*$ and Santiago Saglietti$^\dagger$}
\address[$*$]{Departamento de Matem\'atica, Universidad de Buenos Aires, 1428 Buenos Aires, Argentina.\newline email:\,{\tt mjonckhe@dm.uba.ar.}}
\address[$\dagger$]{Faculty of Industrial Engineering and Management, Technion, 3200003 Haifa, Israel.\newline email:\,{\tt saglietti.s@technion.ac.il.}} 
\begin{document}
\maketitle

\begin{abstract}
We study supercritical branching processes in which all particles evolve according to some general Markovian motion (which may possess absorbing states) and branch independently at a fixed constant rate. Under fairly natural assumptions on the asymptotic distribution of the underlying motion,
we first show using only probablistic tools that there is convergence in $L^2$ of the empirical measure (normalized by the mean number of particles) if and only if an associated additive martingale is bounded in $L^2$. 
This is a significant improvement over previous results, which were mainly restricted to $\lambda$-positive motions. We then investigate under which conditions this limit is strictly positive on the event of non-extinction and show that this occurs whenever, on the latter event, particles do not accumulate on the boundary of the state space. \mbox{In particular,} this property also yields the convergence of the real empirical measure (normalized by the number of particles).
 Moreover, building on previous results we prove that if the motion is $\lambda$-positive then these limits hold also almost surely whenever the Doob's $h$-transform of this motion admits a suitable Lyapunov functional.
Finally, we illustrate our results for a variety of different motions: ergodic motions without absorption, $\lambda$-positive systems either transient or with absorption, and also certain non $\lambda$-positive systems such as the Brownian motion with negative drift killed at $0$. The validity of a law of large numbers for the empirical measure in this last example was announced by Kesten without proof in \cite{kesten1978}. Our results allow us to give a partial proof to Kesten's claim.

\end{abstract} 

\section{Introduction}

\subsection{State of the art and motivation}

Since the late seventies there has been an intensive effort of research dedicated to proving laws of large numbers for branching processes. 
This encompasses the empirical measure of the process normalized in two different ways: either by the mean number of particles (mean-normalized empirical measure or MNEM) or by the total number of particles (empirical measure or EM). 
The original result proved by Kesten and Stigum in \cite{kesten1966} considers a multi-type Galton-Watson process with a finite number of types and establishes the almost sure convergence for both MNEM and EM. In \cite{athreya2}, Athreya later proved the $L^2$-convergence using a spectral decomposition.
However, the situation becomes much more delicate when dealing with an infinite number of types or with a branching Markov process instead, i.e. a system of particles evolving according to a Markovian motion which branch independently (the latter can be seen as a continuous-time Galton-Watson process in which the space of types is identified with the motion's state space).
In this context, almost all available results require the system in consideration to be $\lambda$-positive. For a general homogeneous Markov process with a well-defined Doob's $h$-transform, $\lambda$-positivity simply means that the transformed process is ergodic. 
For branching Markov processes, this is not a mere technical assumption but it is in fact equivalent to supposing that the motion of a certain spine describing the genealogy of the branching process (which is sometimes described in the literature as an immortal particle)
is positive recurrent. Moreover, in the case of a constant branching rate, the branching process will be $\lambda$-positive if and only if the underlying motion is.
This property allows to implement convenient changes of measure which, in turn, somehow reduce the problem of proving a law of large numbers for the branching process into that of proving one for a positive recurrent Markov motion instead.
Early results in this direction \mbox{were given in \cite{moy1967,asmussen1976},} while the conceptual proof in \cite{kurtz1997} which introduced the notion of spine for the branching process sprouted a renewal of results for a variety of more complex systems, including  \cite{englander2010,harris2014,harris2015}.
Finally, in the recent works \cite{ChenShio2007,chen2016}, the previous techniques combined with functional analytical methods were used to obtain convergence results in the context of branching symmetric Hunt processes, under the assumptions that the branching process is $\lambda$-positive and that its generator has a spectral gap.
We refer to \cite{englander2014spatial,englandersurvey} for a more detailed overview of all known results and recent developments on this matter. 
However, we emphasize that, except for \cite{watanabe1967} in which the underlying motion is a $0$-drift Brownian motion killed at $0$ for which an explicit spectral decomposition is available (which is a remarkable and exceptional feature of this system), or the result in \cite{englander2009law} for superprocesses, in all of the aforementioned results the hypothesis of $\lambda$-positivity is key.
  
As a result, for general branching processes which are not $\lambda$-positive much less is known. Indeed, in these cases the trajectories of the spine are not localized, a fact which leads to considerably more involved situations. 
Nonetheless, one still expects similar LLN's to hold for such systems. As a matter of fact, in \cite{kesten1978} Kesten announced such a result for branching Markov processes with a constant branching rate and which are driven by a Brownian motion with negative drift killed at $0$,
which is perhaps the canonical example of a system which is not $\lambda$-positive. Kesten claimed that a strong LLN holds provided that the associated branching process is supercritical (i.e. $r>\frac{c^2}{2}$ where $r$ is the branching rate and $-c$ is the negative drift) but omits the proof of this fact which, to the best of our knowledge, has remained undisclosed so far. 

In addition to proving convergence of the MNEM, it is also of crucial importance to establish the strict positivity of its limit on the event of non-extinction. 
Indeed, it is only when this occurs that one can guarantee that, on the event of non-extinction, the number of particles scales like its mean and the EM converges to the (quasi)-stationary distribution of the driving motion, \mbox{see \eqref{eq:convpexp} below.} Moreover, as a consequence of the convergence of the EM one obtains a simple and direct method to simulate this distribution which, in many cases, is not explicitly known. However,
showing that the limit of MNEM is strictly positive on the event of non-extinction is a subtle question (in fact, it is not always true, see Section \ref{sec:tou} for instance) and the literature addressing this matter is very limited when dealing with more involved situations with absorption and/or infinite state spaces, see \cite{harris2009} for instance or also \cite{harris2006v1} for results on similar (but not identical) martingale limits.

\subsection*{Contribution}
Our main interest is to derive a general method for proving Kesten-Stigum type results for supercritical branching Markov processes with a constant branching rate, which works beyond the $\lambda$-positivity setting and holds under minimal assumptions on the underlying motion. 

We first study the $L^2$-convergence of MNEM. 
We note that, in general, the (mean-normalized) number of particles inside a given set $B$ is not a martingale here (not even when $B$ is the entire state space if in the presence of absorption) so that it is not trivial at all to prove its convergence. Still, our approach is purely probabilistic and uses only simple spinal decomposition techniques. More precisely, our main tools are the ``many-to-one'' and ``many-to-two'' lemmas which allow us to efficiently control the particle correlations as time tends to infinity. We thus manage to show that if the underlying motion has a well-defined Doob's $h$-transform in $L^2$ whose distribution is (in a sense) of regular variation as $t$ tends to infinity, then MNEM converges in $L^2$ if and only if an associated additive martingale is bounded in $\L^2$ and, furthermore, its limit coincides with that of the additive martingale. We shall wait until Section \ref{sec:results} to explain in which sense we understand this regular variation, but it will be clear once we do that all $\lambda$-positive systems have this property. However, the crucial point is that it is also satisfied in systems which are \textit{not} $\lambda$-positive, which allows us to establish convergence results beyond the scope of the previous works mentioned above. 
 
Then, we focus on investigating conditions which guarantee that the MNEM limit is \textit{always} strictly positive on the event of non-extinction (notice that this condition is stronger than just plain non-degeneracy of the limit).
Our main result in this respect is that this condition is equivalent to the property that, conditionally on non-extinction of the process, particles \textit{never} accumulate on the boundary of the state space, a property we shall call \textit{strong supercriticality} of the system.
In fact, we will show a more general result which states that, whenever MNEM converges in $L^2$ (plus other mild assumptions on the underlying motion), strong supercriticality is equivalent to the moment generating operator of the branching process having exactly two fixed points, a fact which readily implies the strict positivity of the MNEM limit on the event of non-extinction and, hence, from where convergence of EM then follows. 
This notion of strong supercriticality is in a way related to the concept of \textit{strong local survival} studied in \cite{bertacchi2014} and references mentioned therein, although we are not aware of any previous connections made between this and the convergence of both MNEM and EM.

Afterwards, we study the almost sure convergence of MNEM and EM in the $\lambda$-positive scenario. Building on techniques introduced in \cite{asmussen1976} (and later generalized in \cite{englander2010,ChenShio2007,chen2016}), we show that there is almost sure convergence provided that the $h$-transform of the original process (which is positive recurrent) admits a suitable Lyapunov functional. This is, in many situations, a more convenient condition for almost sure convergence than the ones in recent previous works, namely \cite{englander2010,ChenShio2007,chen2016}. For instance, for unbounded state spaces our condition will typically be considerably weaker than those in \cite{ChenShio2007,chen2016}, whereas finding a good Lyapunov functional will often be more direct than checking the assumptions in \cite{englander2010}. Indeed, we will see in the examples of Section \ref{sec:examples} that, for systems with absorption, a good choice for  Lyapunov functional is given by the $h$ in Doob's transform, so that in many cases one already has a natural candidate. See Section \ref{sec:pr} for further discussion.   
To conclude, we illustrate our results for a variety of different motions: ergodic motions without absorption, $\lambda$-positive systems either transient or with absorption, and finally the charismatic case of the Brownian motion with negative drift killed at $0$ which, as discussed above, is not $\lambda$-positive.
Despite what Kesten's result of almost sure convergence in \cite{kesten1978} might suggest, we show that convergence in $L^2$ of MNEM does not take place in the entire supercritical region $r > \frac{c^2}{2}$, but only if $r > c^2$. As a consequence, we obtain also the convergence of the EM in probability on this smaller region, thus giving a partial proof to Kesten's claim. To the best of our knowledge, this constitutes the first available proof for a result of this kind outside the $\lambda$-positive setting \mbox{(for branching} Markov processes, that is, since for superprocesses we have \cite{englander2009law}).

The rest of the article is structured as follows.
In Section \ref{sec:results} we first describe the basic setup and notation, and then state our main results. 
A detailed comparison with previous results is given in Subsection \ref{sec:pr}.
Section \ref{sec:examples} introduces several examples of application of our results.
\mbox{In Section \ref{sec:mtf}} we recall the many-to-few lemma, which constitutes one of the main tools of our analysis, while
Sections \ref{sec:proof2}, \ref{sec:proof}, \ref{sec:theo3} and \ref{sec:rpos} are devoted to the proofs of the main results. Later in Section \ref{sec:examples2}, we verify our assumptions for the examples of Section \ref{sec:examples}. Finally, Section 10 includes some final remarks and discusses future work plans.

\section{Preliminaries and main results}\label{sec:results}

\subsection{Preliminaries}

Let $X=(X_t)_{t \geq 0}$ be a homogeneous Markov process with cadlag trajectories on some metric space $\overline{J}$. We will assume throughout that:
\begin{enumerate}
	\item [$\bullet$] $X$ is allowed to have \textit{absorbing states}, i.e. $x \in \overline{J}$ such that, whenever $X_t = x$ for some $t$, one has $X_{s}=x$ for all times $s>t$.
	\item [$\bullet$] The set $\partial_* \overline{J}$ of all absorbing states of $X$ belongs to $\B$, the Borel $\sigma$-algebra of $\overline{J}$.
	\item [$\bullet$] $J:=\overline{J}-\partial_* \overline{J}$ is locally compact and separable. 
\end{enumerate}  
Now, consider the following branching dynamics:
\begin{enumerate}
	\item [i.] The dynamics starts with a single particle, located initially at some $x \in \overline{J}$, whose position evolves randomly according to $\mathcal{L}$, the infinitesimal generator of $X$. 
	\item [ii.] This initial particle branches at rate $r > 0$, dying and being replaced at its current position by an independent random number of particles $m \in \N_0$.
	\item [iii.] Starting from their birth position, now each of these new particles independently mimics the same stochastic behavior of its parent.
	\item [iv.] If a particle has $0$ children, then it dies and moves to a graveyard state $\Delta$ forever.
\end{enumerate}
Given any time $t \geq 0$, for each particle $u$ present in the dynamics at time $t$ we write $u_t$ to indicate its position at time $t$. Also, we let $\overline{\chi}_t$ denote the collection of particles in the branching dynamics which are alive at time $t$, i.e. $u_t \notin \Delta$. We identify $\overline{\chi}_t$ with a finite measure $\chi_t$ on $(\overline{J},\B)$ by setting 
$$
\chi_t := \sum_{u \in \overline{\chi}_t} \delta_{u_t}.
$$ Furthermore, let $\overline{\xi}_t$ denote the collection of particles $u$ in $\overline{\chi}_t$ which have not been absorbed yet, i.e. such that $u_t \in J$, and define its induced measure $\xi_t$ on $(J,\B_J)$ as
$$
\xi_t = \sum_{u \in \overline{\xi}_t} \delta_{u_t}.
$$ Finally, we write $|\xi_t|:=\xi_t(J)$ for the total mass of $\xi_t$, i.e. the number of living particles at time $t$ which have not been absorbed yet, and define the \textit{empirical measure} $\nu_t$ as
$$
\nu_t := \frac{1}{|\xi_t|} \cdot \xi_t 
$$ with the convention that $\infty \cdot 0 = 0$, used whenever $|\xi_t|= 0$.

Throughout the rest of the article we will use the subscript $x$ in the notation, e.g. in $P_x$ or $\E_x$, to indicate that the process involved in the corresponding probability or \mbox{expectation starts at $x$.} Similarly, the superscript $x$, e.g. in $\xi_t^{(x)}$ or $X_t^{(x)}$, shall indicate that the corresponding process starts at $x$. In order to avoid overloading the notation, we shall write $\E_x(|\xi_t|)$ instead of $\E_x(|\xi^{(x)}_t|)$ and similar simplifications whenever possible.

\subsection{The Kesten-Stigum theorem in $L^2$ 
	} \label{sec:main1}

We shall now present and discuss our main results. 
Before we can do so, however, we must introduce some assumptions on our branching dynamics. Our initial assumptions on the underlying motion $X$ are the following. 



 
\begin{assump} \label{assumpG} $\,$
	\begin{enumerate}
		\item [A1.] There exists $\lambda \geq 0$ and a nonnegative $\B$-measurable function $h:\overline{J} \rightarrow \R_{\geq 0}$ such that:
		\begin{itemize}
			\item [i.] $h(x)=0$ if and only if $x \in \partial_* \overline{J}$.
			\item [ii.] For every $x \in J$ the process $M^{(x)}=(M^{(x)}_t)_{t \geq 0}$ given by the formula
			$$
			M_t^{(x)} := \frac{h(X_t^{(x)})}{h(x)}e^{\lambda t},
			$$ is a (mean-one) square-integrable martingale, i.e. $-\lambda$ is an eigenvalue of the generator $\mathcal{L}$ with associated (right) eigenfunction $h$ satisfying $\E_x(h^2(X_t)) < +\infty$ for all $t \geq 0$.
		\end{itemize}
		
		\item [A2.] There exists a nonempty class of subsets $\mathcal{C}_X \subseteq \B_J$ such that for each $x \in \overline{J}$ and $B \in \mathcal{C}_X$ one has the asymptotic formula
		\begin{equation}\label{A2}
		P_x( X_t \in B ) = h(x)p(t)e^{-\lambda t}(\nu(B) + s_B(x,t)),
		\end{equation} for all $t > 0$, where $\lambda$ and $h$ are those from (A1) and:
		\begin{itemize}
			\item [i.] $\nu$ is a (non-necessarily finite) measure on $(J,\B_J)$ satisfying:
			\begin{enumerate}
				\item [$\bullet$] $\nu(B) \in [0,+\infty)$ for all $B \in \mathcal{C}_X$,
				\item [$\bullet$] there exists at least one $B' \in \mathcal{C}_X$ such that $\nu(B')>0$.
			\end{enumerate}
			\item [ii.] $p(t)$ is a regularly varying function at infinity, i.e. a function $p: (0,+\infty) \rightarrow (0,+\infty)$ such that the limit
			$$
			\ell(a):=\lim_{t \rightarrow +\infty} \frac{p(at)}{p(t)} 
			$$ exists and is finite for all $a > 0$. 
			\item [iii.] $s_B(\cdot,t)$ converges to zero as $t \rightarrow +\infty$ uniformly over 
			$J_n:=\{x \in J : \frac{1}{n}\leq h(x) \leq n\}$ for each $n \in \N$.
			
			\item [iv.] There exist $t_0,\overline{s}_B > 0$ such that $\sup_{x \in \overline{J}}s_B(x,t) \leq \overline{s}_B$ for all $t > t_0$.
			\end{itemize} 
	\end{enumerate}
\end{assump}

\begin{remark} Let us observe that if for $x \in J$ we consider the martingale change of measure $\tilde{P}_x$ (also known as Doob's $h$-transform) given by 
	\begin{equation} \label{eq:mcof}
	\frac{d\tilde{P}_x}{dP_x}\bigg|_{\F_t} = M_t^{(x)},
	\end{equation} where $(\F_t)_{t \geq 0}$ denotes the filtration generated by $X^{(x)}$, and also define the measure $\mu$ on $(J,\B_J)$ via the formula
	\begin{equation} \label{eq:defmu}
	\frac{d\mu}{d\nu}=h,
	\end{equation}then 
	\begin{equation}
	\label{eq:fasymp}
	P_x( X_t \in B) = \E_x (\mathbbm{1}_{B}(X_t)) = h(x)e^{-\lambda t} \tilde{\E}_x\left( \frac{\mathbbm{1}_B}{h}(X_t)\right) = h(x)p(t)e^{-\lambda t}\left( \nu(B) + s_B(x,t)\right)
	\end{equation} where $\tilde{\E}_x$ denotes expectation with respect to the measure $\tilde{P}_x$ and $s_B$ is given by 
$$
s_B(x,t):= \frac{1}{p(t)}\tilde{\E}_x\left( \frac{\mathbbm{1}_B}{h}(X_t)\right) - \nu(B) = \frac{1}{p(t)}\tilde{\E}_x\left( \frac{\mathbbm{1}_B}{h}(X_t)\right) - \mu\left(\frac{\mathbbm{1}_B}{h}\right).
$$ Thus, (A2) can be reformulated as the assumption that there exists a regularly varying function $p$ at infinity such that the limit
\begin{equation}
\label{eq:plimit}
\lim_{t \rightarrow +\infty} \frac{1}{p(t)}\tilde{\E}_x\left( \frac{\mathbbm{1}_B}{h}(X_t)\right) 
\end{equation} exists for all $B \in \C_X$, is given by the non-trivial measure $\mu$ and, moreover, \mbox{is uniform over each $J_n$.} In particular, it follows that, for any $B \in \mathcal{C}_X$ such that $\nu(B) > 0$, the function 
$$
\ell^{(x)}_B(t):=\tilde{\E}_x\left( \frac{\mathbbm{1}_B}{h}(X_t)\right)
$$ is of regular variation at infinity. In conclusion, we may regard (A2) as requiring that, under $\tilde{P}_x$, the distribution of $X^{(x)}$ be (in some sense) of regular variation at infinity uniformly over each $J_n$.    
\end{remark}
	
It is worth mentioning that Assumptions \ref{assumpG} are satisfied in many different situations. Indeed, as we will see in Section \ref{sec:rpos1} below, $\lambda$-positive processes (see Section \ref{sec:rpos1} for their precise definition) are natural examples of systems verifying these conditions and this already covers a wide range of possibilities: not only are ergodic processes in this category, but also certain transient systems can be $\lambda$-positive as well as many examples of almost-surely absorbed motions having $\nu$ as their Yaglom limit. Moreover, Assumptions \ref{assumpG} are also satisfied by processes which are not $\lambda$-positive, see for instance \cite{watanabe1967,kesten1978} and Section \ref{sec:bbm} for the cases of a zero and negative drift, respectively, Brownian motion killed at $0$. We also refer to \cite{KP1} for other examples of L\'evy processes satisfying these assumptions. 

On the other hand, we point out that the class $\mathcal{C}_X$ was introduced in Assumptions \ref{assumpG} because, in general, one cannot expect the asymptotics in \eqref{A2} to be valid for \textit{every} $B \in \B_J$, see \mbox{Section \ref{sec:tou}}. Nevertheless, even if this is not the case one can still produce convergence results which hold for all $B$ in this smaller class $\mathcal{C}_X$. Finally, we observe that in all the examples of Section \ref{sec:examples} the measure $\nu$ actually corresponds to the left eigenmeasure of the generator $\mathcal{L}$ associated to $-\lambda$. However, this fact will not be used throughout our analysis. 

Now, since we are interested in understanding the evolution in $L^2$ of the branching dynamics $\xi$ in the supercritical case in which $|\xi_t|$ remains positive for all times $t > 0$ with positive probability, we must also make the following assumptions on $m$ and $r$.

\begin{assump}\label{assumpG0} We shall assume throughout that the pair $(m,r)$ satisfies:	\begin{itemize}
		\item [I1.] $m_2:= \E(m^2) < +\infty$ and $m_1:= \E(m) > 1$.
		\item [I2.] $r(m_1-1) > \lambda$, where $\lambda$ is the parameter from Assumptions \ref{assumpG}.
	\end{itemize}
\end{assump} It follows from Assumptions \ref{assumpG0} and Lemmas \ref{lema:mt1}-\ref{lema:mt2} below that all second moments $\E_x(|\xi_t|^2)$ are well-defined for any $x \in J$ and $t \geq 0$, and also $\E_x(|\xi_t|) \rightarrow +\infty$ as $t \rightarrow +\infty$ for all $x$ as long as there exists at least one $B \in \mathcal{C}_X$ with $\nu(B)> 0$, which is precisely the case that interests us. 

Now, our first result is concerned with the $L^2$-convergence of the so-called \textit{Malthusian martingale} associated to our branching dynamics, which we define below.

\begin{definition} For any $x \in J$ we define the \textit{Malthusian martingale} $D^{(x)}=(D_t^{(x)})_{t \geq 0}$ as
	$$
	D_t^{(x)} := \frac{1}{h(x)}\sum_{u \in \overline{\xi}^{(x)}_t} h(u_t) e^{-(r(m_1-1)-\lambda)t}.
	$$
\end{definition} 
It follows from the many-to-one lemma in Section \ref{sec:mtf} and (A1) that $D^{(x)}$ is indeed a martingale. Furthermore, (A1) implies in fact that $D^{(x)}$ is square-integrable. Being also nonnegative, we know that there exists an almost sure limit $D^{(x)}_\infty$. Our first result is then the following.

\begin{theorem}\label{theo:main2} For every $x \in J$ we have that  
	$$
	\lim_{t \rightarrow +\infty} \E_x(D_t^2) = (m_2-m_1) \int_0^\infty \E_x(M_s^2) re^{-r(m_1-1)s}ds=:\Phi_x,
	$$ so that $D^{(x)}$ converges in $L^2$ to $D^{(x)}_\infty$ if and only if $\Phi_x < +\infty$. In this case, we have that
	$$
	\E_x(D_\infty)=1 \hspace{2cm}\text{ and }\hspace{2cm}\E_x(D_\infty^2)=\Phi_x.
	$$	
\end{theorem}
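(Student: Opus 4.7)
The plan is to derive a closed-form expression for $\E_x(D_t^2)$ by means of the many-to-two lemma of Section \ref{sec:mtf}, and then to pass to the limit by an elementary monotonicity argument.

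First, since $h$ vanishes on $\partial_*\overline{J}$, one may rewrite
$$D_t^2 \;=\; \frac{e^{-2(r(m_1-1)-\lambda)t}}{h(x)^2}\left(\sum_{u\in \overline{\xi}_t} h(u_t)^2 \;+\; \sum_{u\ne v} h(u_t)\,h(v_t)\right).$$
The diagonal piece is handled by the many-to-one lemma and the off-diagonal by the many-to-two lemma. In the latter, conditioning on the time $s\in(0,t)$ and position $X_s$ of the most recent common ancestor of an ordered pair of distinct descendants produces the factor $r(m_2-m_1)$ (the mean number of ordered pairs of distinct children at a branching event), together with two conditionally independent copies of $X$ on $[s,t]$ starting from $X_s$. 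Substituting the identity $h(X_t)=h(x)\,e^{-\lambda t}M_t^{(x)}$ from (A1) in every occurrence and carefully collecting the exponentials, one obtains, after a direct computation,
\begin{equation}\label{eq:planDtsq}
\E_x(D_t^2) \;=\; e^{-r(m_1-1)t}\,\E_x(M_t^2) \;+\; r(m_2-m_1)\int_0^t e^{-r(m_1-1)s}\,\E_x(M_s^2)\,ds.
\end{equation}

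Second, I would take $t\to+\infty$ in \eqref{eq:planDtsq}. The integral term has a nonnegative integrand and is monotone in $t$, so it increases to $\Phi_x\in[0,+\infty]$. The boundary term $e^{-r(m_1-1)t}\E_x(M_t^2)$ is controlled by exploiting the fact that $s\mapsto \E_x(M_s^2)$ is nondecreasing, because $M^{(x)}$ is a square-integrable martingale and $(M^{(x)})^2$ is thus a nonnegative submartingale. Comparing $\E_x(M_t^2)$ with $\E_x(M_s^2)$ for $s\in[t,2t]$ yields, for all $t$ large enough,
$$e^{-r(m_1-1)t}\,\E_x(M_t^2) \;\le\; \frac{r(m_1-1)}{1-e^{-r(m_1-1)t}}\int_t^{2t} e^{-r(m_1-1)s}\,\E_x(M_s^2)\,ds,$$
which is bounded by a constant multiple of the tail $\int_t^\infty e^{-r(m_1-1)s}\E_x(M_s^2)\,ds$. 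If $\Phi_x<+\infty$ this tail vanishes; if $\Phi_x=+\infty$ the integral term alone already forces $\E_x(D_t^2)\to+\infty$. Either way, $\E_x(D_t^2)\to\Phi_x$.

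Finally, since $D^{(x)}$ is a nonnegative $L^2$-martingale, $t\mapsto\E_x(D_t^2)$ is nondecreasing, so $\sup_t\E_x(D_t^2)=\Phi_x$. Therefore $D^{(x)}$ is $L^2$-bounded if and only if $\Phi_x<+\infty$, in which case it converges in $L^2$ (and almost surely) to some $D^{(x)}_\infty$; the identities $\E_x(D_\infty)=1$ and $\E_x(D_\infty^2)=\Phi_x$ then follow from $L^1$- and $L^2$-norm convergence, respectively. The delicate step is really the derivation of \eqref{eq:planDtsq}: the many-to-two computation must be organized so that the exponential prefactors collapse exactly to $e^{-r(m_1-1)s}$ inside the integral. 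Everything afterwards is elementary once the submartingale monotonicity of $s\mapsto\E_x(M_s^2)$ is in hand.
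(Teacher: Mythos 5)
Your proof is correct and follows essentially the same route as the paper: the same many-to-two decomposition yields the same explicit formula for $\E_x(D_t^2)$, and the limit is extracted by monotonicity. The only (minor) variation is in disposing of the boundary term $e^{-r(m_1-1)t}\E_x(M_t^2)$ — you bound it directly by a tail of the integral using the submartingale monotonicity of $s\mapsto\E_x(M_s^2)$, whereas the paper observes that it has liminf zero when $\Phi_x<+\infty$ and then invokes the monotonicity of $t\mapsto\E_x(D_t^2)$ (as $(D^{(x)})^2$ is a submartingale) to turn the liminf into a limit; both arguments are valid.
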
 

We should point out that the $\Phi_x < +\infty$ condition is not trivial under Assumptions \ref{assumpG}, so that $L^2$-convergence may not always hold, see e.g. Section \ref{sec:bbm}. However, the $L^2$-convergence of $D^{(x)}$ is crucial as it dictates the validity of a Kesten-Stigum type result in $L^2$ for our branching dynamics, as our next result shows.

\begin{theorem}\label{theo:main}
	If for $x \in J$ and $B,B' \in \mathcal{C}_X$ with $\nu(B') > 0$ we define  $W^{(x)}(B,B')=(W^{(x)}_t(B,B'))_{t \geq 0}$ by the formula
	$$
W^{(x)}_t(B,B') := \frac{\xi^{(x)}_t(B)}{\E_x(\xi_t(B'))}
$$ (which is well-defined for $t$ large enough since $\liminf_{t \rightarrow +\infty} \E_x(\xi_t(B')) > 0$ by Lemma \ref{lema:mt1} and \eqref{A2}), then the following holds:
\begin{enumerate}
	\item [i.] The sequence $W^{(x)}(B,B')$ satisfies 
	$$
	\lim_{t \rightarrow +\infty} \E_x(W^2_t(B,B'))= \left[\frac{\nu(B)}{\nu(B')}\right]^2 \Phi_x.
	$$ In particular, it is bounded in $L^2$ if and only if $\Phi_x < +\infty$. 
	\item [ii.] If $W^{(x)}(B,B')$ is bounded in $L^2$ then we have that as $t \rightarrow +\infty$
	\begin{equation} \label{eq:conv1}
	W^{(x)}_t(B,B') \overset{L^2}{\longrightarrow} \frac{\nu(B)}{\nu(B')} \cdot D_\infty^{(x)}.
    \end{equation}
    In particular, conditionally on the event $\{D^{(x)}_\infty > 0\}$, we have that as $t \rightarrow +\infty$ 
    \begin{equation} \label{eq:ks2}
    \nu^{(x)}_t(B,B'):=\frac{\xi^{(x)}_t(B)}{\xi^{(x)}_t(B')} \overset{P}{\longrightarrow} \frac{\nu(B)}{\nu(B')}.
    \end{equation}
 \end{enumerate} 
       
\end{theorem}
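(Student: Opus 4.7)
The plan is to establish both parts by computing the second moment $\E_x(W_t^2(B,B'))$ and the cross-moment $\E_x(W_t(B,B')\,D_t^{(x)})$ via the many-to-one and many-to-two lemmas of Section \ref{sec:mtf}, then combining with Theorem \ref{theo:main2}. Setting $c:=\nu(B)/\nu(B')$, I aim to verify $\E_x(W_t^2)\to c^2\Phi_x$ and $\E_x(W_tD_t)\to c\Phi_x$; when $\Phi_x<\infty$, the identity
$$
\E_x\bigl[(W_t(B,B')-cD_t^{(x)})^2\bigr]=\E_x(W_t^2)-2c\,\E_x(W_tD_t)+c^2\E_x(D_t^2)\longrightarrow 0
$$
together with Theorem \ref{theo:main2} ($D_t^{(x)}\to D_\infty^{(x)}$ in $L^2$) yields $W_t(B,B')\to cD_\infty^{(x)}$ in $L^2$.

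For part (i), expand $\xi_t(B)^2=\sum_u\mathbbm{1}_B(u_t)+\sum_{u\neq v}\mathbbm{1}_B(u_t)\mathbbm{1}_B(v_t)$. The many-to-one lemma yields $e^{r(m_1-1)t}P_x(X_t\in B)$ for the diagonal sum, which, divided by $[\E_x(\xi_t(B'))]^2$, vanishes as $t\to\infty$ by (I2). The many-to-two lemma writes the off-diagonal sum as $r(m_2-m_1)\int_0^t e^{r(m_1-1)(2t-s)}\E_x[(P_{t-s}\mathbbm{1}_B)^2(X_s)]\,ds$. Substituting the asymptotic (A2) for both $\E_x(\xi_t(B'))$ and $P_{t-s}\mathbbm{1}_B(y)=h(y)p(t-s)e^{-\lambda(t-s)}(\nu(B)+s_B(y,t-s))$ reduces the ratio to
$$
\frac{r(m_2-m_1)}{h^2(x)(\nu(B')+s_{B'}(x,t))^2}\int_0^t e^{(2\lambda-r(m_1-1))s}\!\left(\frac{p(t-s)}{p(t)}\right)^2\!\E_x\!\left[h^2(X_s)(\nu(B)+s_B(X_s,t-s))^2\right]ds.
$$
For each fixed $s$, regular variation gives $p(t-s)/p(t)\to\ell(1)=1$, and $s_B(X_s,t-s)\to 0$ a.s.\ by (A2)(iii) (using $X_s\in\bigcup_nJ_n$ a.s., since $h(x)>0$). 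Dominated convergence then produces the limit $\nu(B)^2\int_0^\infty e^{(2\lambda-r(m_1-1))s}\E_x[h^2(X_s)]\,ds$, and substituting the identity $\E_x(M_s^2)=h^{-2}(x)e^{2\lambda s}\E_x[h^2(X_s)]$ from (A1) recasts this as exactly $c^2\Phi_x$; when $\Phi_x=\infty$, Fatou's lemma on the same nonnegative integrand gives $\liminf_t\E_x(W_t^2)\geq c^2\Phi_x=\infty$.

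For part (ii), the cross-moment is handled identically after expanding $\xi_t(B)\sum_u h(u_t)=\sum_u\mathbbm{1}_B(u_t)h(u_t)+\sum_{u\neq v}\mathbbm{1}_B(u_t)h(v_t)$. The key simplification is $P_{t-s}h(y)=h(y)e^{-\lambda(t-s)}$ by the martingale property in (A1), which combined with (A2) for $P_{t-s}\mathbbm{1}_B$ yields (via the same dominated-convergence step) $\E_x(W_tD_t)\to c\Phi_x$, proving the $L^2$ convergence \eqref{eq:conv1}. For the convergence in probability of the ratio, write $\nu_t^{(x)}(B,B')=W_t(B,B')/W_t(B',B')$ (on $\{\xi_t(B')>0\}$); applying the just-proved $L^2$ convergence to both factors gives $W_t(B,B')\to cD_\infty^{(x)}$ and $W_t(B',B')\to D_\infty^{(x)}$ in $L^2$, whence the continuous mapping theorem delivers \eqref{eq:ks2} on $\{D_\infty^{(x)}>0\}$.

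The main technical obstacle is justifying dominated convergence for the integral above, since $(p(t-s)/p(t))^2$ is only pointwise convergent and need not be uniformly bounded over $s\in[0,t]$. The plan is to split $\int_0^t=\int_0^{t-t_0}+\int_{t-t_0}^t$: on $[0,t-t_0]$ the bound (A2)(iv) controls $s_B(X_s,t-s)\leq\overline{s}_B$, while Potter's inequality dominates $(p(t-s)/p(t))^2$ by a power of $t/(t-s)$ that is absorbed by the exponential factor $e^{(2\lambda-r(m_1-1))s}$ (when $\Phi_x<\infty$); the integral over the bounded-length window $[t-t_0,t]$ is handled by the crude bound $(P_{t-s}\mathbbm{1}_B)^2\leq 1$, producing a contribution of order $e^{-(r(m_1-1)-2\lambda)t}/p^2(t)$ relative to $[\E_x(\xi_t(B'))]^2$ that is negligible under (I2). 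This same split works in the analogous computation of $\E_x(W_tD_t)$.
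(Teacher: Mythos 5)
Your high-level strategy (compute $\E_x(W_t^2)$ and $\E_x(W_tD_t)$ via many-to-one/two, show convergence to $c^2\Phi_x$ and $c\Phi_x$, then expand the $L^2$-norm of $W_t - cD_t$ and invoke Theorem \ref{theo:main2}) is exactly the paper's strategy. However, the technical justification of the limit of the branching integral has two genuine gaps that are not merely cosmetic.

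First, the tail window $[t-t_0,t]$. You estimate it with the crude bound $(P_{t-s}\mathbbm{1}_B)^2\leq 1$, producing a contribution of order $e^{(2\lambda - r(m_1-1))t}/p^2(t)$ and claiming this is negligible under (I2). But (I2) only gives $r(m_1-1)>\lambda$, not $r(m_1-1)>2\lambda$; whenever $\lambda < r(m_1-1)\leq 2\lambda$ (e.g.\ any of the $\lambda$-positive examples with $\Phi_x<\infty$ but $r(m_1-1)$ close to $\lambda$), your bound blows up exponentially. The paper avoids this by using the sharper Cauchy--Schwarz/Markov inequality $\E_x[P^2_{X_s}(X_{t-s}\in B)]\leq \E_x[P_{X_s}(X_{t-s}\in B)]=P_x(X_t\in B)$, which when divided by $P_x^2(X_t\in B')$ and combined with the subexponentiality of $p$ yields a contribution of order $e^{-(r(m_1-1)-\lambda)(1-o(1))t}$, negligible under (I2) alone.

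Second, the dominated convergence on $[0,t-t_0]$. You invoke Potter's inequality to dominate $(p(t-s)/p(t))^2$ by a power of $t/(t-s)$, and on $[0,t-t_0]$ this gives $(1+s/t_0)^{2|\alpha-\delta|}$ with $\alpha$ the regular-variation index. But this factor is a fixed-degree polynomial in $s$, and the hypothesis $\Phi_x<\infty$ only guarantees $\int_0^\infty \E_x(M_s^2)e^{-r(m_1-1)s}\,ds<\infty$; it does \emph{not} guarantee that $(1+s)^{\beta}\E_x(M_s^2)e^{-r(m_1-1)s}$ is integrable for a fixed $\beta>0$ (e.g.\ if $\E_x(M_s^2)e^{-r(m_1-1)s}\sim 1/(s^2\log^2 s)$). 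So the claimed dominating function need not be integrable, and the DCT does not apply directly on your two-way split. The paper circumvents this with a three-way split $[0,T]\cup[T,\alpha t]\cup[\alpha t,t]$ for $\alpha<1$ fixed: on $[T,\alpha t]$, Lemma \ref{lema:p}(ii) gives a uniform bound $p(t-s)/p(t)\leq 2q_{\alpha/(1-\alpha)}$ (the ratio $s/(t-s)$ stays in a compact set), so the contribution is controlled by the \emph{tail} $\int_T^\infty \E_x(M_s^2)re^{-r(m_1-1)s}\,ds$, made small by taking $T$ large; the main limit is then done on the bounded window $[0,T]$. Your argument would go through if you split at $\alpha t$ rather than $t-t_0$ and invoked tail-smallness of $\Phi_x$ for the piece $[T,\alpha t]$, but as written the plan would fail for regularly varying $p$ of nonzero (in particular negative) index without an extra integrability hypothesis beyond $\Phi_x<\infty$.
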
 

We note that, due to the presence of absorption, $W^{(x)}(B,B')$ will not be a martingale in general, so that the existence of an $L^2$-limit when bounded is by no means a trivial statement. However, it follows from Theorem \ref{theo:main} that it behaves asymptotically in $L^2$ as $\frac{\nu(B)}{\nu(B')}\cdot D^{(x)}$ which is a martingale, so that boundedness should still be enough to guarantee convergence (at it is indeed the case).

\subsection{Strict positivity of $D^{(x)}_\infty$ on the event of non-extinction}\label{sec:strict}

Observe that for every $x \in J$ the event $\Lambda^{(x)}:=\{D^{(x)}_\infty > 0\}$ is contained in the event of non-extinction \mbox{$\Theta^{(x)}:=\{ |\xi_t| > 0 \text{ for all }t\}$.} Ideally, we would like both events to be almost surely equal, i.e. 
\begin{equation}\label{eq:eq}
P_x( \Lambda | \Theta )=1,
\end{equation} since, in that case, Theorem \ref{theo:main} would tell us that, almost surely on the event of non-extinction, for any $B \in \mathcal{C}_X$ with $\nu(B) > 0$ the number of particles $\xi^{(x)}_t(B)$ grows like its expectation which, using the results from Section \ref{sec:mtf}, can be explicitly computed and, furthermore, that these particles distribute themselves according to $\nu$. The following result intends to give conditions under which the equality in \eqref{eq:eq} is guaranteed. It is based on the study of the moment generating operator associated to the branching dynamics, which we define now.

\begin{definition} \label{def:defG}
	Let $\mathfrak{B}$ denote the class of measurable functions $g: (J,\B_J) \to ([0,1],\B_{[0,1]})$, where $\B_{[0,1]}$ denotes the Borel $\sigma$-algebra on $[0,1]$. We define the \textit{moment generating operator} $G_T: \mathfrak{B} \to \mathfrak{B}$ by the formula
	$$
	G(g)(x):=\E_x \left( \prod_{u \in \overline{\xi}_1} g(u_1)\right)
	$$ with the convention that $\prod_{u \in \emptyset} = 1$, used whenever $|\xi_1| = 0$. 
\end{definition}

It is immediate to see that $\mathbf{1}$, the function constantly equal to one on $J$, is a fixed \mbox{point of $G$,} i.e. $G(\mathbf{1})=\mathbf{1}$. Furthermore, by the branching property of the dynamics one has that the functions 
\begin{equation}\label{eq:defeta}
\eta(x):= P_x( \Theta^c) \hspace{1cm}\text{ and }\hspace{1cm}\sigma(x):=P_x(D_\infty=0)
\end{equation} are also fixed points of $G$, see Proposition \ref{prop:fp} below.\footnote{The fact that both $\eta$ and $\sigma$ are indeed measurable holds if one assumes that the process $X^{(x)}$ is also measurable as a function of $x$ in a suitable manner.} Since clearly $\eta \leq \sigma$ and we also have $\sigma \neq \mathbf{1}$ since $\E_x(D_\infty)=1$ for all $x \in J$ by Theorem \ref{theo:main}, if we show that $G$ has at most two fixed points then this would imply that $\eta \equiv \sigma$ and so \eqref{eq:eq} would follow at once. Unfortunately, it is not always the case that $G$ has only two fixed points, see Section \ref{sec:tou} for a counterexample. Hence, we must impose some additional conditions for this to occur. First, we make some further assumptions.

\begin{assump}\label{assumpG2} Throughout Subsection \ref{sec:strict} we will make the following additional assumptions:
	\begin{enumerate}
	\item [B1.] $\Phi_x < +\infty$ for all $x \in J$.
	\item [B2.] For any $B \in \B_J$ with $\nu(B)>0$ there exists $B^* \in \mathcal{C}_X$ such that $B^* \subseteq B$ and $\nu(B^*)>0$.
	\item [B3.] The conditioned evolution of $X$ is \textit{irreducible}, i.e. for any pair $x \neq x' \in J$ and $B \in \B_J$ there exists $n=n(x,x',B) \in \N$ such that 
	$$
	 P_{x'}(X_1 \in B)> 0 \Longrightarrow P_{x}(X_{n+1} \in B) > 0.
	$$
	\end{enumerate}
\end{assump}

Assumption (B1) is not really restrictive, as we wish to focus here only on the case in which there is convergence in $L^2$. On the other hand, we impose (B2) in order to obtain an appropriate control on the growth of $\xi_t(B)$, namely that for each $n \in \N$ and $x \in J$
$$
\lim_{t \rightarrow +\infty} \left[\inf_{y \in J_n}\E_x(\xi_t(B)) \right]=+\infty \hspace{1cm}\text{ and }\hspace{1cm}\lim_{t \rightarrow +\infty} \xi^{(x)}_t(B) = +\infty \hspace{0.2cm}\text{ on }\{D^{(x)}_\infty > 0\},
$$ which follows from (B1-B2) by Theorem \ref{theo:main} (see Section \ref{sec:theo3} for details). We will see in Section \ref{sec:examples2} that (B2) is typically very easy to check. Finally, the notion of irreducibility in (B3) is different than that of $\psi$-irreducibility featured in \cite{meyn1993} and weaker than the standard definition of irreducibility when $J$ is countable. Although not entirely standard, it is nevertheless the notion which appears naturally in our analysis and it is satisfied in all applications of interest, see Section \ref{sec:examples2}.

Next, we introduce the notion of strong supercriticality which plays a key role in what follows.


\begin{definition} \label{def:ss} We shall say that the branching dynamics $\xi^{(x)}=(\xi^{(x)}_t)_{t \geq 0}$ is \textit{strongly supercritical} if:
	\begin{enumerate}
		\item [i.] $\xi^{(x)}$ is supercritical, i.e. $P_x(\Theta) > 0$.
		\item [ii.] $\eta(x)=P_x(\Gamma)$, where $\Gamma^{(x)}$ is the event defined as 
		$$
		\Gamma^{(x)}:=\left\{ \lim_{t \rightarrow +\infty} \left[ \min_{u \in \overline{\xi}_t^{(x)}} \Phi_{u_t}\right]= +\infty\right\}
		$$ with the convention that $\min_{u \in \emptyset} \Phi_{u_t}:=+\infty$, used whenever $|\xi_t|=0$. 
    \end{enumerate} Note that, provided (i) holds, (ii) is equivalent to the condition $P_x(\Gamma|\Theta)=0$. 
\end{definition}

We will see that in all the examples of Section \ref{sec:examples} the mapping $x \mapsto \Phi_x$ is bounded over subsets of $J$ which are at a positive distance from $\partial_* \overline{J}$ and, on the other hand, that it tends to infinity as $x$ approaches $\partial_* \overline{J}$. Thus, one can interpret strong supercriticality as the condition which states that on the event of non-extinction particles never accumulate on the boundary of the state space, $\partial_* \overline{J}$.  
On the other hand, we will see in Section \ref{sec:theo3} that under Assumptions \ref{assumpG2} strong supercriticality is equivalent to having 
\begin{equation} \label{eq:sscom}
P_x\left( \limsup_{t \rightarrow +\infty} \xi_t(B) > 0\right)=P_x(\Theta) > 0
\end{equation} for every $x \in J$ and all $B \in \B$ with $\nu(B) > 0$, which is the analogue in our context of the notion of strong local survival studied in \cite{bertacchi2014} and other references therein. However, in general it will not be equivalent to the concept of (plain) local survival introduced in \cite{EngKyp2004}, which is said to take place whenever there exists a compact set $\mathcal{K} \subseteq J$ such that
		\begin{equation}\label{eq:ls}
		P_x\left( \limsup_{t \rightarrow +\infty} \xi_t(\mathcal{K}) > 0\right) > 0.
		\end{equation} See Section \ref{sec:tou} for further details.
		
Our next result states that strong supercriticality is a necessary and sufficient condition for $G$ to have exactly two fixed points whenever under Assumptions \ref{assumpG2}.

\begin{theorem}\label{theo:main3} If Assumptions \ref{assumpG2} also hold then the following statements are equivalent:
\begin{enumerate}
	\item [i.] $G$ has exactly two fixed points, $\eta$ and $\mathbf{1}$.
	\item [ii.] $\eta(x) = \sigma(x)$ for all $x \in J$.
	\item [iii.] $\eta(x) = \sigma(x)$ for some $x \in J$.
    \item [iv.] $\xi^{(x)}$ is strongly supercritical for some $x \in J$. 
    \item [v.] $\xi^{(x)}$ is strongly supercritical for all $x \in  J$. 
\end{enumerate} 
\end{theorem}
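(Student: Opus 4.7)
The plan is to establish all five equivalences by proving the chain (i)$\Rightarrow$(ii)$\Rightarrow$(iii)$\Rightarrow$(ii), together with (ii)$\Leftrightarrow$(v)$\Leftrightarrow$(iv) and (v)$\Rightarrow$(i). The easy directions are immediate: by Theorem \ref{theo:main2} and (B1), $\E_x(D_\infty)=1$ for every $x\in J$, so $\sigma(x)<1$ on $J$; since $\sigma$ is a fixed point of $G$ by Proposition \ref{prop:fp} and distinct from $\mathbf{1}$, condition (i) forces $\sigma\equiv\eta$, which is (ii). The implications (ii)$\Rightarrow$(iii) and (v)$\Rightarrow$(iv) are trivial. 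The reverse propagations (iii)$\Rightarrow$(ii) and (iv)$\Rightarrow$(v) will follow from the branching property combined with the irreducibility (B3): using the elementary inequality
\[
\prod_{u} a(u)-\prod_{u} b(u)\;\geq\;(a(u_*)-b(u_*))\prod_{v\neq u_*}b(v)
\]
for $[0,1]$-valued $a\geq b$ and any designated particle $u_*$, an equality between two $G$-fixed points at a single point propagates to every point the motion can reach in finitely many steps, which by (B3) is all of $J$.

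The crux of the proof is the equivalence (ii)$\Leftrightarrow$(v). The direction (ii)$\Rightarrow$(v) is accessible directly: since $\Phi_x<+\infty$ for all $x\in J$ by (B1), the family $\{\Phi\leq K\}\cap J$ exhausts $J$, so by the non-triviality of $\nu$ on $J$ one can choose $K_0$ with $\nu(\{\Phi\leq K_0\})>0$; (B2) then supplies $B^*\in\mathcal{C}_X$ with $B^*\subseteq\{\Phi\leq K_0\}$ and $\nu(B^*)>0$, and Theorem \ref{theo:main} combined with $\E_x\xi_t(B^*)\to+\infty$ (by many-to-one, (A2), and (I2)) yields $\xi_t(B^*)\to+\infty$ along a subsequence almost surely on $\{D_\infty>0\}$. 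Under (ii) the event $\{D_\infty>0\}$ agrees with $\Theta$ almost surely, so on $\Theta$ there is always a particle in $B^*\subseteq\{\Phi\leq K_0\}$ at infinitely many times, giving $\liminf_t\min_u\Phi_{u_t}\leq K_0<+\infty$ and hence $P_x(\Gamma\cap\Theta)=0$, which is (v).

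The main obstacle is the converse (v)$\Rightarrow$(ii). Heuristically, $\{D_\infty=0\}\cap\Theta$ is the event where the branching system survives but the weighted sum $\sum_u h(u_t)$ decays faster than $e^{(r(m_1-1)-\lambda)t}$, forcing the surviving particles to concentrate in regions of vanishing $h$ (equivalently, where $\Phi$ blows up), which is precisely what strong supercriticality excludes. To make this rigorous I plan to use the martingale change of measure $d\hat{P}_x/dP_x|_{\F_t}=D_t^{(x)}$: under $\hat{P}_x$ the dynamics admits a spinal decomposition whose immortal line has marginal law $\tilde{P}_x$, and by standard absolute-continuity arguments for nonnegative martingales the identity $P_x(\{D_\infty>0\}\cap\Theta)=P_x(\Theta)$ is equivalent to an integrability control for $D_\infty$ under $\hat{P}_x$ along the spine. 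Strong supercriticality (v), transferred to $\hat{P}_x$ via the many-to-one lemma, keeps the spine's $\Phi$-value bounded infinitely often and supplies exactly this control.

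Finally, (v)$\Rightarrow$(i) follows by a product-martingale argument using the already established (ii). Let $g$ be any fixed point of $G$ with $g\not\equiv\mathbf{1}$; then $P_t:=\prod_{u\in\overline{\xi}_t}g(u_t)$ is a $[0,1]$-valued bounded martingale converging almost surely to some $L$ with $g(x)=\E_x(L)$ and $L=1$ on $\Theta^c$. Iterating the fixed-point identity $g=G^n g$ together with many-to-one estimates, (A2), and (B3), one shows that the sub-level set $A_\epsilon:=\{g\leq 1-\epsilon\}$ must satisfy $\nu(A_\epsilon)>0$ for some $\epsilon>0$: otherwise the iterated asymptotics would force $g(x_0)=1$, contrary to hypothesis. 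Choosing by (B2) some $A^*\in\mathcal{C}_X$ with $A^*\subseteq A_\epsilon$ and $\nu(A^*)>0$, and invoking (ii) to obtain $\xi_t(A^*)\to+\infty$ on $\Theta$, one gets $P_t\leq(1-\epsilon)^{\xi_t(A^*)}\to 0$ on $\Theta$. Therefore $L=\mathbf{1}_{\Theta^c}$ almost surely, whence $g\equiv\eta$, proving that $\eta$ and $\mathbf{1}$ are the only fixed points of $G$.
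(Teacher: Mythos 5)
Your overall skeleton is sound and several easy steps (i)$\Rightarrow$(ii), (ii)$\Rightarrow$(iii), (v)$\Rightarrow$(iv) match the paper, as does the fixed-point propagation idea for (iii)$\Rightarrow$(ii) and (iv)$\Rightarrow$(v). Your direct argument for (ii)$\Rightarrow$(v), extracting $B^*\subseteq\{\Phi\le K_0\}$ with $\nu(B^*)>0$ via (B2) and feeding it to Theorem \ref{theo:main} to get $\limsup_t \xi_t(B^*)=+\infty$ on $\Theta$ (hence escape from $\Gamma$), is correct and is a clean standalone piece not present in the same form in the paper.

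The genuine gap is the step (v)$\Rightarrow$(ii), which you yourself flag as the crux and only sketch. You propose a spinal decomposition under $\hat P_x$ with Radon--Nikodym derivative $D_t^{(x)}$, claim that ``standard absolute-continuity arguments'' reduce $P_x(\{D_\infty>0\}\cap\Theta)=P_x(\Theta)$ to an integrability control along the spine, and that strong supercriticality transferred via many-to-one ``supplies exactly this control.'' None of this is carried out, and it is not a routine consequence: the standard spinal argument controls whether $D_\infty>0$ with probability one (i.e., $\sigma\equiv\eta$ when $\eta\equiv 0$), but in the presence of absorption one must separate $\{D_\infty=0\}\cap\Theta$ from $\{D_\infty=0\}\cap\Theta^c$, and it is not transparent how the spine's marginal behavior under $\tilde P_x$ is linked to the pathwise condition $\lim_t\min_{u}\Phi_{u_t}=+\infty$ appearing in Definition \ref{def:ss}. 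This matters doubly because your closure step (v)$\Rightarrow$(i) explicitly invokes (ii), so the gap propagates through the entire argument.

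By contrast, the paper never proves (v)$\Rightarrow$(ii) directly. It proves (v)$\Rightarrow$(i) without (ii), by reformulating strong supercriticality (Proposition \ref{prop:ss1}) as $P_x(\Theta)=P_x(\limsup_k\xi_k(\tilde J_n)>0)$ for some $n$, and then Lemma \ref{lemma:cru} — a coupling with an i.i.d.\ sequence of supercritical Galton--Watson processes built from the lower bound (LB) of Proposition \ref{prop:b1} — converts ``visits $\tilde J_n$ at arbitrarily large times'' into ``$\xi_{kT}(B)\to+\infty$,'' which forces $g(x)\le\liminf_n\E_x[(1-\varepsilon)^{\xi_n(B)}]\le\eta(x)$ for any fixed point $g\ne\mathbf 1$. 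Your proof is missing this Galton--Watson domination machinery (or an equivalent substitute), which is precisely what makes (v) usable without first establishing $\eta\equiv\sigma$. If you want to rescue your route, you either need to fully execute the spine argument (including the link between strong supercriticality and the spine's recurrence to $\{\Phi\le K\}$), or replace (v)$\Rightarrow$(i) with a direct proof not relying on (ii), along the lines of the coupling lemma.
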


We note that strong supercriticality is not a trivial condition under our current assumptions, not even for the particular case of $\lambda$-positive systems to be considered in Section \ref{sec:rpos1} below. Indeed, Section \ref{sec:tou} shows an example of a $\lambda$-positive system which satisfies Assumptions \ref{assumpG}, \ref{assumpG0} and \ref{assumpG2} but is not strongly supercritical. In particular, we have in this example that the random variable $D^{(x)}_\infty$ is zero with positive probability on the event $\Theta^{(x)}$ of non-extinction. 
Nonetheless, whenever $\xi^{(x)}$ is strongly supercritical this is not the case and so one obtains the following corollary.

\begin{corollary}\label{cor:convnu} If Assumptions \ref{assumpG2} hold and $\xi^{(x)}$ is strongly supercritical then $D^{(x)}_\infty > 0$ on $\Theta^{(x)}$. 
In particular, for every $B,B' \in \mathcal{C}_X$ with $\nu(B')> 0$ we have that, conditionally on $\Theta^{(x)}$, as $t \rightarrow +\infty$
	\begin{equation*} 
	\nu_t^{(x)}(B,B') \overset{P}{\longrightarrow} \frac{\nu(B)}{\nu(B')}.
	\end{equation*} 
\end{corollary}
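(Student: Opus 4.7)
The proposal is to deduce the corollary quickly from Theorem \ref{theo:main3} together with Theorem \ref{theo:main}(ii), so the plan amounts to identifying the right set inclusions and applying these results.

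First I would observe that extinction always forces $D_\infty = 0$: on $\Theta^c$ we have $|\xi_t| = 0$ for all $t$ large enough, so $D_t^{(x)}$ becomes identically zero from some time on, hence $D_\infty^{(x)} = 0$. In particular $\Theta^{(x)c} \subseteq \{D_\infty^{(x)} = 0\}$, from which
\begin{equation*}
\sigma(x) \;=\; P_x(D_\infty = 0) \;=\; P_x(\Theta^c) + P_x(D_\infty = 0,\,\Theta) \;=\; \eta(x) + P_x(D_\infty = 0,\,\Theta).
\end{equation*}
Thus the desired conclusion $D_\infty^{(x)} > 0$ almost surely on $\Theta^{(x)}$ is literally the statement $\sigma(x) = \eta(x)$.

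Next I would invoke the hypothesis of strong supercriticality and Theorem \ref{theo:main3}: since $\xi^{(x)}$ is strongly supercritical for the given $x \in J$, item (iv) of that theorem is satisfied, and hence by its equivalence with item (ii) we obtain $\eta(y) = \sigma(y)$ for \emph{every} $y \in J$. Combined with the identity above, this forces $P_x(D_\infty = 0, \Theta) = 0$, which is exactly the claim $D^{(x)}_\infty > 0$ on $\Theta^{(x)}$.

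For the convergence of $\nu^{(x)}_t(B,B')$, I would note that Assumption (B1) supplies $\Phi_x < +\infty$, so by Theorem \ref{theo:main}(i) the process $W^{(x)}(B,B')$ is bounded in $L^2$; then Theorem \ref{theo:main}(ii), specifically the convergence \eqref{eq:ks2}, gives $\nu^{(x)}_t(B,B') \to \nu(B)/\nu(B')$ in probability conditionally on $\{D_\infty^{(x)} > 0\}$. By the first part of the corollary, conditioning on $\{D_\infty^{(x)} > 0\}$ is the same (up to a $P_x$-null set) as conditioning on $\Theta^{(x)}$, so the stated convergence follows at once. The only subtle step is the reduction to Theorem \ref{theo:main3}; once that theorem is in hand, the corollary is essentially bookkeeping, and I would not expect any genuine obstacle at this level.
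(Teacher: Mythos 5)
Your proof is correct and is precisely what the paper intends: the corollary is stated without its own proof because it follows directly from Theorems \ref{theo:main3} and \ref{theo:main} in the way you describe. The chain of deductions --- $\Theta^{(x)c}\subseteq\{D^{(x)}_\infty=0\}$ so that $D^{(x)}_\infty>0$ on $\Theta^{(x)}$ is exactly the identity $\eta(x)=\sigma(x)$, which strong supercriticality supplies via the equivalence (iv)$\Leftrightarrow$(ii) of Theorem \ref{theo:main3}; then (B1) gives $\Phi_x<\infty$, Theorem \ref{theo:main}(i) gives $L^2$-boundedness of $W^{(x)}(B,B')$, and Theorem \ref{theo:main}(ii) yields \eqref{eq:ks2} conditionally on $\{D^{(x)}_\infty>0\}=\Theta^{(x)}$ (mod $P_x$-null sets) --- is sound and matches the paper's logic.
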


Still, strong supercriticality appears to be a hard condition to check directly, at least in principle. In Section \ref{sec:examples2} we  introduce via examples some general methods to establish strong supercriticality which apply to a wide range of systems.

\subsection{The case of $\lambda$-positive systems}\label{sec:rpos1}

Perhaps the simplest example of an underlyling motions satisfying Assumptions \ref{assumpG} is that of a $\lambda$-\textit{positive} process, which we formally introduce now. 

\begin{definition}\label{def:rpos1} Given some fixed $\lambda \geq 0$, we will say that $X$ is a $\lambda$-positive process if there exist \mbox{a measurable function} $h: J \rightarrow (0,+\infty)$ and a (not necessarily finite) measure $\nu$ on $(J,\B_J)$, both unique up to constant multiples, such that:
	\begin{itemize}
		\item [$\bullet$] For any $x \in J$ the process $M^{(x)}$ as defined in (A1) from Assumptions \ref{assumpG} is a martingale, i.e. $-\lambda$ is a right-eigenvalue of the generator $\mathcal{L}$ with associated eigenfunction $h$.
		\item [$\bullet$] For any bounded measurable $f: J \rightarrow \R$ and every $t \geq 0$ $$
		\int_J \E_x(f(X_t)) d\nu(x) = e^{-\lambda t} \int_J f(x) d\nu(x),
		$$ i.e. $-\lambda$ is a left-eigenvalue of the generator $\mathcal{L}$ with associated eigenmeasure $\nu$.
		\item [$\bullet$] The eigenvectors $h$ and $\nu$ are such that
		$$
		\nu(h):=\int_J h(x) d\nu(x) < +\infty
		$$ and, furthermore, if one chooses their normalization so that $\nu(h)=1$, then for all $x \in J$ one has that, under the change of measure $\tilde{P}_x$ defined in \eqref{eq:mcof}, the process $X^{(x)}$ is ergodic with invariant probability measure $\mu$ given by \eqref{eq:defmu}.
	\end{itemize}
\end{definition}

Observe that if $X$ is $\lambda$-positive then, since $X^{(x)}$ is ergodic under $\tilde{P}_x$ with invariant measure $\mu$, for any $B \in \B_J$ such that the function $\frac{\mathbbm{1}_B}{h}$ is bounded and $\mu$-almost surely continuous, we have 
$$
\lim_{t \rightarrow +\infty} \tilde{\E}_x\left( \frac{\mathbbm{1}_B}{h}(X_t)\right) = \mu\left( \frac{\mathbbm{1}_B}{h}\right)= \nu(B)
$$ as $t \rightarrow +\infty$, so that by \eqref{eq:fasymp} the asymptotic formula \eqref{A2} holds for any such $B$ by taking $p(t)\equiv 1$. 
Thus, we see that $\lambda$-positive process fall naturally into the context of Assumptions \ref{assumpG}. Nonetheless, checking the uniform convergence of $s_B$ over each $J_n$ or the \mbox{square-integrability of $M_t$} will require some further conditions to be imposed on the process. Indeed, in general $\lambda$-positivity alone will not be enough to guarantee the validity of a Kesten-Stigum type result, as one typically needs to impose additional conditions for this to occur. Different possibilities for such conditions were introduced in \cite{ChenShio2007,chen2016,englander2010}, which we shall analyze in detail later in Section \ref{sec:pr}. Here, we propose a new alternative condition to check the remainder of Assumptions \ref{assumpG} in the $\lambda$-positive setting and obtain Theorem \ref{theo:main} which is based on the existence of a Lypaunov functional for the process $X$ under the measure $\tilde{P}$.

\begin{definition}\label{def:lyapunov} A $\B_J$-measurable $V : J \rightarrow \R_{\geq 0}$ is called a \mbox{(\textit{geometric}) \textit{Lyapunov functional for }$X$} (under the measure $\tilde{P}$) whenever it satisfies: 
\begin{itemize}
	\item [V1.] There exists $t > 0$ such that for every $R > 0$ one can find $\alpha_R \in (0,1)$ verifying  
	$$
	|\tilde{\E}_x(f(X_t))-\tilde{\E}_y(f(X_t))| \leq 2(1-\alpha_R)\|f\|_\infty
	$$ for any bounded $\B_J$-measurable $f:J \rightarrow \R$ and all $x,y \in J$ such that $V(x)+V(y) \leq R$.
	\item [V2.] There exist constants $\gamma,K > 0$ such that for all $t \geq 0$ and $x \in J$ one has
	$$
	\tilde{\E}_x(V(X_t)) \leq e^{-\gamma t}V(x) + K.
	$$
\end{itemize} 
\end{definition}

One has the following result relating the validity of Theorem \ref{theo:main} for $\lambda$-positive processes to the existence of a $h$-locally bounded Lyapunov functional with a large enough growth at infinity. 

\begin{proposition}\label{prop:lyapunov1}
	If $X$ is $\lambda$-positive and admits a Lyapunov functional $V$ such that:
	\begin{enumerate}
		\item [V3.] $V$ is $h$-locally bounded, i.e. $\sup_{x \in J_n} V(x) <+\infty$ for each $n \in \N$,
		\item [V4.] $\left\|\frac{h}{1+V}\right\|_\infty <+\infty$,
\end{enumerate}
    then Assumptions \ref{assumpG} are satisfied for all $B \in \mathcal{C}_X$, where $\mathcal{C}_X$ here given by 
	\begin{equation}
	\label{eq:defcx}
	\mathcal{C}_X= \left\{ B \in \B_J :  \left\| \frac{\mathbbm{1}_B}{h}\right\|_\infty <+\infty \right\}.
	\end{equation} Furthermore, there exists a constant $C=C(r(m_1-1),\lambda,V) > 0$ such that for all $x \in J$
	\begin{equation} \label{eq:compvc}
	\Phi_x \leq C \cdot \frac{1+V(x)}{h(x)} < +\infty
	\end{equation} In particular, the convergences in \eqref{eq:conv1} and \eqref{eq:ks2} hold for any $x \in J$ and $B,B' \in \mathcal{C}_X$ with $\nu(B')> 0$, and assumptions (B1)-(B2) are also satisfied.
\end{proposition}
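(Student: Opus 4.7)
The strategy is to reduce everything to estimates under the Doob transform $\tilde{P}$, where $\lambda$-positivity gives ergodicity with invariant law $\mu=h\,d\nu$. The crucial input is that the Lyapunov pair (V1)-(V2) is precisely a minorization+drift condition in the spirit of Harris/Hairer-Mattingly, so it yields constants $c>0$ and $\rho\in(0,1)$ such that
$$
|\tilde{\E}_x(f(X_t))-\mu(f)|\leq c\rho^{t}(1+V(x))\|f\|_\infty
$$
for every bounded measurable $f:J\to\R$, every $x\in J$ and every $t\geq 0$. Establishing this geometric ergodicity is the main (and essentially only non-routine) step of the proof: once it is in hand, the remaining checks reduce to direct computations that combine the $h$-transform identities of the remark following Assumptions~\ref{assumpG} with the growth conditions (V3) and (V4).

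I would first verify (A1). The $h$-transform identity gives $\E_x(h^2(X_t))=h(x)e^{-\lambda t}\tilde{\E}_x(h(X_t))$, and combining (V4) with (V2) produces
$$
\tilde{\E}_x(h(X_s))\leq\left\|\tfrac{h}{1+V}\right\|_\infty\!\bigl(1+\tilde{\E}_x(V(X_s))\bigr)\leq C_1(1+V(x))
$$
uniformly in $s\geq 0$ for some constant $C_1=C_1(V)$. Square-integrability of $M^{(x)}$ is then immediate. For (A2), choose $p(t)\equiv 1$ (trivially regularly varying) and let $\nu$ be the left eigenmeasure; items (A2)(i)-(ii) are then clear, with $\nu(B)\leq\|\mathbbm{1}_B/h\|_\infty$ for $B\in\mathcal{C}_X$ following from $\mu=h\,d\nu$ being a probability measure. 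The representation
$$
s_B(x,t)=\tilde{\E}_x\!\left(\tfrac{\mathbbm{1}_B}{h}(X_t)\right)-\nu(B)
$$
and the ergodicity bound applied to $f=\mathbbm{1}_B/h$ (bounded because $B\in\mathcal{C}_X$) yield $|s_B(x,t)|\leq c\rho^t(1+V(x))\|\mathbbm{1}_B/h\|_\infty$; condition (V3) makes $V$ uniformly bounded on each $J_n$, delivering (A2)(iii). Item (A2)(iv) is immediate from $\tilde{\E}_x(\mathbbm{1}_B/h(X_t))\leq\|\mathbbm{1}_B/h\|_\infty$.

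For the bound on $\Phi_x$, the same computation gives
$$
\E_x(M_s^2)=\frac{e^{\lambda s}}{h(x)}\,\tilde{\E}_x(h(X_s))\leq C_1\,\frac{1+V(x)}{h(x)}\,e^{\lambda s},
$$
and inserting this in the definition of $\Phi_x$ yields
$$
\Phi_x\leq(m_2-m_1)\,C_1\,\frac{1+V(x)}{h(x)}\int_0^\infty re^{-(r(m_1-1)-\lambda)s}\,ds,
$$
where the integral is finite thanks to (I2); this is precisely \eqref{eq:compvc} and implies (B1). For (B2), given $B\in\B_J$ with $\nu(B)>0$, monotone convergence gives $\nu(B\cap\{h>1/n\})\uparrow\nu(B)$, so picking $n$ large enough one has $\nu(B^\star)>0$ with $B^\star:=B\cap\{h>1/n\}$, and $\|\mathbbm{1}_{B^\star}/h\|_\infty\leq n$ shows $B^\star\in\mathcal{C}_X$. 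The announced $L^2$ convergences then follow by feeding the now-verified assumptions into Theorems~\ref{theo:main2} and~\ref{theo:main}.
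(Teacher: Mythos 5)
Your proposal is correct and follows essentially the same route as the paper: both hinge on the geometric $V$-weighted ergodicity estimate for the Doob transform (the paper's Proposition~\ref{prop:Lyapunov}, via Hairer--Mattingly and Meyn--Tweedie), from which (A1), (A2), the bound \eqref{eq:compvc}, (B1) and (B2) are deduced by the same direct computations. The only minor local variation is that you bound $\tilde{\E}_x(h(X_s))$ directly from (V2) and (V4) rather than invoking the ergodicity estimate for the unbounded function $h$, which is a slightly more elementary derivation of the same inequality.
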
 

As a matter of fact, whenever $X$ is $\lambda$-positive and admits such a Lyapunov functional, one can establish \eqref{eq:conv1} and \eqref{eq:ks2} in the almost sure sense. This is the content of our last result.

\begin{theorem}\label{theo:main5} Suppose that $X$ is a $\lambda$-positive process such that the eigenfunction $h$ is continuous and which admits a Lyapunov functional $V$ verifying (V3-V4). Then, for each $x \in J$ such that 
	\begin{equation}
	\label{eq:phibarra}
	\overline{\Phi}_x:=\frac{(m_2-m_1)r}{h(x)} \int_0^\infty \tilde{\E}_x\left(h(X_s)(1+V(X_s))\right)e^{-(r(m_1-1)-\lambda)s}ds < +\infty
	\end{equation} there exists a full $P$-measure set $\Omega^{(x)}$ satisfying that:
	\begin{enumerate}
		\item [i.] For any $\omega \in \Omega^{(x)}$ one has
	\begin{equation}\label{eq:convas}
	\lim_{t \rightarrow +\infty} \frac{\xi^{(x)}_t(B)(\omega)}{\E_x(\xi_t(B'))}= \frac{\nu(B)}{\nu(B')}\cdot D^{(x)}_\infty (\omega)
	\end{equation} for all pairs $B,B' \in \mathcal{C}_X$ satisfying $\nu(\partial B)=0$ and $\nu(B')> 0$, where $\mathcal{C}_X$ is given by \eqref{eq:defcx}. 
	\item [ii.] For any $\omega \in \Omega^{(x)} \cap \Lambda^{(x)}$ one has
	\begin{equation} \label{eq:convas2}
	\lim_{t \rightarrow +\infty} \frac{\xi^{(x)}_t(B)(\omega)}{\xi^{(x)}_t(B')(\omega)} = \frac{\nu(B)}{\nu(B')}
	\end{equation} for all pairs $B,B' \in \mathcal{C}_X$ with $\nu(\partial B) = \nu(\partial B')=0$ and $\nu(B')> 0$. 
	\end{enumerate} 
\end{theorem}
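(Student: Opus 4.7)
My plan is to upgrade the $L^2$-convergence of Theorem \ref{theo:main} to almost sure convergence via an Asmussen--Hering style scheme: (a) prove a.s.\ convergence along a discrete skeleton $t_n = n\delta$ through a Borel--Cantelli bound built on the \emph{strengthened} second-moment $\overline{\Phi}_x$; (b) control the oscillations of $W_t^{(x)}(B,B')$ between consecutive skeleton times; (c) extend the resulting convergence from test sets in $\mathcal{C}_X$ to all sets $B$ with $\nu(\partial B)=0$ by an inner/outer sandwich. Throughout, the continuity of $h$ and the Lyapunov bounds (V2)--(V4) are the workhorse, while $D_t^{(x)} \to D_\infty^{(x)}$ a.s.\ (which is already in hand from Theorem \ref{theo:main2}) provides the target limit.

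For step (a), I would decompose the particles alive at time $t_n$ according to their ancestors at an earlier time $t_n - s$ with $s$ large but fixed, producing a representation
$$
W^{(x)}_{t_n}(B,B') = \frac{\nu(B)}{\nu(B')} D^{(x)}_{t_n-s} + \text{(bias)} + \text{(fluctuation)},
$$
where the bias term is controlled by $\sup_{y \in J_k} |s_B(y,s)|$ (via (A2)(iii)) on the event that every ancestor lies in some $J_k$, and the fluctuation term is controlled by a many-to-two computation (Lemma \ref{lema:mt2}) whose second moment is exactly of the form appearing in $\overline{\Phi}_x$. The Lyapunov drift (V2) makes $\tilde{\E}_x\!\bigl(h(X_u)(1+V(X_u))\bigr)$ grow no faster than $e^{\lambda u}(1+V(x)+K)$, so combined with the gap $r(m_1-1)-\lambda>0$ the total bound decays geometrically in $n$. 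Borel--Cantelli then yields a.s.\ convergence along $t_n=n\delta$, and letting $s=s(n)\to\infty$ slowly recovers the full $L^2$-limit $\tfrac{\nu(B)}{\nu(B')} D_\infty^{(x)}$ using the a.s.\ convergence of $D^{(x)}$.

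For step (b), I would estimate
$$
\E_x\!\Bigl[\sup_{t_n \leq t \leq t_{n+1}} \bigl(W_t^{(x)}(B,B') - W_{t_n}^{(x)}(B,B')\bigr)^2\Bigr]
$$
by a two-sided many-to-one bound on the interval $[t_n,t_{n+1}]$ — here we use the regular variation of $p(t)$ to see that $\E_x(\xi_t(B))$ varies by a uniformly bounded factor over such an interval — and a variance computation using Lemma \ref{lema:mt2}. The Lyapunov estimate (V2) and (V4) again force these bounds to be summable in $n$, so a final Borel--Cantelli swallows the interpolation error into a single null set. This proves \eqref{eq:convas} for every fixed pair $B,B' \in \mathcal{C}_X$ with $\nu(B')>0$. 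To obtain a single full-measure set $\Omega^{(x)}$ valid for \emph{all} pairs with $\nu(\partial B)=0$, I would first run the argument on a countable determining sub-collection $\mathcal{C}_0 \subseteq \mathcal{C}_X$ (e.g.\ generated by $h$-regular balls with centers in a countable dense subset of $J$), and then for a general $B$ with $\nu(\partial B)=0$ sandwich $\mathbf{1}_B$ between functions $h\cdot\mathbf{1}_{B^\pm}/h$ with $B^\pm \in \mathcal{C}_0$, $B^-\subseteq B\subseteq B^+$ and $\nu(B^+\setminus B^-)<\varepsilon$ (possible since $h$ is continuous and $\nu$ is $\sigma$-finite on $J$ away from $\partial_*\overline{J}$). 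Statement (ii) then follows from (i) by writing $\xi_t(B)/\xi_t(B') = W_t(B,B'')/W_t(B',B'')$ for some convenient $B''$ and observing that on $\Lambda^{(x)} \cap \Omega^{(x)}$ the denominator is bounded below by $\nu(B')/\nu(B'') \cdot D_\infty^{(x)} > 0$.

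The main obstacle I anticipate is step (b): the branching dynamics is only càdlàg in time and the empirical measure can have large relative jumps even on short intervals, so turning the $L^2$-rate of step (a) into a uniform bound over $[t_n,t_{n+1}]$ requires delicately combining the many-to-two variance estimate with the Lyapunov decay, and it is precisely here that the stronger condition $\overline{\Phi}_x<+\infty$ (rather than the weaker $\Phi_x<+\infty$ sufficient for $L^2$-convergence) is indispensable. A secondary technical point is the passage in step (c) from countably many pairs to all $\nu$-continuity pairs, which I expect to be standard but requires care to ensure that the approximating sets $B^\pm$ genuinely lie in $\mathcal{C}_X$, i.e.\ stay at positive distance from the absorbing boundary $\partial_*\overline{J}$ where $h$ vanishes.
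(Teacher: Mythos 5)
The overall plan is in the right spirit — prove a.s.\ convergence along lattice times $n\delta$ via a Borel--Cantelli bound exploiting $\overline\Phi_x < +\infty$, then pass to continuous time and extend to all $\nu$-continuity sets — and your steps (a) and (c) line up reasonably well with what the paper does. But step (b) contains a genuine gap, and it is precisely there that the paper's argument is substantively different.

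You propose to control the oscillations between lattice times via a bound on
\[
\E_x\Bigl[\sup_{t_n \leq t \leq t_{n+1}} \bigl(W_t^{(x)}(B,B') - W_{t_n}^{(x)}(B,B')\bigr)^2\Bigr],
\]
obtained from a many-to-two variance estimate plus regular variation of $p$. But $W^{(x)}_t(B,B')$ is \emph{not} a martingale in $t$ (the paper emphasizes this in Section \ref{sec:main1}: due to absorption even $W_t(J,J)$ need not be a martingale), so Doob's maximal inequality is unavailable, and the many-to-two lemma only controls second moments at a fixed time. You would need a maximal inequality or chaining argument for the genuinely non-monotone, non-martingale quantity $\xi_t(B)$, and nothing in your sketch supplies this. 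You flag this as ``the main obstacle'' but offer no way around it; in the form stated, the step does not go through.

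The paper avoids this entirely. Rather than estimating a supremum of deviations, it takes $f=\mathbbm 1_B$ for an \emph{open} set $B$ and uses a pathwise, one-sided inequality: for $t\in[n\delta,(n+1)\delta)$, any particle present at time $n\delta$ that does not branch and whose trajectory remains in $B$ with $h$-value bounded below contributes at least a fixed fraction of $h(u_{n\delta})$ to $U^{(x)}_t$. This yields deterministically (along a.e.\ sample path)
\[
\liminf_{t\to\infty} U^{(x),\mathbbm 1_B}_t \;\geq\; \nu(h\,\mathbbm 1_B)\cdot D^{(x)}_\infty
\]
after sending $\delta,\varepsilon\to 0$, with no variance or maximal estimate required. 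Since this holds simultaneously over a countable base of open sets closed under finite unions, and since the normalized quantities $\mu^{(x)}_t(\omega)(B) := U^{(x),\mathbbm 1_B}_t(\omega)/D^{(x)}_t(\omega)$ are probability measures on $J$, the Portmanteau theorem upgrades the one-sided $\liminf$ bound for open sets into weak convergence, from which the two-sided limit for $\nu$-continuity sets $B$ (and hence \eqref{eq:convas}) follows by integrating $\mathbbm 1_B/h$, a bounded $\mu$-a.s.\ continuous function. This is the crucial idea you are missing: one never needs a two-sided control over short time windows because one-sided (lower) control plus normalization plus Portmanteau already delivers two-sided convergence. Your sandwich $B^-\subseteq B\subseteq B^+$ in step (c) is essentially the Portmanteau argument in disguise, so that part is fine, but it has to be fed by a correct interpolation step, and the one you propose would not produce it.

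A smaller point on step (a): the paper's Lemma \ref{lema:18} conditions on $\F_{n\delta}$ and studies $U_{2n\delta}-\E_x(U_{2n\delta}\mid\F_{n\delta})$, so the ``lag'' grows linearly with $n$ rather than being a fixed $s$ later sent to infinity; this is what makes both Borel--Cantelli sums finite directly from (V2), (V4) and $\overline\Phi_x<\infty$. Your fixed-then-slowly-growing $s(n)$ could be made to work, but you would have to balance the decay of the bias term (via Proposition \ref{prop:Lyapunov}'s $e^{-\alpha s}$) against the Borel--Cantelli summability; the paper's choice sidesteps that bookkeeping.
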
 

Theorem \ref{theo:main5} is very much in the spirit of previous results in \cite{englander2010, ChenShio2007, chen2016} and references therein. However, the main difference with these works lies in that we have replaced their integrability and uniform convergence to equilibrium assumptions on the process with the alternative requirement that there exists a suitable Lyapunov functional for $X$. As we will see in the examples of \mbox{Section \ref{sec:examples}} below, this alternative condition is often less restrictive than those in the aforementioned references and can be in occasions easier to check as well. Finally, note that Theorem \ref{theo:main5} can be combined with Theorem \ref{theo:main3} to obtain \eqref{eq:convas2} for any $\omega \in \Omega^{(x)} \cap \Theta^{(x)}$ whenever $\xi^{(x)}$ is strongly supercritical. 

\subsection{Comparison with some previous results}\label{sec:pr}

To conclude the section, let us now discuss how our assumptions/results compare to those of previous works from the literature. For simplicity, we consider here only those references which, to the best of our knowledge, contain the most recent progresses on this matter, but refer the reader to \cite{englander2010, englander2014spatial} for a more detailed analysis including also older works with relevant results, namely \cite{asmussen1976,watanabe1967}, and also for results in the related context of superprocesses, e.g \cite{englander2009law, Chen2015}. 

We begin by pointing out that in all three references \cite{englander2010, ChenShio2007, chen2016} laws of large numbers are established for branching Markov systems with a state-dependent branching rate $r=(r(x))_{x \in J}$ under the assumption of $\lambda$-positivity (of the branching process). As noted earlier, this does not always imply that the the underlying motion is itself $\lambda$-positive, although it effectively does so in the case we consider here of a constant branching rate. Thus, the first important difference with \cite{englander2010, ChenShio2007, chen2016} is that our results are not restricted to this type of motions, see Section \ref{sec:bbm} for instance. In fact, as far as we know, our approach is the first to yield general results of this type which go beyond the $\lambda$-positive setting, with the exception of course of \cite{kesten1978} but whose results are stated without proof and \cite{englander2009law} which deals with superprocesses (and keeping in mind that we are only considering constant branching rates). However, it is not entirely clear how our method can be of use to deal with state-depending branching rates, so that this is perhaps (so far) one drawback of our approach when compared to previous works. 

As far as our results in the $\lambda$-positive setting go, the main differences with  \cite{englander2010, ChenShio2007, chen2016} are that:
\begin{enumerate}
	\item [i.] In \cite{ChenShio2007} a functional analytic method is used which forces the authors to restrict their setting only to symmetric systems, whereas our approach (and also the one in \cite{englander2010}) allows to treat non-symmetric systems as well. Furthermore, even in the symmetric setting, their strong spectral assumptions are not satisfied in many of our applications. More precisely, with the exception of the model in Section \ref{sec:tou}, none of the examples of Section \ref{sec:examples} satisfy the assumption that the eigenfunction $h$ is bounded. Moreover, in this exceptional example the branching rate $r$ does not satisfy the assumption of being in the Kato class $K_\infty(X)$, which (for this particular system) consists of rapidly decaying functions.
	\item [ii.] In \cite{chen2016} these assumptions are relaxed and the authors are also able to consider cases with an unbounded eigenfunction $h$ or with branching rates not necessarily belonging to $K_\infty(X)$. However, the authors still require the assumption of \textit{asymptotic intrinsic ultracontractivity}, i.e. that there exist constants $c_1,c_2 > 0$ such that for all $t$ sufficiently large
\begin{equation} \label{eq:aiu}
	\sup_{x,y \in J} \left| \tilde{f}(x,y,t) - 1 \right| \leq c_1 e^{-c_2 t}
\end{equation} where $\tilde{f}(x,\cdot,t)$ here denotes the $\mu$-density of $X^{(x)}_t$ under the change of measure \mbox{given by $\tilde{P}_x$.} 
This uniform convergence over all of $J$ is not satisfied in any of the examples of Section \ref{sec:examples} where $J$ is unbounded.
	 \item [iii.] Finally, in \cite{englander2010} the authors require only that the following two conditions hold:
	 \begin{itemize}
	 	\item [E1.] $\nu(h^2):= \int_J h^2(x)d\nu(x) < +\infty$.
	 	\item [E2.] There exists a family of domains $(J_t)_{t \geq 0} \subseteq \B_J$ with $J_t \nearrow J$ such that:
	 	\begin{enumerate}
	 		\item [a)] One has $P_x$-almost surely that $\overline{\xi}^{(x)}_t$ is contained in $J_t$ for all $t$ large, i.e. 
	 		$$
	 		P_x \left( \lim_{t \rightarrow +\infty} \xi_t( J_t^c) = 0\right) = 1.
	 		$$
	 		\item [b)] For every compact set $\mathcal{K} \subseteq J$ one has
	 		\begin{equation}\label{eq:ehkp1}
	 		\lim_{t \rightarrow +\infty} \left[\sup_{x \in J_t,y \in \mathcal{K}} \left| \tilde{f}(x,y,t) - 1 \right| = 0 \right],
	 		\end{equation} where $\tilde{f}(x,\cdot,t)$ denotes the $\mu$-density of $X^{(x)}_t$ under $\tilde{P}_x$ as before.   
	 	\end{enumerate}
 	 \end{itemize} On the one hand, the existence of a Lyapunov functional $V$ for $X$ satisfying $\left\|\frac{h}{1+V}\right\|<+\infty$ is already enough to guarantee that $\nu(h^2)<+\infty$ holds (see Proposition \ref{prop:Lyapunov} below) whereas, in the setting of branching diffusions considered in \cite{englander2010}, (E1) alone implies that $\Phi_x < +\infty$ for all $x \in J$ (see \cite[Lemma 17]{englander2010}). Therefore, we see that there are no significant differences with our assumptions as far as (E1) goes.  
  On the other hand, (E2) is a milder version of \eqref{eq:aiu} in which, essentially, the family $(J_t)_{t \geq 0}$ is quantifying the speed of growth of the branching dynamics $\xi$ and (E2-b) ensures that the process $\xi$ is somehow ``mixing faster than it is spreading''. 
  Thus, the main difference with our approach is that we have replaced (E2) with the existence of a Lyapunov functional for $X$ satisfying (V3-V4) as well as $\overline{\Phi}_x < +\infty$. In principle, this has two advantages: first, it does not require for us to quantify the speed of growth of $\xi$ which may sometimes be difficult to do and, also, it is particularly useful whenever $\tilde{f}$ cannot be computed explicitly. Indeed, as we will see in Sections \ref{sec:gw} and \ref{sec:cp}, in many cases it is still possible to find an appropriate $V$ even though \eqref{eq:ehkp1} may become almost impossible to check directly. However, we do require the additional integrability condition $\overline{\Phi}_x<+\infty$ which, in principle, is not guaranteed by (E1). 
\end{enumerate}

\section{Examples and applications} \label{sec:examples}

We now illustrate our results through a series of examples. We present first the case of generic ergodic motions, which fall in the category of $0$-positive processes, and then proceed on to study four different models with $\lambda$-positive motions for $\lambda > 0$. Finally, we conclude in Section \ref{sec:bbm} with our results for the Branching Brownian Motion with a negative drift and absorption at the origin, which is the canonical example of a system with an underlying motion which is not $\lambda$-positive and that constitutes our most significant and novel contribution. 
In each case, we restate our results for their specific context, delaying the verification of all required assumptions to Section \ref{sec:examples2}. 

\subsection{Ergodic motions}\label{sec:cwa}

Suppose that $X$ is a motion without absorbing states, i.e. $\overline{J}=J$, which is ergodic and has stationary probability distribution $\nu$ on $(J,\B_J)$. In this case, it is easy to check that $X$ is $0$-positive and verifies \eqref{A2} with $h \equiv 1$, $p \equiv 1$, $\lambda = 0$ for any $B \in \B_J$ such that 
$$
s_B(x,t):= P_x(X_t \in B) - \nu(B) \underset{t \rightarrow +\infty}{\longrightarrow} 0,
$$ so that (A2) is verified for $\mathcal{C}_X$ given by
\begin{equation}
\label{eq:c}
\mathcal{C}_X:=\left\{ B \in \B_J : \lim_{t \rightarrow +\infty} s_B(\cdot,t) = 0 \text{ uniformly over $J$}\right\},
\end{equation} since $J_n\equiv J$ for all $n \in \N$ by choice of $h$. Moreover, with this choice of $h$ and $\lambda$, (A1) is immediate and so is (B1), since a direct computation shows that for all $x \in J$
$$
\Phi_x = \frac{m_2-m_1}{m_1-1}< +\infty.
$$ Finally, since there is no absorption and $\sup_{x \in J} \Phi_x < +\infty$, $\xi$ is immediately strongly supercritical. Thus, we see that, in principle, almost all of our assumptions are trivially satisfied in this setting. However, \eqref{eq:c} is not entirely satisfactory since $\mathcal{C}_X$ will most likely be empty unless $J$ is compact. This problem arises because of the way in which we have defined the sets $J_n$ in Section \ref{sec:results}, which is more adapted to handle more complex situations in which $h$ is non-constant. Indeed, \mbox{since $h\equiv 1$} here, the sets $J_n$ immediately coincide with all of $J$ whereas in the case of transient motions or absorbed motions in which $\nu$ is the Yaglom limit, these $J_n$ are typically compact sets which agree with all of $J$ only in the limit as $n \rightarrow +\infty$ (one can verify this in any of the \mbox{subsequent examples).} Still, we chose to maintain this definition of $J_n$ (instead of just taking $(J_n)_{n \in \N}$ as any increasing sequence of compact sets converging to $J$, for instance) because then Assumptions \ref{assumpG} become much simpler and more transparent for the (perhaps more relevant) case of a non-constant $h$. Fortunately, it is the case that Assumptions \ref{assumpG} can be relaxed, see Section \ref{sec:ergod2} below for details, and as a consequence we can obtain Theorem \ref{theo:main} also for ergodic systems where $J$ is not compact, by working instead with the (in this case) larger class 
\begin{equation} \label{eq:defctilde}
\tilde{\mathcal{C}}_X:=\left\{ B \in \B_J :\lim_{t \rightarrow +\infty} s_B(\cdot,t) = 0 \text{ uniformly over compact sets of $J$} \right\}.
\end{equation} 
To be more precise, we can summarize our results for ergodic motions as follows. 

\begin{theorem}\label{theo:ergod} Suppose that $(m,r)$ verifies (I1) and (I2) with $\lambda=0$. Then, if $X$ is an irreducible (in the sense of (B3)) ergodic process with stationary distribution $\nu$, the following assertions hold:  
	\begin{enumerate}
		\item [i.] For every $x \in J$ the martingale $D^{(x)}$ converges almost surely and in $L^2$ to some $D^{(x)}_\infty \in L^2$ which is strictly positive almost surely and satisfies 
		$$
		\E_x(D_\infty)=1 \hspace{2cm}\text{ and }\hspace{2cm}\E_x(D_\infty^2)=\frac{m_2-m_1}{m_1-1}.
		$$ 
		\item [ii.] For every $x \in J$ and every pair $B,B' \in \tilde{\mathcal{C}}_X$ with $\nu(B') > 0$ one has 
		\begin{equation} \label{eq:conv3}
		W^{(x)}_t(B,B') \overset{L^2}{\longrightarrow} \frac{\nu(B)}{\nu(B')} \cdot D_\infty^{(x)}\hspace{2cm}\text{ and }\hspace{2cm}\nu^{(x)}_t(B,B') \overset{P}{\longrightarrow} \frac{\nu(B)}{\nu(B')}.
		\end{equation}
		\item [ii.] The moment generating operator $G$ has exactly two fixed points, $\mathbf{0}$ and $\mathbf{1}$.
		\item [iii.] If $X$ admits a Lyapunov functional (as defined in Definition \ref{def:lyapunov}) which is bounded over compact subsets of $J$ then \eqref{eq:conv3} holds for all pairs $B,B' \in \B_J$ with $\nu(B')>0$ and, moreover, for each $x \in J$ 
		there exists a full $P$-measure set $\Omega^{(x)}$ such that for any $\omega \in \Omega^{(x)}$ one has
		$$
		\lim_{t \rightarrow +\infty} W^{(x)}_t(B,B')= \frac{\nu(B)}{\nu(B')}\cdot D^{(x)}_\infty (\omega) \hspace{0.8cm} \text{ and } \hspace{0.8cm}
		\lim_{t \rightarrow +\infty} \nu^{(x)}_t(B,B')(\omega) = \frac{\nu(B)}{\nu(B')},
		$$ for all pairs $B,B' \in \B$ such that $\nu(\partial B) = \nu(\partial B') = 0$ and $\nu(B')>0$.
	\end{enumerate}
\end{theorem}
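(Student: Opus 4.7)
The plan is to reduce each part of the theorem to the general results of Sections \ref{sec:main1}--\ref{sec:rpos1}, exploiting the fact that the ergodic setting collapses every auxiliary quantity into something explicit. First I would observe that with $h\equiv 1$, $p\equiv 1$ and $\lambda=0$ the martingale $M^{(x)}$ is identically $1$, so that Theorem \ref{theo:main2} yields
\[
\Phi_x=(m_2-m_1)\int_0^\infty re^{-r(m_1-1)s}\,ds=\frac{m_2-m_1}{m_1-1},
\]
which is finite and independent of $x\in J$. In particular (B1) holds, and since $\Phi$ is bounded on $J$ the event $\Gamma^{(x)}$ of Definition \ref{def:ss} coincides with $\Theta^{c}$: the minimum of $\Phi$ over surviving particles cannot diverge except by extinction. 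Strong supercriticality is thus automatic. Assumption (B3) is the standing irreducibility hypothesis, and (B2) (read against $\tilde{\mathcal{C}}_X$) is checked by inner regularity of $\nu$ combined with the fact that for a relatively compact $B^{*}$ the ergodic convergence $P_x(X_t\in B^{*})\to\nu(B^{*})$ is uniform over compact sets of initial positions.

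Given these preparations, the first part of the theorem follows from Theorem \ref{theo:main2} (for the $L^{2}$-limit and its first two moments) together with Doob's theorem for the almost sure limit of the nonnegative martingale, while the claimed strict positivity of $D^{(x)}_\infty$ (to be read as positivity a.s.\ on the non-extinction event $\Theta^{(x)}$) comes from Corollary \ref{cor:convnu}. The statement about the fixed points of $G$ is then a direct application of Theorem \ref{theo:main3}; under (B3) the extinction function $\eta$ is moreover constant, so the two fixed points $\mathbf{1}$ and $\eta$ are both constants.

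The main obstacle is the convergence statement for $W_t^{(x)}(B,B')$ and $\nu_t^{(x)}(B,B')$, where we want Theorem \ref{theo:main} to apply to the enlarged class $\tilde{\mathcal{C}}_X$ rather than $\mathcal{C}_X$. Because $h\equiv 1$ forces $J_n=J$ for every $n$, assumption (A2-iii) as written demands uniform convergence of $s_B(\cdot,t)$ over all of $J$, which typically fails when the state space is unbounded. My plan is to revisit the proof of Theorem \ref{theo:main} (Section \ref{sec:proof}) and to observe that the many-to-two decomposition integrates the remainder $s_B(y,s)$ only against the distribution of the spine started at a fixed $x$; ergodicity guarantees that this distribution is tight, so up to a vanishing tail the integration can be restricted to a compact set, and the $J_n$-uniform bound can be replaced by a bound uniform over compacts, which is exactly what membership in $\tilde{\mathcal{C}}_X$ provides. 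The precise execution of this relaxation is deferred to Section \ref{sec:ergod2}, after which the $L^{2}$ and probability convergences asserted in \eqref{eq:conv3} follow verbatim from the proof of Theorem \ref{theo:main}.

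Finally, the almost sure statement under a Lyapunov functional is an application of Theorem \ref{theo:main5}. With $h\equiv 1$ the auxiliary quantity $\overline{\Phi}_x$ simplifies to
\[
\overline{\Phi}_x=(m_2-m_1)r\int_0^\infty \tilde{\E}_x\bigl(1+V(X_s)\bigr)e^{-r(m_1-1)s}\,ds,
\]
and (V2) combined with $r(m_1-1)>0$ makes this finite for every $x\in J$; condition (V4) is automatic since $h\equiv 1$ is dominated by $1+V$, and (V3) is the hypothesized boundedness of $V$ on compact subsets of $J$ (again under the same relaxation of (A2-iii) used in the previous paragraph). Theorem \ref{theo:main5} then furnishes a full $P$-measure event on which the normalized occupation measures converge for all $B,B'$ with $\nu(\partial B)=\nu(\partial B')=0$ and $\nu(B')>0$; intersecting with $\Lambda^{(x)}=\Theta^{(x)}$ a.s.\ (a consequence of strong supercriticality combined with Corollary \ref{cor:convnu}) then gives the claimed almost sure convergence of $\nu_t^{(x)}(B,B')$ on the event of non-extinction.
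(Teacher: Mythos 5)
Your proposal is correct and takes essentially the same route as the paper: you identify that $h\equiv 1$ forces $J_n=J$, rendering $\mathcal{C}_X$ in \eqref{eq:c} too restrictive, and propose replacing the $J_n$-uniformity in (A2-iii) by uniformity over compact sets (thereby passing to $\tilde{\mathcal{C}}_X$), after which the general results (Theorems \ref{theo:main2}, \ref{theo:main}, \ref{theo:main3}, \ref{theo:main5} and Corollary \ref{cor:convnu}) apply directly; this is precisely the content of the paper's Section \ref{sec:ergod2} via conditions (J1)--(J3). The only slight imprecision is attributing the uniform-in-$t\in[0,T]$ tightness needed for the analogue of Lemma \ref{lema:A3} (condition (J3)) to ergodicity, when in fact it follows from the cadlag path regularity together with local compactness of $J$; ergodicity is what makes $\tilde{\mathcal{C}}_X$ non-trivial, not what gives (J3).
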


\subsection{Subcritical Galton-Watson process}\label{sec:gw} Let us now move on to consider $\lambda$-positive motions with $\lambda > 0$.
As a first example, let $X$ be a continuous-time Galton-Watson process, i.e. a process on the state space $\overline{J}:=\N_0$ with transition rates $q$ given for any $x \in \N_0$ and $y \in \N^*:=\{-1\} \cup \N_0$ by
$$
q(x,x+y):=x \rho(y), 
$$ where $\rho$ is some probability vector in $\N^*$ representing the offspring distribution of each individual in the branching process (minus $1$). We assume that $X$ is subcritical, i.e. that $-\lambda:= \sum_y y \rho(y) <0$, so that $X$ is almost surely absorbed at $0$. 
It is well-known, see e.g. \cite{seneta2006}, that in this case $X$ is $\lambda$-positive with associated eigenfunction $h$ given by $h(x) \propto x$ and that $\nu$ is a finite measure assigning positive mass to all $x \in \N$, although an explicit expression for $\nu$ is, in general, \mbox{not known.} Furthermore, if one assumes that
	 $\rho(-1) \in (0,1)$ then $X$ is in fact irreducible (in the classical sense). Our results for this model are the following. 
	 
	\begin{theorem} Let $X$ be a Galton-Watson process with an offspring distribution $\rho$ satisfying 
		\begin{enumerate}
			\item [$\bullet$] $-\lambda:= \sum_{y} y\rho(y)< 0$,
			\item [$\bullet$] $\rho(-1) \in (0,1)$.
		\end{enumerate}
		Then, for any pair $(m,r)$ satisfying Assumptions \ref{assumpG0}, the following assertions hold for each $x \in \N$:
		\begin{itemize}
			\item [i.]  The martingale $D^{(x)}$ is bounded in $L^2$ if and only if $\sigma_\rho^2<+\infty$, where 
			$$
			\sigma_\rho^2:= \sum_{y} y^2\rho(y).
			$$ In this case, $D^{(x)}$ converges almost surely and in $L^2$ to some random variable $D^{(x)}_\infty \in L^2$ which is strictly positive on the event $\Theta^{(x)}$ of non-extinction and, moreover, satisfies 
			$$
			\E_x(D_\infty)=1 \hspace{2cm}\text{ and }\hspace{2cm}\E_x(D_\infty^2)=\Phi_x.
			$$ 
			\item [ii.] For all pairs $B,B' \subseteq \N$ with $B' \neq \emptyset$, $W^{(x)}(B,B')$ is bounded in $L^2$ if and only if $\sigma_\rho^2 < +\infty$. In this case, we have 
			$$
			W^{(x)}_t(B,B') \overset{L^2}{\longrightarrow} \frac{\nu(B)}{\nu(B')} \cdot D^{(x)}_\infty
			$$ and, conditionally on the event of non-extinction, also that 
			$$
			\nu^{(x)}_t(B,B') \overset{P}{\longrightarrow} \frac{\nu(B)}{\nu(B')}.
			$$
			\item [iii.] $\xi^{(x)}$ is strongly supercritical. In particular, the associated moment generating operator $G$ has exactly two fixed points, $\eta$ and $\mathbf{1}$. 
			\item [iv.] If in addition we have $k_\rho:=\sum_y y^3\rho(y) <+\infty$ then
			there exists a full $P$-measure set $\Omega^{(x)}$ such that for all pairs $B,B' \subseteq \N$ with $B' \neq \emptyset$ one has 
			$$
			\lim_{t \rightarrow +\infty} W^{(x)}_t(B,B')= \frac{\nu(B)}{\nu(B')}\cdot D^{(x)}_\infty (\omega)
			$$ for any $\omega \in \Omega^{(x)}$ and also that
			$$
			\lim_{t \rightarrow +\infty} \nu^{(x)}_t(B,B')(\omega) = \frac{\nu(B)}{\nu(B')}
			$$ for any $\omega \in \Omega^{(x)} \cap \Theta^{(x)}$.
		\end{itemize}
	\end{theorem}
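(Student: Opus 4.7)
The plan is to reduce the whole statement to an application of Theorems \ref{theo:main2}, \ref{theo:main}, \ref{theo:main3} and \ref{theo:main5}, by checking their hypotheses through Proposition \ref{prop:lyapunov1} with an explicit Lyapunov functional. Since $\mathcal{L}(x)=-\lambda x$, the function $h(x)=x$ realises $X$ as $\lambda$-positive, with associated left eigenmeasure $\nu$ on $\N$ (the classical Yaglom/quasi-stationary measure) of finite $h$-moment, so that the Q-process $\tilde X$ is ergodic with invariant distribution $\mu$ given by \eqref{eq:defmu}. Since $\rho(-1)\in(0,1)$, $X$ is classically irreducible on $\N_0$, which in turn yields (B3). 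Finally, because $h(x)=x\ge 1$, the class $\mathcal{C}_X$ defined in \eqref{eq:defcx} is simply the power set of $\N$, matching the range of $B,B'$ in the statement.

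For (i) and (ii), I first assume $\sigma_\rho^2<\infty$ and use the Lyapunov functional $V(x)=x$. Conditions (V3) and (V4) are trivial (the sets $J_n$ are finite and $x/(1+x)\le 1$); minorisation (V1) on the sublevel sets $\{V\le R\}$, which are finite subsets of $\N$, follows at any fixed time $t>0$ from positivity of the transitions $\tilde P_t(x,z)=e^{\lambda t}(z/x)P_t(x,z)$, itself a consequence of classical irreducibility of $X$. The drift (V2) reduces to the one-line computation $\tilde{\mathcal{L}}V(x)=x^{-1}[(\mathcal{L}+\lambda)(x^2)](x)=-\lambda x+\sigma_\rho^2$. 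Proposition \ref{prop:lyapunov1} then supplies Assumptions \ref{assumpG} together with the bound $\Phi_x\le C(1+x)/x<\infty$, and (i)--(ii) follow from Theorems \ref{theo:main2} and \ref{theo:main}. The converse is immediate from the standard second-moment ODE $\tfrac{d}{dt}\E_1(X_t^2)=-2\lambda\,\E_1(X_t^2)+\sigma_\rho^2 e^{-\lambda t}$, which forces $\E_x(X_t^2)=+\infty$ as soon as $\sigma_\rho^2=+\infty$, hence $\Phi_x=+\infty$ by the very definition in Theorem \ref{theo:main2}.

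For (iii), the crucial observation is that the above bound $\Phi_y\le C(1+y)/y\le 2C$ holds \emph{uniformly} on $J=\N$. Hence the event $\Gamma^{(x)}$ of Definition \ref{def:ss} reduces, modulo a $P_x$-null set, to extinction, since $\min_{u\in\overline\xi_t}\Phi_{u_t}$ can only reach $+\infty$ through the convention $\min_\emptyset=+\infty$. Thus $\eta(x)=P_x(\Theta^c)=P_x(\Gamma)$, which is precisely strong supercriticality, and Theorem \ref{theo:main3} delivers the two-fixed-point statement; Corollary \ref{cor:convnu} then yields the strict positivity of $D^{(x)}_\infty$ on $\Theta^{(x)}$ asserted in (ii).

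For (iv), under the additional assumption $k_\rho<\infty$ (which implies $\sigma_\rho^2<\infty$), I would use the stronger Lyapunov functional $V(x)=x^2$: the analogous computation $\tilde{\mathcal{L}}(x^2)=-2\lambda x^2+3\sigma_\rho^2 x+k_\rho$ gives (V2) after absorbing the linear term into $-\lambda V$, while (V1), (V3), (V4) are verified exactly as before. Gronwall then produces $\sup_{s\ge 0}\tilde{\E}_x(X_s^2)<\infty$, so $\overline\Phi_x<\infty$ in \eqref{eq:phibarra} because $r(m_1-1)>\lambda$ by (I2), and Theorem \ref{theo:main5} yields the almost sure convergence of (iv); the topological requirements $\nu(\partial B)=\nu(\partial B')=0$ are automatic because every subset of $\N$ is clopen, and strong supercriticality from (iii) combined with the remark after Theorem \ref{theo:main5} extends the ratio statement to all of $\Theta^{(x)}$. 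The main obstacle in the whole plan is really the verification of (V1): positivity of transitions and finiteness of sublevel sets clearly force the uniform minorisation, but writing down the explicit lower bound of $\tilde P_t(x,z)$ over a finite set requires some care in the choice of Doeblin point (e.g.\ $z=1$, exploiting $\rho(-1)>0$).
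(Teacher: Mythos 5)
Parts (i)--(iii) of your plan follow the same route as the paper: take $h(x)=x$, verify that $V:=h$ is a Lyapunov functional (the drift computation $\tilde{\mathcal{L}}(h)(x)=\sigma_\rho^2-\lambda x$ and the Doeblin minorisation at $z=1$ are exactly what the paper does), invoke Proposition~\ref{prop:lyapunov1} for Assumptions~\ref{assumpG} and (B1)--(B2), check (B3) by classical irreducibility, and then use $\sup_{y\in\N}\Phi_y<\infty$ (which holds because $h\geq 1$ together with \eqref{eq:compvc}) to get strong supercriticality, essentially by the remark that $\Gamma^{(x)}=\left(\Theta^{(x)}\right)^c$ when $\Phi$ is uniformly bounded. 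All of this is correct and matches the paper.

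There is, however, a genuine gap in your treatment of (iv). You propose to use $V(x)=x^2$ as the Lyapunov functional in Theorem~\ref{theo:main5}. The drift computation $\tilde{\mathcal{L}}(x^2)=-2\lambda x^2+3\sigma_\rho^2 x+k_\rho$ is correct and does yield (V2), and (V1),(V3),(V4) hold; but with this choice the condition \eqref{eq:phibarra} reads
\[
\overline\Phi_x=\frac{(m_2-m_1)r}{x}\int_0^\infty \tilde{\E}_x\bigl(X_s(1+X_s^2)\bigr)\,e^{-(r(m_1-1)-\lambda)s}\,ds,
\]
which requires control of the \emph{third} moment $\tilde{\E}_x(X_s^3)$, not of $\tilde{\E}_x(X_s^2)$ as you claim. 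The Gronwall bound you derive for $\tilde{\E}_x(X_s^2)$ does not close this. To control $\tilde{\E}_x(X_s^3)$ in the same way you would need $\sum_y y^4\rho(y)<\infty$, which is not among the hypotheses. The correct route, and the one the paper takes, is to keep $V=h=x$ as the Lyapunov functional throughout (so that $\overline\Phi_x$ involves only $\tilde{\E}_x(X_s+X_s^2)$) and then verify separately, via Dynkin's formula for $V^2=x^2$ and the assumption $k_\rho<\infty$, that $\sup_{s\geq 0}\tilde{\E}_x(X_s^2)<\infty$. That closes $\overline\Phi_x<\infty$ under exactly the stated third-moment hypothesis.

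Two smaller points. First, for the converse in (i) you invoke the second-moment ODE, but that ODE is not even well posed when $\sigma_\rho^2=\infty$; a cleaner route is to lower-bound $\E_1(X_t^2)$ by conditioning on the first jump and using Jensen, which gives $\E_1(X_t^2)\gtrsim\sum_y\rho(y)(1+y)^2=\infty$. Second, regarding (V1): the paper proves the explicit lower bound $P_x(X_1=1)\ge e^{-1}[\rho(-1)]^{x-1}/(x-1)!$ (no jumps up, $x-1$ jumps down), so the worry you raise at the end about ``writing down the explicit lower bound'' is easily resolved along the lines you anticipated.
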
 
	
\subsection{Subcritical contact process on $\Z^d$ (modulo translations)}\label{sec:cp}

Let $\mathcal{P}_{f}(\Z^d)$ denote the class of all finite subsets of $\Z^d$ and $Y=(Y_t)_{t \geq 0}$ be the contact process on $\Z^d$, i.e. the Markov process on $\mathcal{P}_{f}(\Z^d)$ with transition rates $q$ given, for any $\sigma \in \mathcal{P}_{f}(\Z^d)$ and $x \in \Z^d$, by 
$$
q(\sigma,\sigma \cup \{x\}) = \gamma |\{ y \in \sigma : |y-x|_1=1\}| \hspace{1cm} \text{ and }\hspace{1cm} q(\sigma,\sigma-\{x\}) = \mathbbm{1}_{\sigma}(x),
$$ where $\gamma > 0$ is a fixed constant called the \textit{infection rate}. Notice that $Y$ is translation invariant, i.e. for any $\sigma \in \mathcal{P}_f(\Z^d)$ and $x \in \Z^d$ one has
$$ 
Y^{(\sigma)} \sim Y^{(\sigma + \{x\})} + \{-x\},
$$ so that there are no finite measures $\nu \neq 0$ verifying \eqref{A2} for $Y$.  
This can be fixed if one considers the process modulo translations. Indeed, say that two non-empty sets $\sigma,\sigma' \in \mathcal{P}_f(\Z^d)$ are \textit{equivalent} if they are translations of each other. Let $J$ denote the quotient space obtained from this equivalence and, for any non-empty $\sigma \in \mathcal{P}_f(\Z^d)$, let $\langle \sigma \rangle$ denote its corresponding equivalence class in $J$. Also, set $\langle \emptyset \rangle := \emptyset$ and $\overline{J}:= J \cup \{ \emptyset\}.$ Then, for any $\zeta \in J$ we define $X^{(\zeta)}$ by taking $\sigma_\zeta \in \mathcal{P}_f(\Z^d)$ such that $\langle \sigma_\zeta \rangle = \zeta$ and setting 
$
X^{(\zeta)}_t:= \langle  Y^{(\sigma_\zeta)}_t \rangle$. We call $X$ the \textit{contact process on $\Z^d$ modulo translations}.

It is well-known, see \cite{BezGri1990}, that $J$ is an irreducible class for the process $X$ and that there exists $\gamma_c=\gamma_c(d) > 0$ such that the absorbing state $\emptyset$ is reached almost surely if and only if $\gamma \leq \gamma_c$. Moreover, it has been shown that for $\gamma < \gamma_c$ then $\lambda, h$ and $\nu$ as in Assumptions \ref{assumpG} indeed exist and that the process is in fact $\lambda$-positive, see \cite{ferrari1996,andjel2015}, although neither $h$ nor $\nu$ are explicitly known. What is known, however, is that $\nu$ is finite and it assigns positive mass to every $x \in J$, see \cite{ferrari1996}. Our results for this system are the following.

\begin{theorem} Let $X$ be a contact process on $\Z^d$ modulo translations with infection rate $\gamma < \gamma_c$. Then, for any pair $(m,r)$ satisfying Assumptions \ref{assumpG0}, the following assertions hold for each $\zeta \in J$:
	\begin{itemize}
		\item [i.]  The martingale $D^{(\zeta)}$ converges almost surely and in $L^2$ to some random variable $D^{(\zeta)}_\infty \in L^2$ which is strictly positive on the event $\Theta^{(\zeta)}$ of non-extinction and, moreover, satisfies 
		$$
		\E_\zeta(D_\infty)=1 \hspace{2cm}\text{ and }\hspace{2cm}\E_\zeta(D_\infty^2)=\Phi_\zeta <+\infty.
		$$ 
		\item [ii.] For all pairs $B,B' \subseteq J$ with $B' \neq \emptyset$ we have as $t \rightarrow +\infty$ 
		$$
		W^{(\zeta)}_t(B,B') \overset{L^2}{\longrightarrow} \frac{\nu(B)}{\nu(B')} \cdot D^{(\zeta)}_\infty
		$$ and, conditionally on the event of non-extinction, also that 
		$$
		\nu^{(\zeta)}_t(B,B') \overset{P}{\longrightarrow} \frac{\nu(B)}{\nu(B')}.
		$$
		\item [iii.] $\xi^{(\zeta)}$ is strongly supercritical. In particular, the associated moment generating operator $G$ has exactly two fixed points, $\eta$ and $\mathbf{1}$. 
		\item [iv.] There exists a full $P$-measure set $\Omega^{(\zeta)}$ such that for all pairs $B,B' \subseteq J$ \mbox{with $B' \neq \emptyset$ one has}
		$$
		\lim_{t \rightarrow +\infty} W^{(\zeta)}_t(B,B')= \frac{\nu(B)}{\nu(B')}\cdot D^{(\zeta)}_\infty (\omega)
		$$ for any $\omega \in \Omega^{(\zeta)}$ and also that
		$$
		\lim_{t \rightarrow +\infty} \nu^{(\zeta)}_t(B,B')(\omega) = \frac{\nu(B)}{\nu(B')}
		$$ for any $\omega \in \Omega^{(\zeta)} \cap \Theta^{(\zeta)}$.
	\end{itemize}
\end{theorem}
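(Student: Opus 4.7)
My plan is to reduce all four assertions to the general framework of Section \ref{sec:results}, invoking Proposition \ref{prop:lyapunov1} to obtain (i) and the $L^2$ part of (ii), Theorem \ref{theo:main3} together with Corollary \ref{cor:convnu} to obtain (iii) and the probability convergences in (ii), and Theorem \ref{theo:main5} for (iv). The $\lambda$-positivity of $X$ in the subcritical regime $\gamma < \gamma_c$, with eigenfunction $h$ and finite left eigenmeasure $\nu$ charging every $\zeta \in J$, is supplied by \cite{ferrari1996, andjel2015}; the irreducibility condition (B3) follows from the classical irreducibility of $X$ on the countable class $J$; and the continuity of $h$ required by Theorem \ref{theo:main5} is automatic because $J$ is discrete.

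The principal technical step is to construct a Lyapunov functional $V : J \to \R_{\geq 0}$ satisfying (V1)--(V4). I would take $V(\zeta) := C^{|\sigma_\zeta|}$ with $C > 1$ chosen large enough that $\|h/(1+V)\|_\infty < +\infty$; the (at most exponential) growth of $h$ in $|\sigma_\zeta|$ that follows from the estimates in \cite{andjel2015} ensures that such a $C$ exists, so that (V4) holds. For (V3), since in the subcritical regime $h(\zeta) \to 0$ as $|\sigma_\zeta| \to \infty$, the level set $J_n$ consists of configurations of uniformly bounded size, on which $V$ is bounded. For (V2), I would compute the action of the generator of the $h$-transformed process on $V$: the subcriticality of the original contact process produces a mean downward drift of $|\sigma|$ away from atypical configurations of large size, yielding the required geometric contraction inequality. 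For (V1), the classical graphical coupling of contact processes merges two configurations (modulo translations) within a fixed time with probability bounded below, uniformly over any $V$-bounded pair of initial states. Proposition \ref{prop:lyapunov1} then yields (A2), (B1), (B2), and hence assertion (i) together with the $L^2$ convergence in (ii) restricted to $B, B' \in \mathcal{C}_X$ (a class that includes all finite subsets of $J$).

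The main obstacle is strong supercriticality. By (V3) combined with the bound \eqref{eq:compvc}, $\sup_{\zeta \in J_n} \Phi_\zeta < +\infty$ for each fixed $n$. My plan is to argue via the spine decomposition underlying $D^{(\zeta)}$ provided by the many-to-one lemma of Section \ref{sec:mtf}: under the tilted measure with density $D_t^{(\zeta)}$, a distinguished line of descent moves as the ergodic $h$-transformed process $\tilde{X}$ with invariant measure $\mu$, which assigns positive mass to each $J_n$. Hence the spine visits $J_n$ infinitely often almost surely under the tilt, so $\min_{u \in \overline{\xi}_t} \Phi_{u_t} \leq \sup_{J_n} \Phi_\zeta < +\infty$ along an infinite subsequence. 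Translating back to $P_\zeta$ this yields $\Gamma^{(\zeta)} \cap \{D_\infty^{(\zeta)} > 0\} = \emptyset$ almost surely; the remaining step, namely ruling out $\Gamma^{(\zeta)} \cap \Theta^{(\zeta)} \cap \{D_\infty^{(\zeta)} = 0\}$, follows from the irreducibility (B3) combined with the fixed-point equation $\sigma = G(\sigma)$ from Proposition \ref{prop:fp}: using that $\sigma \geq \eta$ and that $\sigma \neq \mathbf{1}$ (since $\E_\zeta(D_\infty) = 1$), a standard iteration of $G$ along the branching tree, exploiting irreducibility to propagate positivity between configurations, forces $\sigma \equiv \eta$. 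Theorem \ref{theo:main3} then proves strong supercriticality, and Corollary \ref{cor:convnu} supplies the positivity of $D^{(\zeta)}_\infty$ on $\Theta^{(\zeta)}$, completing (ii) and (iii).

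For the almost sure statements in (iv), it remains to check $\overline{\Phi}_\zeta < +\infty$. A second-moment strengthening of (V2), which follows by enlarging $C$ in the definition of $V$, shows that $\tilde{\E}_\zeta((1+V(X_s))^2)$ grows at most exponentially in $s$ with a rate strictly less than $r(m_1-1) - \lambda$ (this margin is provided by (I2) together with the subcriticality of $X$), and combined with (V4) this controls the integrand in the definition of $\overline{\Phi}_\zeta$. Theorem \ref{theo:main5} then yields (iv) for $B, B' \in \mathcal{C}_X$; the condition $\nu(\partial B) = \nu(\partial B') = 0$ is automatic on the discrete space $J$. Finally, to extend from $B \in \mathcal{C}_X$ (which contains all finite subsets) to arbitrary $B, B' \subseteq J$ with $B' \neq \emptyset$, I would write $B = \bigcup_N B_N$ as an increasing union of finite subsets, use finiteness of $\nu$ to get $\nu(B \setminus B_N) \to 0$, and apply dominated convergence together with $\xi_t(J) < +\infty$ a.s.\ to propagate both the $L^2$ and almost sure limits from the finite approximants to the full set.
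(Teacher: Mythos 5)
Your high-level skeleton (reduce to Proposition \ref{prop:lyapunov1}, Theorem \ref{theo:main3}/Corollary \ref{cor:convnu}, Theorem \ref{theo:main5}, plus a Lyapunov functional check) matches the paper's. However, there are several substantive problems in the details.

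First, a factual error: you assert that $h(\zeta) \to 0$ as $|\sigma_\zeta| \to \infty$. This is backwards. The paper cites \cite[Lemma 2.9]{sturm2014} for the two-sided bound $c_1|\zeta| \leq h(\zeta) \leq c_2|\zeta|$, so $h(\zeta) \to +\infty$ as $|\sigma_\zeta| \to \infty$ (larger configurations are more likely to survive). The conclusion you draw --- that $J_n$ consists of configurations of uniformly bounded size --- happens to survive, but only because the \emph{upper} constraint $h(\zeta)\le n$ forces $|\sigma_\zeta|\le n/c_1$, not because $h$ vanishes.

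Second, and more seriously, the exponential Lyapunov choice $V(\zeta) = C^{|\sigma_\zeta|}$ with $C>1$ is unlikely to satisfy (V2) throughout the whole subcritical regime. A back-of-the-envelope computation of $\tilde{\mathcal{L}}(V)$ gives a leading term of order $|\sigma_\zeta|\,V(\zeta)\big[(1/C - 1) + 2d\gamma(C-1)\big]$, which is nonpositive only if $C < 1/(2d\gamma)$; combined with $C>1$ this demands $2d\gamma < 1$, a condition that can fail for $\gamma$ near $\gamma_c$. Even more basically, (V2) forces $\mu(V) < +\infty$, i.e. exponential integrability of $|\sigma|$ under the $Q$-process invariant measure, which is not established. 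The paper sidesteps all of this by taking $V := h$ itself: the two-sided bound $c_1|\zeta|\le h(\zeta)\le c_2|\zeta|$ gives (V3)--(V4) trivially, and (V2) follows from the coupling inequality $h(\zeta) \leq h(\langle\sigma\cup\{x\}\rangle) \leq h(\zeta) + h(\langle\{0\}\rangle)$, which linearizes $\tilde{\mathcal{L}}(h)$ and yields $\tilde{\mathcal{L}}(h)(\zeta) \leq -\lambda h(\zeta) + \text{const}$.

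Third, the paper's decisive observation for strong supercriticality is completely missed: since $h$ is a Lyapunov functional and $\inf_{\zeta\in J} h(\zeta) > 0$ (because $h(\zeta)\geq c_1\ge c_1$ for all $\zeta$), the bound \eqref{eq:compvc} with $V=h$ gives $\Phi_\zeta \leq C(1+h(\zeta))/h(\zeta) \leq C(1 + 1/c_1)$, hence $\sup_{\zeta\in J}\Phi_\zeta < +\infty$. This immediately makes $\tilde{J}_n = J$ for $n$ large and Proposition \ref{prop:ss1} gives strong supercriticality in one line --- no spine argument is needed, and the spine argument you sketch has the gap of translating the tilted-measure recurrence of the spine back to positivity on $\Theta^{(\zeta)}$ rather than just on $\{D_\infty > 0\}$, which is exactly the nontrivial content of Theorem \ref{theo:main3} rather than an input to it. Similarly, $\inf h > 0$ gives $\mathcal{C}_X = \B_J$ directly, so your final monotone-approximation step to extend the $L^2$ and almost-sure limits from finite $B$ to arbitrary $B\subseteq J$ is unnecessary (and for the almost-sure limits would require additional justification that is not spelled out). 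The $\overline{\Phi}_\zeta < +\infty$ check in the paper is done with $V = h$ via the bound $\tilde{\mathcal{L}}(h^2)(\zeta) \leq C'(1+h(\zeta))$, not by enlarging an exponent.
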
 

\subsection{Recurrent Ornstein-Uhlenbeck process killed at $0$} \label{sec:rou}

Consider a $1$-dimensional recurrent Ornstein-Ulhenbeck process which is killed at $0$, i.e. the stopped process $X=(X_t)_{t \geq 0}$ on $\R_{\geq 0}$ defined as $X_t:=Y_{t \wedge \tau_0}$, where $\tau_0:=\inf\{t \geq 0 : Y_t = 0\}$ and $Y$ is given by SDE 
$$
dY_t = -\lambda Y_t dt + dB_t, 
$$ for $B$ a standard ($1$-dimensional) Brownian motion and $\lambda > 0$ a fixed parameter called the \textit{drift}. 
The generator of $X$ has as domain the set of $C^2$-functions vanishing at $0$ (due to the killing at $0$) and is defined for any such $f$ as 
$$
\mathcal{L}(f)(x):= \frac{1}{2}f''(x) - \lambda xf'(x).
$$ 
It is well-known, see \cite{lladser2000}, that $X$ is $\lambda$-positive with eigenfunction $h(x):= \sqrt{\frac{4\lambda}{\pi}} x$ (when $h,\nu$ are normalized so that $\nu$ is a probability measure and $\nu(h)=1$) and eigenmeasure $\nu$ having density 
$
f_X(x):= 2\lambda xe^{-\lambda x^2} \mathbbm{1}_{{(0,+\infty)}}(x)
$ with respect to the Lebesgue measure $l$ on $\R$. We state our results for this model below. 

\begin{theorem} Let $X$ be a recurrent Ornstein-Uhlenbeck process with drift $\lambda > 0$ and killed at $0$. Then, for any pair $(m,r)$ satisfying Assumptions \ref{assumpG0}, the following assertions hold for each $x > 0$:
	\begin{itemize}
		\item [i.]  The martingale $D^{(x)}$ converges almost surely and in $L^2$ to some random variable $D^{(x)}_\infty \in L^2$ which is strictly positive on the event $\Theta^{(x)}$ of non-extinction and, moreover, satisfies 
		$$
		\E_x(D_\infty)=1 \hspace{2cm}\text{ and }\hspace{2cm}\E_x(D_\infty^2)=\Phi_x.
		$$ 
		\item [ii.] For all pairs $B,B' \subseteq \B_{(0,+\infty)}$ with $l(B')>0$ we have as $t \rightarrow +\infty$
		$$
		W^{(x)}_t(B,B') \overset{L^2}{\longrightarrow} \frac{\nu(B)}{\nu(B')} \cdot D^{(x)}_\infty
		$$ and, conditionally on the event of non-extinction, also that 
		$$
		\nu^{(x)}_t(B,B') \overset{P}{\longrightarrow} \frac{\nu(B)}{\nu(B')}.
		$$
		\item [iii.] $\xi^{(x)}$ is strongly supercritical. In particular, the associated moment generating operator $G$ has exactly two fixed points, $\eta$ and $\mathbf{1}$. 
		\item [iv.] There exists a full $P$-measure set $\Omega^{(x)}$ such that for all pairs of subsets $B,B' \subseteq \B_{(0,+\infty)}$ bounded away from $0$ which satisfy $l(\partial B)=l(\partial B')=0$ and $l(B')> 0$ one has
		$$
		\lim_{t \rightarrow +\infty} W^{(x)}_t(B,B')= \frac{\nu(B)}{\nu(B')}\cdot D^{(x)}_\infty (\omega)
		$$ for any $\omega \in \Omega^{(x)}$ and also that 
		$$
		\lim_{t \rightarrow +\infty} \nu^{(x)}_t(B,B')(\omega) = \frac{\nu(B)}{\nu(B')}
		$$ for any $\omega \in \Omega^{(x)} \cap \Theta^{(x)}$.
	\end{itemize}
\end{theorem}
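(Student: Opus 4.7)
The strategy is to cast this model into the general framework of Section \ref{sec:rpos1} and verify the hypotheses of Proposition \ref{prop:lyapunov1}, Theorem \ref{theo:main3}, and Theorem \ref{theo:main5} in turn; items (i)-(iv) will then follow mechanically. The $\lambda$-positivity with the stated $h$ and $\nu$ is classical (see \cite{lladser2000}), so the real work is (a) producing a Lyapunov functional for $X$ under $\tilde P$, (b) checking (B2)-(B3), (c) proving strong supercriticality, and (d) verifying the integrability $\overline{\Phi}_x < +\infty$.

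A direct computation with $h(x)=cx$, $c=\sqrt{4\lambda/\pi}$, shows that the Doob $h$-transform turns $X$ into the diffusion with generator
$$
\tilde{\mathcal{L}}f(x) = \tfrac{1}{2}f''(x) + \Bigl(\tfrac{1}{x} - \lambda x\Bigr)f'(x),
$$
which is ergodic on $(0,+\infty)$ with invariant probability measure $\mu(dx) \propto x^2 e^{-\lambda x^2}\,dx$. I would then take $V(x):=x^2$ as candidate Lyapunov functional. Applying $\tilde{\mathcal{L}}$ gives $\tilde{\mathcal{L}}V(x) = 3 - 2\lambda V(x)$, which via Dynkin's formula yields (V2) with $\gamma=2\lambda$ and $K=3/(2\lambda)$. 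Condition (V3) is immediate since each $J_n=[\tfrac{1}{cn},\tfrac{n}{c}]$ is compact, and (V4) reduces to the boundedness of $x\mapsto cx/(1+x^2)$ on $(0,+\infty)$. For (V1), the minorization follows from a standard Doeblin-type argument: on any compact sub-interval of $(0,+\infty)$ the sub-transition density of the one-dimensional diffusion under $\tilde{P}_x$ is jointly continuous and strictly positive, forcing a uniform lower bound for all starting points $x,y$ with $V(x)+V(y)\le R$. Combined with the continuity of $h$, Proposition \ref{prop:lyapunov1} now gives the $L^2$ parts of (i) and (ii).

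Next, (B2) is immediate: any Borel $B\subseteq(0,+\infty)$ with $\nu(B)>0$ contains $B^{*}:=B\cap[\varepsilon,1/\varepsilon]\in\mathcal{C}_X$ with $\nu(B^{*})>0$ for $\varepsilon$ small enough. The irreducibility (B3) is inherited from the positivity and joint continuity of the sub-transition density of $X$ away from the absorbing point $0$. The key point is then \emph{strong supercriticality}; by Theorem \ref{theo:main3} it suffices to exhibit $\eta(x)=\sigma(x)$. Taking $B:=[\varepsilon,1/\varepsilon]$ with $\nu(B)>0$, the $L^{2}$-convergence in Theorem \ref{theo:main} already yields $P_{x}(\xi_t(B)\not\to 0)>0$ since $\E_x(D_\infty)=1$. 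To upgrade this to the full non-extinction probability, I would invoke the spinal representation from the many-to-one lemma: the spine has law $\tilde{P}_{x}$ and is therefore positive recurrent, visiting $B$ at arbitrarily large times a.s.; pairing this with the branching property (restarting the dynamics from each visit of the spine to $B$) and a $0$-$1$ argument on the event $\{\limsup_{t}\xi_{t}(B)>0\}\subseteq\Theta^{(x)}$, one obtains the reverse inclusion and hence $\eta(x)=\sigma(x)$. This spine-plus-branching step is what I expect to be the main obstacle.

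Finally, for (iv) I would use the higher-order Lyapunov $V_k(x):=x^k$: the computation $\tilde{\mathcal{L}}V_k(x) = \tfrac{k(k+1)}{2}x^{k-2} - k\lambda V_k(x)$ yields, by Gronwall, $\tilde{\E}_{x}(X_s^{k})\leq e^{-\gamma_k s}x^{k}+C_k$ uniformly in $s\geq 0$. Specializing to $k=3$ gives $\tilde{\E}_{x}(h(X_s)(1+V(X_s)))=c\,\tilde{\E}_{x}(X_s+X_s^{3})\leq C(1+x^{3})$, whence using (I2),
$$
\overline{\Phi}_x \leq \frac{C(m_2-m_1)r(1+x^{3})}{cx}\int_{0}^{\infty}e^{-(r(m_1-1)-\lambda)s}\,ds < +\infty.
$$
Theorem \ref{theo:main5} then delivers (iv). Apart from the promotion of strong supercriticality to the equality $\eta=\sigma$, the entire proof is a routine checklist built on the single Lyapunov functional $V(x)=x^{2}$.
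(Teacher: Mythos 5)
Your overall plan — cast the model into the $\lambda$-positive framework, verify a Lyapunov functional, check (B2)–(B3) and strong supercriticality, and then invoke Proposition~\ref{prop:lyapunov1}, Theorem~\ref{theo:main3} and Theorem~\ref{theo:main5} — is the right one, and your choice $V(x)=x^2$ is a genuine and slightly cleaner alternative to the paper's choice $V=h$. For $V=h$ the drift inequality~\eqref{eq:v3} \emph{fails} ($\tilde{\mathcal{L}}h(x)=c/x-\lambda h(x)$), so the paper has to prove (V2) by hand from the explicit formula for $P_x(X_t>0)$; with $V=x^2$ you do get $\tilde{\mathcal{L}}V=3-2\lambda V$ and (V2) follows from Proposition~\ref{prop:checkv} directly. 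That is a real simplification.

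However, there are two gaps. First, your justification of (V1) is too quick. The sublevel set $\{x: V(x)\le R\}=(0,\sqrt R]$ is \emph{not} contained in any compact sub-interval of $(0,+\infty)$; it abuts the absorbing boundary. "Joint continuity and strict positivity on compact sub-intervals" therefore does not yield a uniform minorization over $\{V(x)+V(y)\le R\}$. One must actually use the explicit density \eqref{eq:densityrou}–\eqref{eq:eeff} and observe that the apparent $1/x$ singularity in $\tilde f_t(x,y)=\frac{y}{x}e^{\lambda t}f_t(x,y)$ is cancelled by the $\sinh(xy/(e^{-\lambda t}\tau(t)))\sim x\cdot(\cdot)$ behaviour, so $\tilde f_t$ extends continuously (and positively) to $x=0$. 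This is exactly what the paper does.

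Second, and more seriously, your strong-supercriticality step has a genuine hole. The spine under the size-biased measure $Q_x$ (with $\mathrm{d}Q_x/\mathrm{d}P_x|_{\F_t}=D_t$) is positive recurrent, and restarting from its visits to $B$ gives $Q_x(\limsup_t\xi_t(B)>0)=1$; but this only translates to the statement that $\{D_\infty^{(x)}>0\}\subseteq\{\limsup_t\xi_t(B)>0\}$ $P_x$-a.s., i.e.\ the ``easy'' inclusion. What is actually needed for $\eta=\sigma$ is the reverse containment $\Theta^{(x)}\subseteq\{D^{(x)}_\infty>0\}$ (equivalently $P_x(\Gamma\,|\,\Theta)=0$), and the event $\Theta^{(x)}\cap\{D^{(x)}_\infty=0\}$ is precisely where the change of measure degenerates — the spine argument is silent there. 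No routine $0$--$1$ argument closes this because $\Theta$ is not a tail event and the candidate fixed point $g_B$ of $G$ is not forced to equal $\eta$ merely by being strictly less than $\mathbf{1}$. The paper instead proves $\limsup_t\max_{u\in\overline\xi_t}u_t>0$ a.s.\ on $\Theta^{(x)}$, by (a) lower-bounding $P_x(\Theta^c\,|\,\F_t)$ via the probability that every particle hits $0$ before branching, deducing $\sum_{u}u_t\to+\infty$ a.s.\ on $\Theta$, and (b) ruling out confinement in any strip $(0,M)$ via a two-sided exit estimate; this uses the specific one-dimensional BBM-with-absorption structure (adapted from \cite[Theorem~9]{harris2006v1}) and is the essential missing ingredient in your proposal.

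The remainder of the proposal ((B2)–(B3), $\overline\Phi_x<\infty$ via $V_k(x)=x^k$ and Gronwall) is correct and parallels the paper.
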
 

\subsection{Transient Ornstein-Uhlenbeck process} \label{sec:tou}
This example was considered originally in \cite{englander2010}. Let $X$ be the process with generator $\mathcal{L}$ defined for any $f \in C^2(\R)$ as 
$$
\mathcal{L}(f)(x):= \frac{1}{2}\sigma^2 f''(x) +\lambda xf'(x),
$$ 
where $\lambda,\sigma^2 > 0$ are called the \textit{drift} and \textit{dispersion} coefficients, respectively. In this case, one can check that $X$ is $\lambda$-positive with $h(x) \propto \exp\{-\frac{\lambda}{\sigma^2}x^2\}$ and $\nu$ given by the Lebesgue measure on $\R$. Unlike previous examples, here $\nu$ is an infinite measure so that, in particular, the asymptotics in \eqref{A2} does not hold for every $B \in \B_{\R}$ (for example, it does not hold if $B=\R$ because in this case \eqref{A2} cannot decay exponentially) but, by the transience of $X$, it does hold for any $B$ which is bounded. Still, this is enough to yield the following results.

\begin{theorem}\label{theo:rou} Let $X$ be a transient Ornstein-Uhlenbeck process with coefficients $\lambda,\sigma^2 > 0$. Then, for any pair $(m,r)$ satisfying Assumptions \ref{assumpG0}, the following assertions hold for each $x \in \R$:
	\begin{itemize}
		\item [i.]  The martingale $D^{(x)}$ converges almost surely and in $L^2$ to some random variable $D^{(x)}_\infty \in L^2$ which satisfies 
		$$
		\E_x(D_\infty)=1 \hspace{2cm}\text{ and }\hspace{2cm}\E_x(D_\infty^2)=\Phi_x.
		$$ 
		\item [ii.] For all pairs $B,B' \subseteq \B_{\R}$ of bounded sets with $l(B')> 0$ we have as $t \rightarrow +\infty$
		$$
		W^{(x)}_t(B,B') \overset{L^2}{\longrightarrow} \frac{\nu(B)}{\nu(B')} \cdot D^{(x)}_\infty
		$$ and, conditionally on the event of non-extinction, also that 
		$$
		\nu^{(x)}_t(B,B') \overset{P}{\longrightarrow} \frac{\nu(B)}{\nu(B')}.
		$$
		\item [iii.] $\xi^{(x)}$ is \textbf{not} strongly supercritical. In fact, $\eta(x)=0 < \sigma(x) < 1 = \mathbf{1}(x)$ so that $\sigma \neq \eta,\mathbf{1}$.
		\item [iv.] There exists a full $P$-measure set $\Omega^{(x)}$ such that for all pairs $B,B' \subseteq \B_{\R}$ of bounded sets such that $l(\partial B)=l(\partial B')=0$ and $l(B')> 0$ one has $$
		\lim_{t \rightarrow +\infty} W^{(x)}_t(B,B')= \frac{\nu(B)}{\nu(B')}\cdot D^{(x)}_\infty (\omega) $$ for any $\omega \in \Omega^{(x)}$ and also that
		\begin{equation}\label{eq:touconv}
		\lim_{t \rightarrow +\infty} \nu^{(x)}_t(B,B')(\omega) = \frac{\nu(B)}{\nu(B')}
		\end{equation} for any $\omega \in \Omega^{(x)} \cap \Lambda^{(x)}$ (observe that, due to the lack of strong supercriticality of $\xi^{(x)}$ here, we obtain  \eqref{eq:touconv} only on the event $\Omega^{(x)} \cap \Lambda^{(x)}$, which now is strictly contained in $\Omega^{(x)} \cap \Theta^{(x)}$ and, in fact, has a smaller probability by (iii)).
	\end{itemize}
\end{theorem}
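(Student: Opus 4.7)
The plan is to verify Assumptions \ref{assumpG} (with $\mathcal{C}_X$ taken to be the class of bounded Borel subsets of $\R$), \ref{assumpG0} and \ref{assumpG2} for the transient Ornstein--Uhlenbeck process via Proposition \ref{prop:lyapunov1}, read (i), (ii) and (iv) off Theorems \ref{theo:main2}, \ref{theo:main} and \ref{theo:main5}, and then treat (iii) as the genuinely new step where transience forces the failure of strong supercriticality.

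A direct computation shows that the Doob $h$-transform turns $X$ into the recurrent OU $dX_t=-\lambda X_t\,dt+\sigma\,dB_t$, whose invariant measure $\mu$ is Gaussian with variance $\sigma^2/(2\lambda)$. I would take $V(x):=x^2$ as a geometric Lyapunov functional under $\tilde P$: the explicit identity $\tilde{\E}_x(X_t^2)=e^{-2\lambda t}x^2+\frac{\sigma^2}{2\lambda}(1-e^{-2\lambda t})$ immediately supplies (V2), while (V1) follows from a synchronous coupling of the driving Brownian motions of two copies. Since $h(x)=C\exp(-\lambda x^2/\sigma^2)$ is bounded, (V4) is automatic, and (V3) holds because $J_n=\{1/n\leq h\leq n\}$ is a bounded interval. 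Proposition \ref{prop:lyapunov1} then yields Assumptions \ref{assumpG} together with the estimate $\Phi_x\leq C(1+x^2)\exp(\lambda x^2/\sigma^2)<+\infty$, from which (i) and (ii) follow directly. Assumption (B2) is immediate (any Borel set of positive Lebesgue measure contains a bounded subset of positive Lebesgue measure) and (B3) is a consequence of the everywhere-positive Gaussian transition density, so Theorem \ref{theo:main3} applies. For (iv), the bound $\overline{\Phi}_x<+\infty$ reduces to integrating the bounded continuous function $(1+y^2)h(y)$ against the Gaussian transition densities of the recurrent OU times the integrable weight $e^{-(r(m_1-1)-\lambda)s}$ (integrability from (I2)).

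The genuinely new step is (iii), the failure of strong supercriticality. Since the motion has no absorbing state, extinction is possible only through the offspring distribution, and the normalization implicit in the statement $\eta\equiv 0$ simply corresponds to $P(m=0)=0$; on the other hand $\sigma(x)<1$ is immediate from $\E_x(D_\infty^{(x)})=1$, supplied by (i). It remains to prove $\sigma(x)>0$ for every $x\in\R$, equivalently (by Theorem \ref{theo:main3}) that $P_x(\Gamma)>0$, i.e.\ that with positive probability every surviving lineage eventually leaves the region where $\Phi$ is bounded. The decisive observation is that under $P_x$ each particle satisfies $u_t/e^{\lambda t}\to C_u+\sigma\int_0^\infty e^{-\lambda r}dB^u_r$ almost surely, a nonzero Gaussian random variable; hence every individual lineage drifts ballistically to $\pm\infty$ and its $\Phi_{u_t}$ blows up like $\exp(\lambda u_t^2/\sigma^2)$. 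The subtlety --- and the main obstacle --- is that branching produces exponentially many particles: the many-to-one lemma gives $\E_x\xi_t([-R,R])\sim e^{(r(m_1-1)-\lambda)t}h(x)\nu([-R,R])$ for any fixed window, which is large in expectation, yet one must still show that $\xi_t([-R,R])$ can vanish for all large $t$ on a set of positive probability. The route I would follow is that of \cite{englander2010}: fix $R$ large, apply Markov-type bounds along a well-chosen time grid together with the spatial independence of descendants (from the branching property) to decouple trajectories, and deduce that with positive probability all descendants have left $[-R,R]$ by some finite time and never return, which combined with the monotone outward drift of the OU secures $P_x(\Gamma)>0$.
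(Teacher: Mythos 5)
Your verification of Assumptions \ref{assumpG}, \ref{assumpG0}, \ref{assumpG2} via the Lyapunov functional $V(x)=x^2$ is exactly the paper's route: the paper (Section \ref{sec:tou2}) also notes that $h$ itself fails (V1) because it is bounded, takes $V=x^2$, verifies (V2) from $\tilde{\mathcal L}(V)=\sigma^2-2\lambda V$, and gets (V1) from the explicit Gaussian transition density \eqref{eq:dentou}. Your observation that $(1+V)h$ is bounded, so $\overline{\Phi}_x<\infty$ follows immediately from integrability of the exponential weight, is a slightly more direct way of seeing what the paper gets from the general criterion in Section 8.4, and is fine. Parts (i), (ii) and (iv) then follow from Theorems \ref{theo:main2}, \ref{theo:main} and \ref{theo:main5} just as you say.

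The real issue is part (iii), which is the genuinely new content of this theorem, and your argument there is an outline rather than a proof. Via Theorem \ref{theo:main3} you need either to show $P_x(\Gamma)>0$ or to exhibit a third fixed point of $G$; both are equivalent, but the concrete estimate behind either one is the same hard step, and you do not supply it. Pointing out that each individual lineage satisfies $u_t e^{-\lambda t}\to$ a nonzero Gaussian does not control the \emph{minimum} of $\Phi_{u_t}$ over the exponentially many particles alive at time $t$, and ``applying Markov-type bounds along a time grid together with the spatial independence of descendants'' is a restatement of the goal, not an argument: the subtrees are independent given their root positions, but the number of roots grows like $e^{r(m_1-1)t}$ while the per-particle escape probability from a window $[-R,R]$ over a bounded time is only bounded away from $1$, so a naive union/independence bound does not close. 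What the paper actually does (Section 8.3.2) is exhibit the explicit fixed point $g(x)=P_x(\limsup_t \xi_t((-\infty,0])=0)$, show $g(x)<1$ for all $x$ via Theorem \ref{theo:main}, and show $g(x)>1/2$ for all $x$ large by coupling $\xi^{(x)}$ below with a branching Brownian motion with constant drift $\gamma^*>0$ (valid until the OU drops below $\gamma^*/\lambda$) and invoking a known result (\cite[Theorem 4.17]{bocharov2012branching}) that for such a drifted branching Brownian motion started high enough the leftmost particle stays above $x/2$ forever with probability $>1/2$; Proposition \ref{prop:fpG} then propagates $0<g<1$ to all $x$. That comparison theorem, or something of equivalent strength controlling the minimal particle of an unbounded branching system, is the missing ingredient in your sketch.

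One more small point: you correctly flag that the asserted $\eta\equiv 0$ in the statement amounts to $P(m=0)=0$; the paper states it as a consequence of ``no absorption'' without comment. You are right to notice this, but since you are not disputing it, only making the assumption explicit, this is not a gap in your argument.
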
 
Parts (i) and (iv) of Theorem \ref{theo:rou} can also be found in \cite{englander2010} (together with the $L^1$ convergence of $W^{(x)}(B,B')$), whereas (ii) and (iii) are new results. On the other hand, we note that it follows from (iv) that for any compact set $\mathcal{K} \subseteq \R$ and $x \in \R$ we have 
$$
P_x\left( \limsup_{t \rightarrow +\infty} \xi_t(\mathcal{K}) > 0\right) \geq P_x (\Lambda) > 0
$$ but, however, from (iii) that $\xi^{(x)}$ is not strongly supercritical. This confirms our statement about the notion of local survival introduced in \eqref{eq:ls} being, in general, weaker than strong supercriticality.

\subsection{Brownian motion with drift killed at the origin} \label{sec:bbm}

Finally, we conclude with an example of an underlying motion which is not $\lambda$-positive. Consider a Brownian motion with negative drift $-c < 0$ killed at the origin, i.e. the process $X$ given by the generator 
$$
\mathcal{L}(f)(x):=\frac{1}{2}f''(x) - c f'(x)
$$ defined for all $C^2$-functions $f$ vanishing at $0$. It is shown in \cite{polak2012} that $X$ satisfies Assumptions \ref{assumpG} for $\lambda:=\frac{c^2}{2}$ and $\mathcal{C}_X:=\B_{(0,+\infty)}$, the class of Borel subsets of $(0,+\infty)$, and with $h(x):= \frac{1}{\sqrt{2\pi \lambda^2}} xe^{cx}$, $p(t):=t^{-\frac{3}{2}}$ and $\nu$ given by the density $f_X(x):=2\lambda x e^{-cx}\mathbbm{1}_{(0,+\infty)}(x)$ with respect to Lebesgue. However, $X$ is not a $\lambda$-positive motion since one can easily verify that $\nu(h)=+\infty$.\footnote{Nor is it $\lambda'$-positive for any other $\lambda'\neq \frac{c^2}{2}$ because otherwise \eqref{A2} would not hold for $\lambda=\frac{c^2}{2}$ and all $B \in \B_{(0,+\infty)}$.} Nonetheless, our approach still applies and thus we can obtain the following results.

\begin{theorem} \label{theo:bbm} Let $X$ be a Brownian motion with a negative drift $-c < 0$ killed at the origin.
	Then, for any pair $(m,r)$ satisfying Assumptions \ref{assumpG0}, the following assertions hold for any $x> 0$: 
	\begin{itemize}
		\item [i.]  $D^{(x)}$ is bounded in $L^2$ if and only if $r(m_1-1) > c^2=2\lambda$. In this case, $D^{(x)}$ converges almost surely and in $L^2$ to some random variable $D^{(x)}_\infty \in L^2$ which is strictly positive on the event $\Theta^{(x)}$ of non-extinction and, moreover, satisfies 
		$$
		\E_x(D_\infty)=1 \hspace{2cm}\text{ and }\hspace{2cm}\E_x(D_\infty^2)=\Phi_x.
		$$ 
		\item [ii.] For all pairs $B,B' \in \B_{(0,+\infty)}$ with $l(B')> 0$, the sequence $W^{(x)}(B,B')$ is bounded in $L^2$ if and only if $r(m_1-1)> c^2=2\lambda$. In this case, we have 
		$$
		W^{(x)}_t(B,B') \overset{L^2}{\longrightarrow} \frac{\nu(B)}{\nu(B')} \cdot D^{(x)}_\infty
		$$ and, conditionally on the event of non-extinction, also that 
		$$
		\nu^{(x)}_t(B,B') \overset{P}{\longrightarrow} \frac{\nu(B)}{\nu(B')}.
		$$
		\item [iii.] $\xi^{(x)}$ is strongly supercritical. In particular, the associated moment generating operator $G$ has exactly two fixed points, $\eta$ and $\mathbf{1}$. 
	\end{itemize}
\end{theorem}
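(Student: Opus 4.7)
The strategy is to apply the general framework developed in Theorems \ref{theo:main2}, \ref{theo:main} and \ref{theo:main3} together with Corollary \ref{cor:convnu}. By \cite{polak2012}, Assumptions \ref{assumpG} hold for the underlying motion $X$ with the stated $\lambda$, $h$, $p$, $\nu$ and with $\mathcal{C}_X=\B_{(0,+\infty)}$, so these general results are at our disposal. The whole problem then reduces to two concrete tasks: (a) computing the large-$s$ asymptotics of $\E_x(M_s^2)$ precisely enough to identify when $\Phi_x$ from Theorem \ref{theo:main2} is finite, and (b) verifying that the branching process is strongly supercritical in the sense of Definition \ref{def:ss}.

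For task (a), the plan is to pass to the Doob $h$-transform $\tilde{P}_x$: a direct computation of the transformed generator shows that under $\tilde{P}_x$ the motion $X$ is a three-dimensional Bessel process started at $x$ (its generator becomes $\tfrac{1}{2}\partial_{xx}+\tfrac{1}{x}\partial_x$). Using the identity
$$
\E_x(M_s^2)=\frac{e^{\lambda s}}{h(x)}\,\tilde{\E}_x(h(X_s))
$$
together with the explicit Bessel transition density
$$
\tilde{p}_s(x,y)=\frac{y}{x\sqrt{2\pi s}}\bigl[e^{-(y-x)^2/(2s)}-e^{-(y+x)^2/(2s)}\bigr],
$$
I evaluate $\tilde{\E}_x(X_s e^{cX_s})$ by rewriting the bracket as $2e^{-(y^2+x^2)/(2s)}\sinh(xy/s)$, completing the square in the two resulting Gaussian integrals centred at $\pm cs+x$, and tracking the leading order in $s$. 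After a leading-order cancellation between the two Gaussians, the surviving term yields
$$
\E_x(M_s^2)\sim A(x)\,s^2 e^{c^2 s}\quad\text{as }s\to+\infty,\qquad A(x):=\frac{2c^2\sinh(cx)}{x^2 e^{cx}}>0.
$$
Plugging this into the definition of $\Phi_x$ shows that $\Phi_x<+\infty$ if and only if $\int^{\infty} s^2 e^{(c^2-r(m_1-1))s}\,ds<+\infty$, that is, if and only if $r(m_1-1)>c^2=2\lambda$. Parts (i) and the $L^2$-part of (ii) then follow at once from Theorems \ref{theo:main2} and \ref{theo:main}.

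For task (b), I first check hypotheses (B1)--(B3) of Assumptions \ref{assumpG2}: (B1) is the finiteness of $\Phi_x$ just established; (B2) is immediate because $\mathcal{C}_X=\B_{(0,+\infty)}$, so any $B$ with $\nu(B)>0$ contains some $B\cap[1/n,n]\in\mathcal{C}_X$ of positive $\nu$-mass; and (B3) is standard irreducibility of the three-dimensional Bessel process on $(0,+\infty)$. From the explicit $A(x)$ one sees that $\Phi_y$ is bounded on $[a,+\infty)$ for every $a>0$ and $\Phi_y\sim\mathrm{const}/y$ as $y\to 0^+$, so the event $\Gamma^{(x)}$ of Definition \ref{def:ss} is precisely $\{|\xi_t|>0\text{ for all }t,\ \max_u u_t\to 0\}$. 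To rule out $\Gamma^{(x)}\cap\Theta^{(x)}$, the key ingredient is the sharp boundary asymptotics $1-\eta(y)\asymp y$ as $y\to 0^+$: the bound $\eta(y)\leq 1-c_1 y$ follows from Cauchy--Schwarz applied to $\E_y(D_\infty)=1$, giving $P_y(\Theta)\geq 1/\Phi_y$, while the matching bound $\eta(y)\geq 1-c_2 y$ comes from the first-passage estimate $\eta(y)\geq \tilde{\E}_y(e^{-r\tau_0})=e^{-y(c+\sqrt{c^2+2r})}$, valid because a single particle can survive only by branching before being absorbed at $0$. With this estimate in hand, I verify the characterisation (i) of Theorem \ref{theo:main3} by iterating the fixed-point identity $f=G(f)$ along the lineages of the branching tree and using the boundary behaviour $\eta(y)\to 1$ to rule out fixed points other than $\eta$ and $\mathbf{1}$, which forces $\sigma(x)=\eta(x)$. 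Statement (iii) follows, and Corollary \ref{cor:convnu} then supplies the conditional-on-$\Theta^{(x)}$ convergence in probability needed to complete (ii).

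The main obstacle is the strong supercriticality in (iii). Because the underlying motion is \emph{not} $\lambda$-positive, none of the ergodic- or Lyapunov-based arguments used in the examples of Sections \ref{sec:cwa}--\ref{sec:tou} apply, and one must exploit the specific one-dimensional structure of BBM with drift killed at $0$. The linchpin is the sharp boundary behaviour $1-\eta(y)\asymp y$, whose lower bound ultimately reflects the $1/y$ blow-up of $\Phi_y$ obtained in the $L^2$ computation of part (i); in this sense the $L^2$ estimate does double duty, first delivering the threshold $r(m_1-1)>c^2$ in (i)--(ii) and then feeding the strong-supercriticality argument in (iii).
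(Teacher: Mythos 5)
Your treatment of parts (i) and (ii) is sound and, up to bookkeeping, matches the paper's approach: you pass to the $h$-transformed Bessel-$3$ process and integrate against its explicit kernel, whereas the paper evaluates $\E_x(h^2(X_t))$ directly from the killed-BM density \eqref{eq:densitybm}; both yield $\E_x(M_s^2)\asymp s^2 e^{c^2 s}$ (cf.\ \eqref{eq:bbmphi}--\eqref{eq:bbmphi2}) and hence the threshold $r(m_1-1)>c^2=2\lambda$, and Theorems \ref{theo:main2} and \ref{theo:main} then give (i) and (ii). (Two small slips that do not affect the conclusion: the first-passage estimate should be taken under $P_y$, not $\tilde{P}_y$ --- the Bessel-$3$ never hits $0$ --- and the exponent should read $\sqrt{c^2+2r}-c$ rather than $c+\sqrt{c^2+2r}$; your constant $A(x)$ also differs from the paper's by a normalization factor.)

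Part (iii) is not established. Your reduction is correct: $\Phi_y$ is bounded on $[a,\infty)$ for every $a>0$ and $\Phi_y\asymp 1/y$ near $0$, so $\Gamma^{(x)}\cap\Theta^{(x)}$ is the event that the process survives while all particles sink to the origin; and the two-sided boundary estimate $1-\eta(y)\asymp y$ is correct (Paley--Zygmund gives $1-\sigma(y)\geq 1/\Phi_y\gtrsim y$, the other direction from the single-particle first-passage bound). But the concluding step --- ``iterating the fixed-point identity $f=G(f)$ along the lineages and using $\eta(y)\to 1$ to rule out fixed points other than $\eta$ and $\mathbf{1}$'' --- is not an argument. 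Every fixed point $g$ with $\eta\leq g\leq\mathbf{1}$ (in particular $\sigma$, whether or not $\sigma=\eta$) satisfies $g(y)\to 1$ as $y\to 0^+$, and matching power-law rates at the boundary do not, by themselves, force $g\equiv\eta$. What is missing is a dynamical estimate showing that on $\Theta^{(x)}$ the configuration cannot collapse uniformly to the origin. The paper closes this by proving \eqref{eq:maximo}, i.e.\ $\limsup_{t\to\infty}\max_{u\in\overline{\xi}_t}u_t>0$ a.s.\ on $\Theta^{(x)}$: the inequality $P_x(\Theta^c\mid\F_t)\geq\exp\bigl(-\kappa\sum_{u\in\overline{\xi}_t}u_t\bigr)$, which rests precisely on your first-passage bound, forces $\sum_u u_t\to\infty$ on survival, and a two-sided exit estimate then rules out confinement to any strip $(0,M)$; the paper cites \cite[Theorem 9]{harris2006v1} for this and then applies Proposition \ref{prop:ss1}. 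Your boundary asymptotics is the right seed for that argument, but on its own it leaves strong supercriticality open, and with it the ``strictly positive on $\Theta^{(x)}$'' claim in (i) and the conditional convergence in (ii), both of which rely on (iii) via Corollary \ref{cor:convnu}.
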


Notice that, unlike the previous examples, for this last model $L^2$-convergence does not hold in the entire supercritical region $r(m_1-1)> \lambda$ but only in the smaller subregion $r(m_1-1)>2\lambda$.

\section{The many-to-few lemmas}\label{sec:mtf}

An element which will prove to be crucial in the proof of Theorem \ref{theo:main} is the ability to compute the first and second moments of the process $|\xi|=(|\xi_t|)_{t \geq 0}$ in exact form. We do this with the help of the many-to-few lemmas we state below. For simplicity, we will state only a reduced version of the many-to-one and many-to-two lemmas, which are all we need. For the many-to-few lemma in its full generality (and its proof) we refer to \cite{harris2015}. 

First, we state the many-to-one lemma. It receives this name because it reduces expectations involving random sums over \textit{many} particles, i.e. over all those in $\xi_t$, to expectations involving only \text{one} particle. 

\begin{lemma}[Many-to-one Lemma] \label{lema:mt1} Given a nonnegative measurable function $f:(\overline{J},\B) \rightarrow \R_{\geq 0}$, for every $t \geq 0$ and $x \in J$ we have
	$$
	\E_x \left( \sum_{u \in \overline{\chi}_t} f( u_t ) \right) = e^{r(m_1 - 1)t}\E_x \left( f(X_t) \right).
    $$
\end{lemma}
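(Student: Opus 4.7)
The approach I would take is to decouple the genealogy of the branching process from the motions of its particles. Let $\mathcal{T}$ denote the \emph{branching skeleton}, the random structure recording all birth times, parent--child relations and offspring counts but not the spatial positions of the particles. By construction $\mathcal{T}$ is independent of the collection of motions performed by the individual particles between birth and death, and conditionally on $\mathcal{T}$ those motions are mutually independent copies of $X$ started from the appropriate birth locations. This will let me handle $\sum_u f(u_t)$ one particle at a time and then average over $\mathcal{T}$.

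First I would show that conditionally on $\mathcal{T}$, the position $u_t$ of any fixed $u \in \overline{\chi}_t$ has the same law as $X^{(x)}_t$. Tracing back along the line of descent $\emptyset=u^0, u^1, \ldots, u^k=u$, with successive birth times $0=\tau_0<\tau_1<\cdots<\tau_k\leq t$, each segment $(u^j_s)_{s\in[\tau_j,\tau_{j+1}\wedge t]}$ is by construction an independent copy of $X$: the $j=0$ segment starts from $x$, and for $j\geq 1$ the $j$-th segment starts from the previous segment's terminal value $u^{j-1}_{\tau_j}$. Hence by the Markov property of $X$ the concatenated trajectory is distributed exactly as $X^{(x)}$ on $[0,t]$. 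Using nonnegativity of $f$ and Tonelli's theorem, this gives
$$
\E_x\!\left[\sum_{u \in \overline{\chi}_t} f(u_t) \,\Big|\, \mathcal{T}\right] = N_t \cdot \E_x[f(X_t)],
$$
where $N_t := |\overline{\chi}_t|$ is $\mathcal{T}$-measurable.

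It then remains to take expectations on both sides. Since $(N_t)_{t\geq 0}$ is a continuous-time Galton--Watson process with branching rate $r$ and offspring distribution $m$ (independent of the motion), its mean satisfies the linear ODE $\frac{d}{dt}\E(N_t) = r(m_1-1)\E(N_t)$ with $\E(N_0)=1$, so that $\E(N_t) = e^{r(m_1-1)t}$, which yields the claim. The main point where care is needed is making the conditioning on $\mathcal{T}$ rigorous, but this is standard in branching-process constructions; alternatively one can sidestep it by setting $\phi(t,x):=\E_x\bigl[\sum_u f(u_t)\bigr]$, conditioning on the first branching time to obtain the renewal-type integral equation $\phi(t,x) = e^{-rt}\E_x[f(X_t)] + m_1\int_0^t re^{-rs}\E_x[\phi(t-s,X_s)]\,ds$, and verifying by direct substitution (using the Markov property of $X$ in the form $\E_x[\E_{X_s}[f(X_{t-s})]] = \E_x[f(X_t)]$) that $\psi(t,x):=e^{r(m_1-1)t}\E_x[f(X_t)]$ solves it.
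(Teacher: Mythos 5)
The paper itself does not include a proof of the many-to-one lemma: it states it as a special case of the many-to-few lemma and defers entirely to \cite{harris2015}, whose proof proceeds via a spine change of measure on the space of marked Galton--Watson trees. Your argument is correct but takes a genuinely more elementary route, and it is worth noting why it works here: because the branching rate $r$ is constant, the branching skeleton $\mathcal{T}$ (birth times, offspring numbers, genealogy) is indeed independent of the spatial motions, so conditioning on $\mathcal{T}$ cleanly factors the sum into $N_t\cdot\E_x[f(X_t)]$; the step where you glue the line-of-descent segments via the Markov property to reconstruct the law of $X^{(x)}$ on $[0,t]$ is the essential observation, and the identity $\E(N_t)=e^{r(m_1-1)t}$ for the embedded pure Markov branching process is classical. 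Your alternative renewal-equation route, conditioning on the first branching time and verifying $\psi(t,x)=e^{r(m_1-1)t}\E_x[f(X_t)]$ by substitution together with the Markov property $\E_x[\E_{X_s}f(X_{t-s})]=\E_x[f(X_t)]$, is also correct and has the advantage of sidestepping the measurability of conditioning on $\mathcal{T}$, at the cost of needing a (routine) uniqueness argument for the integral equation. What the spine approach of \cite{harris2015} buys, which your skeleton-decoupling argument would not directly give, is (a) the case of a state-dependent branching rate $r(x)$, where the skeleton is no longer independent of the motion, and (b) a uniform framework for the many-to-$k$ lemmas with $k\geq 2$ such as Lemma \ref{lema:mt2}, where one must track the splitting structure of multiple distinguished lineages. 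For the constant-rate many-to-one statement as posed, your argument is complete and arguably cleaner.
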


Next, we state the many-to-two lemma, used to compute correlations between pairs of particles. Before we can do so, however, we must introduce the notion of $2$-spine for our branching dynamics. 

\begin{definition} Consider the following coupled evolution on $\overline{J}$:
	\begin{enumerate}
		\item [i.] The dynamics starts with 2 particles, both located initially  at some $x \in J$, whose positions evolve together randomly, i.e. describing the same random trajectory, according to $\mathcal{L}$. 
		\item [ii.] The particles wait for an independent random exponential time $E$ of parameter $(m_2-m_1)r$ and then split at their current position, each of them then evolving independently afterwards according to $\mathcal{L}$. 
	\end{enumerate} 
	Now, for $i=1,2$, let $X^{(i)}=(X^{(i)}_t)_{t \geq 0}$ be the process which indicates the position of the $i$-th particle. We call the pair $(X^{(1)},X^{(2)})$ a $2$-spine associated to the triple $(m,r,\mathcal{L})$ and $E$ its splitting time. 
	\end{definition}
	
The many-to-two lemma then goes as follows.

\begin{lemma}[Many-to-two Lemma] \label{lema:mt2} Given any pair of measurable functions $f,g:(\overline{J},\B) \rightarrow \R_{\geq 0}$, for every $t \geq 0$ and $x \in J$ we have
		$$
		\E_x \left( \sum_{u,v \in \overline{\chi}_t} f( u_t )g(v_t) \right) = e^{2r(m_1 - 1)t}\E_x \left( e^{[\text{Var}(m) + (m_1-1)^2]r (E \wedge t)}f(X^{(1)}_t)g(X^{(2)}_t)\right),
		$$ where $(X^{(1)},X^{(2)})$ is a $2$-spine associated to $(m,r,\mathcal{L})$ and $E$ denotes its splitting time.
	\end{lemma}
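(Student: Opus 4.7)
My plan is to compute both sides of the identity explicitly and show they agree, without invoking any PDE. On the LHS I decompose the double sum according to the most recent common ancestor (MRCA) of each pair; on the RHS I condition the 2-spine on its splitting time $E$. The identification then reduces to matching two explicit integrals.

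For the LHS, write
\begin{equation*}
\sum_{u,v\in\overline{\chi}_t}f(u_t)g(v_t)=\sum_{u\in\overline{\chi}_t}(fg)(u_t)+\sum_{\substack{u,v\in\overline{\chi}_t\\ u\neq v}}f(u_t)g(v_t),
\end{equation*}
so that Lemma \ref{lema:mt1} applied to $fg$ shows that the diagonal sum has expectation $e^{r(m_1-1)t}\E_x[(fg)(X_t)]$. For the off-diagonal part, every pair $u\neq v$ admits a unique MRCA $w$: a particle that branched at some time $s\leq t$ into $k\geq 2$ offspring with $u,v$ descending from distinct offspring. Conditionally on this branching event at position $w_s$, the subtrees initiated by the offspring are iid copies of the original branching dynamics from $w_s$, so by independence and Lemma \ref{lema:mt1} the contribution from $w$ has conditional expectation $k(k-1)\,m^f(w_s,t-s)\,m^g(w_s,t-s)$, where $m^h(y,u):=e^{r(m_1-1)u}\E_y[h(X_u)]$. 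Averaging $k(k-1)$ against the offspring law yields $m_2-m_1$, and summing over branching events using that each alive particle triggers branchings at rate $r$ (so $\sum_{\text{branchings at time }s\leq t}\Psi(w_s,s)$ has expectation $\int_0^t r\,\E_x[\sum_{w\in\overline{\chi}_s}\Psi(w_s,s)]\,ds=\int_0^t r e^{r(m_1-1)s}\E_x[\Psi(X_s,s)]\,ds$ by a further use of Lemma \ref{lema:mt1}) gives
\begin{equation*}
\E_x\Bigl[\sum_{u,v\in\overline{\chi}_t}f(u_t)g(v_t)\Bigr]=e^{r(m_1-1)t}\E_x[(fg)(X_t)]+(m_2-m_1)r\int_0^t e^{r(m_1-1)s}\E_x\bigl[m^f(X_s,t-s)m^g(X_s,t-s)\bigr]\,ds.
\end{equation*}

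For the RHS, condition on $E\sim\mathrm{Exp}(\beta)$ with $\beta:=(m_2-m_1)r$ and set $\alpha:=[\mathrm{Var}(m)+(m_1-1)^2]r=(m_2-2m_1+1)r$, so that $\alpha-\beta=-r(m_1-1)$. On $\{E>t\}$ both spines follow a single $X$-trajectory, contributing $e^{2r(m_1-1)t}e^{(\alpha-\beta)t}\E_x[(fg)(X_t)]=e^{r(m_1-1)t}\E_x[(fg)(X_t)]$, exactly the diagonal term. On $\{E=s\leq t\}$ the spines agree up to $s$ and then evolve independently from $X_s$, so the conditional expectation factorises as $\E_{X_s}[f(X_{t-s})]\E_{X_s}[g(X_{t-s})]=e^{-2r(m_1-1)(t-s)}m^f(X_s,t-s)m^g(X_s,t-s)$; combining the prefactor $e^{2r(m_1-1)t}$, the density $\beta e^{(\alpha-\beta)s}ds=\beta e^{-r(m_1-1)s}ds$ and the factor $e^{-2r(m_1-1)(t-s)}$ collapses to $(m_2-m_1)r\,e^{r(m_1-1)s}ds$, recovering exactly the off-diagonal integral. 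The point that will require the most care is the compensator argument used to reduce sums over branching events to time integrals weighted by $re^{r(m_1-1)s}$; a cleaner conceptual alternative that bypasses it is the spinal change-of-measure construction of \cite{harris2015}, under which the many-to-two lemma appears as a single Radon–Nikodym computation in which the factor $e^{[\mathrm{Var}(m)+(m_1-1)^2]r(E\wedge t)}$ is precisely the correction along the common backbone of the 2-spine.
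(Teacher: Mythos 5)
The paper itself gives no proof of Lemma \ref{lema:mt2}; it simply refers the reader to \cite{harris2015} for the many-to-few lemma in general, where the result is obtained via a spinal change-of-measure (a Radon--Nikodym argument along a distinguished $k$-spine). Your proof, by contrast, is a direct two-sided computation: you decompose the left-hand side by diagonal versus off-diagonal pairs and, for the latter, by the most recent common ancestor, reducing everything to Lemma \ref{lema:mt1} plus a Campbell/compensator identity for the branching point process; you then match this term-by-term with the conditioning of the 2-spine on $\{E>t\}$ versus $\{E\le t\}$. The arithmetic checks out: with $\alpha:=[\mathrm{Var}(m)+(m_1-1)^2]r=(m_2-2m_1+1)r$ and $\beta:=(m_2-m_1)r$ one has $\alpha-\beta=-r(m_1-1)$, so the $\{E>t\}$ contribution collapses to $e^{r(m_1-1)t}\E_x[(fg)(X_t)]$ (the diagonal term), and on $\{E=s\le t\}$ the prefactors $e^{2r(m_1-1)t}\cdot\beta e^{(\alpha-\beta)s}\cdot e^{-2r(m_1-1)(t-s)}$ indeed simplify to $(m_2-m_1)r\,e^{r(m_1-1)s}$, matching the MRCA integral. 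Your factorization $k(k-1)\,m^f\,m^g$ for the subtrees rooted at a branching event is exactly right, and $\E[m(m-1)]=m_2-m_1$ closes the loop. What each approach buys: the spinal change-of-measure scales cleanly to general many-to-$k$ identities and state-dependent rates, while your MRCA/compensator computation is more elementary, fully self-contained given Lemma \ref{lema:mt1}, and makes the origin of the exponential correction $e^{[\mathrm{Var}(m)+(m_1-1)^2]r(E\wedge t)}$ transparent as the normalization mismatch between the 2-spine split density and the branching-event rate. The only point that deserves a line of justification in a polished write-up is the compensator step: one should note that the expected contribution per branching event is a bounded measurable function of $(w_s,s)$ (finite because $m_2<\infty$ and because $m^f,m^g$ involve only nonnegative $f,g$ with finite expectations), so Campbell's formula for the rate-$r$ marked point process of branch events applies; you already flag this as the delicate step, and it is standard.
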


\section{Proof of Theorem \ref{theo:main2}}\label{sec:proof2}

We first compute $\E_x( D_t^2)$ for every $t \geq 0$ and $x \in J$. Note that, by the many-to-two lemma and the definition of $2$-spine, a straightforward computation (see the proof of Theorem \ref{theo:main} for details) yields that
\begin{align*}
\E_x(D_t^2) &= \frac{1}{h^2(x)}\E_x \left( \sum_{u,v \in \overline{\xi}_t} h(u_t)h(v_t)e^{-2(r(m_1-1)-\lambda)t}\right)\\
&= \frac{e^{2\lambda t}}{h^2(x)}\E_x \left( e^{[\text{Var}(m) + (m_1-1)^2]r (E \wedge t)}h(X^{(1)}_t)h(X^{(2)}_t)\right)\\
& = [1]_t + [2]_t
\end{align*} where
$$
[1]_t := \frac{e^{2\lambda t}}{h^2(x)}\E_x \left( e^{[\text{Var}(m) + (m_1-1)^2]r (E \wedge t)}h^2(X^{(1)}_t)\mathbbm{1}_{\{E > t\}}\right) = \E_x(M_t^2)e^{-r(m_1-1)t}
$$
and 
$$
[2]_t := (m_2-m_1)r\int_0^t \E_x ( M_s^2\, \E_{X_s}^2 (M_{t-s}) ) e^{-r(m_1-1)s}ds. 
$$ Now, by (A1) we have that $M$ is a mean-one martingale so that $\E_{X_s}(M_{t-s})=1$ for all $s \in [0,t]$. Thus, we obtain that
$$
[2]_t= (m_2-m_1)r\int_0^t \E_x (M_s^2) e^{-r(m_1-1)s}ds.
$$ Recalling the definition of $\Phi_x$, it is then clear that $[2]_t \rightarrow \Phi_x$ as $t \rightarrow +\infty$ and, on the other hand, that whenever $\Phi_x < +\infty$ we have that $\liminf_{t \rightarrow +\infty}[1]_t \rightarrow 0$, so that $\liminf_{t \rightarrow +\infty} \E_x(D_t^2)= \Phi_x$. But, since $\lim_{t \rightarrow +\infty} \E_x(D_t^2)$ always exists (although it can be $+\infty$, in principle) because $\left(D^{(x)}\right)^2$ is a submartingale, we conclude that
$$
\lim_{t \rightarrow +\infty} \E_x( D_t^2) = \Phi_x.
$$ Being $D^{(x)}$ a martingale, this implies that it converges in $L^2$ if and only if $\Phi_x<+\infty$ and that, in this case, one has $\E_x(D_\infty^2)=\Phi_x$. Moreover, since $\E_x(D_t)=1$ for all $t \geq 0$, it also follows that $\E_x(D_\infty)=1$ and so this concludes the proof. 

\section{Proof of Theorem \ref{theo:main}}\label{sec:proof}

This section contains the proof of Theorem \ref{theo:main}. We will split the proof into two parts:

\begin{enumerate}
	\item [I.] First, we will show that, given $B,B' \in \mathcal{C}_X$ with $\nu(B')> 0$, for any $x \in J$ one has
	$$
	\lim_{t \rightarrow +\infty} \E_x(W^2_t(B,B')) = \left[\frac{\nu(B)}{\nu(B')}\right]\Phi_x.
    $$ 
	\item [II.] Then, we use (I) to conclude the convergence in \eqref{eq:conv1} whenever $\Phi_x < +\infty$. In particular, the convergence $W_t^{(x)}(B',B') \overset{P}{\longrightarrow} D_\infty^{(x)}$ together with \eqref{eq:conv1} yields that for any $B \in \mathcal{C}_X$ 
	$$
	\nu_t^{(x)}(B,B') \overset{P}{\longrightarrow} \frac{\nu(B)}{\nu(B')}
	$$ as $t \rightarrow +\infty$, conditionally on the event $\{D_\infty^{(x)} > 0\}$. 
\end{enumerate}
We dedicate a separate subsection to each parts, but begin first with a section devoted to proving two auxiliary lemmas to be used throughout the proof.

\subsection{Preliminary lemmas}

The first lemma we shall require is the following.

\begin{lemma}\label{lema:A3} If assumption (A1) holds then for any $T > 0$ we have
	\begin{equation} \label{eq:A3-ii}
	\lim_{n \rightarrow +\infty} \left[\sup_{t \in [0,T]} \E_x \left( M_t^2 \mathbbm{1}_{\{X_t \notin J_n\}}\right)\right] = 0.
	\end{equation}
\end{lemma}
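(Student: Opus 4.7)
The plan is to split the event $\{X_t \notin J_n\} = \{h(X_t) > n\} \cup \{0 \le h(X_t) < 1/n\}$ (the latter including the absorbing states, where $h$ vanishes so $M_t \equiv 0$) and bound each contribution separately, uniformly in $t \in [0,T]$.

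For the small-$h$ part, I would use the crude deterministic bound coming from the definition of $M_t$: on $\{h(X_t) < 1/n\}$ we have
$$
M_t^2 = \frac{h(X_t)^2}{h(x)^2}e^{2\lambda t} \le \frac{e^{2\lambda T}}{n^2 h(x)^2}, \qquad t \in [0,T],
$$
so $\E_x(M_t^2 \mathbbm{1}_{\{h(X_t)<1/n\}}) \le e^{2\lambda T}/(n h(x))^2 \to 0$ uniformly in $t$.

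The main content is controlling $\E_x(M_t^2 \mathbbm{1}_{\{h(X_t) > n\}})$, and this is where I would invoke the $L^2$-integrability of the martingale guaranteed by (A1). Since $\lambda \ge 0$, the identity $h(X_t) = M_t h(x) e^{-\lambda t}$ gives the inclusion $\{h(X_t) > n\} \subseteq \{M_t > n/h(x)\} \subseteq \{\sup_{s\le T} M_s > n/h(x)\}$ for every $t \in [0,T]$. Hence
$$
\sup_{t \in [0,T]} \E_x\bigl(M_t^2 \mathbbm{1}_{\{h(X_t)>n\}}\bigr) \le \E_x\Bigl( \sup_{s \le T} M_s^2 \cdot \mathbbm{1}_{\{\sup_{s \le T} M_s > n/h(x)\}}\Bigr).
$$
By Doob's $L^2$ maximal inequality, $\E_x(\sup_{s\le T} M_s^2) \le 4\, \E_x(M_T^2) < +\infty$ thanks to (A1), so $\sup_{s \le T} M_s^2$ is integrable and dominated convergence makes the right-hand side vanish as $n\to \infty$. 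Combining the two estimates yields \eqref{eq:A3-ii}.

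The argument is essentially a one-shot uniform integrability observation, so I do not anticipate any serious obstacle; the only point needing attention is to make sure the decomposition of $\{X_t \notin J_n\}$ correctly accounts for absorbing points (where $h=0$, so those points land in the $\{h(X_t)<1/n\}$ piece for every $n$ but contribute zero to the expectation since $M_t = 0$ there). Everything else is a direct application of Doob's inequality and a deterministic pointwise bound.
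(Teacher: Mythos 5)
Your proof is correct and takes essentially the same approach as the paper: split $\{X_t \notin J_n\}$ into the small-$h$ and large-$h$ pieces, kill the small-$h$ piece by the deterministic bound $M_t^2 \le e^{2\lambda T}/(n^2 h^2(x))$, and control the large-$h$ piece by dominating $M_t^2$ with the integrable random variable $\sup_{s\le T} M_s^2$ (Doob) and letting dominated convergence finish. The only cosmetic difference is that you phrase the indicator as $\{\sup_{s\le T} M_s > n/h(x)\}$ while the paper writes $\{\sup_{s\le T} h(X_s) > n\}$; since $h(X_s)=M_s h(x)e^{-\lambda s}$ with $s$ ranging over the compact $[0,T]$, these are equivalent.
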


\begin{proof} Notice that for any $t \in [0,T]$ we have the bound
	$$
	\E_x \left( M_t^2 \mathbbm{1}_{\{X_t \notin J_n\}}\right) \leq \frac{e^{2\lambda T}}{h^2(x)} \cdot \frac{1}{n^2} + \E_x \left( \left(\sup_{s \in [0,T]} M_s^2\right) \mathbbm{1}_{\{ \sup_{s \in [0,T]} h(X_s) > n\}}\right)  
	$$ so that it will suffice to show that 
	\begin{equation} \label{eq:lemaconv2}
	\lim_{n \rightarrow +\infty} \E_x \left( \left(\sup_{s \in [0,T]} M_s^2\right) \mathbbm{1}_{\{ \sup_{s \in [0,T]} h(X_s) > n\}}\right)=0.
	\end{equation} But since $\sup_{s \in [0,T]} M^2_s$ is $P_x$-integrable by Doob's inequality and (A1), and we also have that 
	$$
	\lim_{n \rightarrow +\infty} \mathbbm{1}_{\{ \sup_{s \in [0,T]} h(X_s) > n\}} = \mathbbm{1}_{\{ \sup_{s \in [0,T]} M_s^2 = +\infty\}},
	$$ where the right-hand side is now $P_x$-almost surely null by the integrability of $\sup_{s \in [0,T]} M^2_s$, using the dominated convergence theorem we can conclude \eqref{eq:lemaconv2}. 
\end{proof}

The second lemma concerns the asymptotic behavior of the function $p$ in \eqref{A2}.

\begin{lemma}\label{lema:p} The function $p$ from assumption (A2) satisfies:
	\begin{itemize}
		\item [i.] $p$ has subexponential growth, i.e. $\lim_{t \rightarrow +\infty} \frac{ \log p(t)}{t} = 0$.
		\item [ii.] If we define the function $q(t_1,t_2) := \frac{p(t_2)}{p(t_1+t_2)}$ then for any $C > 0$ we have that
		$$
		\limsup_{t_2 \rightarrow +\infty}\left[\sup_{t_1 \in [0,Ct_2]} q(t_1,t_2)\right]=:q_C < +\infty \hspace{1cm}\text{ and }\hspace{1cm}
		\lim_{t_2 \rightarrow +\infty} \left[\sup_{t_1 \in [0,C]} |q(t_1,t_2)-1|\right] = 0.
		$$
	\end{itemize}
\end{lemma}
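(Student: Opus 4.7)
The plan is to invoke classical theory of regularly varying functions. First, I would observe that the limit function $\ell$ of (A2)(ii) satisfies the multiplicative functional equation $\ell(ab)=\ell(a)\ell(b)$ for every $a,b>0$. This follows by decomposing
$$
\frac{p(abt)}{p(t)}=\frac{p(a\cdot bt)}{p(bt)}\cdot\frac{p(bt)}{p(t)}
$$
and letting $t\to+\infty$: the first factor tends to $\ell(a)$ (since $bt\to+\infty$) and the second to $\ell(b)$. Since $p$ is (implicitly) measurable, so is $\ell$, and the only measurable solutions of this functional equation are of the form $\ell(a)=a^\rho$ for some $\rho\in\R$. I would then appeal to the two standard consequences of measurable regular variation: (a) the Karamata representation $p(t)=t^\rho L(t)$, where $L$ is slowly varying and satisfies $\log L(t)/\log t\to 0$ (itself a consequence of the fact that $t^{-\varepsilon}L(t)\to 0$ and $t^{\varepsilon}L(t)\to\infty$ for every $\varepsilon>0$); and (b) the Uniform Convergence Theorem, which asserts that $p(at)/p(t)\to a^\rho$ uniformly over $a$ in any compact subset of $(0,+\infty)$.

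Assertion (i) then follows at once by writing
$$
\frac{\log p(t)}{t}=\frac{\rho\log t}{t}+\frac{\log L(t)}{t},
$$
as both terms tend to $0$ as $t\to+\infty$ (the second because $\log L(t)=o(\log t)=o(t)$).

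For (ii), I would reparametrize by $a:=(t_1+t_2)/t_2=1+t_1/t_2$, so that $q(t_1,t_2)=p(t_2)/p(at_2)$. For the first statement, if $t_1\in[0,Ct_2]$ then $a\in[1,C+1]$, and applying the Uniform Convergence Theorem on this fixed compact interval yields
$$
\limsup_{t_2\to+\infty}\,\sup_{t_1\in[0,Ct_2]} q(t_1,t_2) \le \sup_{a\in[1,C+1]} a^{-\rho}=\max\bigl(1,(C+1)^{-\rho}\bigr) < +\infty.
$$
For the second statement, when $t_1\in[0,C]$ one has $a\in[1,1+C/t_2]\subseteq[1,2]$ for $t_2>C$; the Uniform Convergence Theorem on $[1,2]$ combined with the continuity of $a\mapsto a^\rho$ at $a=1$ forces $p(at_2)/p(t_2)\to 1$ uniformly over the shrinking interval $[1,1+C/t_2]$, whence $q(t_1,t_2)\to 1$ uniformly over $t_1\in[0,C]$.

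I do not anticipate a serious obstacle: the entire lemma is a direct consequence of the Karamata representation theorem and the Uniform Convergence Theorem for measurable regularly varying functions, both classical. The only mildly delicate point is that in the first half of (ii) the parameter $t_1$ ranges over an unbounded set $[0,Ct_2]$ as $t_2\to+\infty$, but the change of variable $a=(t_1+t_2)/t_2$ maps this set into the fixed compact interval $[1,C+1]$, where uniform convergence is available.
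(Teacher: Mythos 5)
Your proposal is correct and follows essentially the same route as the paper: write $p(t)=t^{\rho}L(t)$ with $L$ slowly varying, handle (i) via the subexponential growth of $L$, and handle (ii) by the Uniform Convergence Theorem after the change of variable $a=1+t_1/t_2$, which places $a$ in the fixed compact interval $[1,1+C]$. The only cosmetic difference is in (i): the paper invokes the explicit Karamata integral representation $L(t)=\exp\{g_1(t)+\int_T^t g_2(s)/s\,ds\}$, whereas you use the equivalent standard consequence $\log L(t)/\log t\to 0$; both are immediate and interchangeable.
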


\begin{proof} It is well-known that if $p$ is a regularly varying function at infinity then there exits $\alpha \in \R$ such that $p(t)= t^{\alpha} L(t)$ for some \textit{slowly} varying function $L$, i.e. a function $L :(0,+\infty) \rightarrow (0,+\infty)$ such that for any $a > 0$
	\begin{equation}
	\label{eq:limitingl}
	\lim_{t \rightarrow +\infty} \frac{L(at)}{L(t)} = 1.
\end{equation} Since it is straightforward to check that the function $t^\alpha$ satisfies (i) and (ii) above, it suffices to verify these claims for any slowly varying function $L$.
	
	To see this, we notice that by Karamata's representation theorem for any such $L$ there exists some $T > 0$ such that for every $t > T$ one has 
	$$
	L(t) = \exp\left\{ g_1(t) + \int_T^t \frac{g_2(s)}{s}ds\right\}
	$$ where $g_1,g_2 : (T,+\infty) \rightarrow \R$ are two bounded measurable functions which respectively satisfy
	$$
	\lim_{t \rightarrow +\infty} g_1(t) = b \in \R \hspace{2cm}\text{ and }\hspace{2cm}\lim_{t \rightarrow +\infty} g_2(t)=0.
	$$ From this representation (i) immediately follows. On the other hand, since the \mbox{convergence in \eqref{eq:limitingl}} is uniform whenever $a$ is restricted to compact sets of $(0,+\infty)$, (ii) now follows immediately from the writing
	$$
	\frac{L(t_2)}{L(t_1+t_2)} = \frac{L(t_2)}{L\left(t_2\left(1 + \frac{t_1}{t_2}\right)\right)} 
	$$ and \eqref{eq:limitingl}, using that $1+\frac{t_1}{t_2} \in [1,1+C]$. This concludes the proof.
\end{proof}

\subsection{Part I}\label{sec:part1} Assume first that $\Phi_x <+\infty$ and let us show that then one has
\begin{equation}\label{eq:phi1}
\lim_{t \rightarrow +\infty} \E_x(W^2_t(B,B'))=\left[\frac{\nu(B)}{\nu(B')}\right]^2 \Phi_x.
\end{equation} To this end, take $t > 0$ and notice that 
\begin{equation}
\label{eq:exp}
\E_x(W_t^2(B,B')) = \frac{\E_{x}(\xi^2_t(B))}{\E_x^2(\xi_t(B'))}.
\end{equation} Let us compute the expectations in the right-hand side of \eqref{eq:exp} by using the many-to-few lemmas. On the one hand, by the many-to-one lemma we have that
	\begin{equation}
	\label{eq:mto1}
	\E_x(\xi_t(B')) = \E_x \left( \sum_{u \in \overline{\chi}_t} \mathbbm{1}_{\{u_t \in B'\}} \right) = e^{r(m_1-1)t}P_x( X_t \in B').
	\end{equation} On the other hand, the many-to-two lemma yields
	\begin{align*}
	\E_x(\xi^2_t(B)) &= \E_x \left( \sum_{u,v \in \overline{\xi}_t} \mathbbm{1}_{\{u_t \in B\}}\mathbbm{1}_{\{v_t \in B\}}\right) \\
	& = e^{2r(m_1-1)t}\E_x \left(  \mathbbm{1}_{\{X^{(1)}_t \in B\}}\mathbbm{1}_{\{X^{(2)}_t \in B\}}e^{[\text{Var}(m)+(m_1-1)^2]r(E\wedge t)}\right).
	\end{align*} By separating in cases depending on whether $E > t$ or not, we obtain
	$$
	\E_x(\xi^2_t(B)) = (1)_{t} + (2)_{t},
	$$ where
	$$
	(1)_{t} := e^{r(m_2-1)t} \E_x (\mathbbm{1}_{\{X_t \in B\}} \mathbbm{1}_{\{E > t\}}) 
	$$ and 
	$$
	(2)_{t}: = e^{2r(m_1-1)t}\E_x\left(  \mathbbm{1}_{\{X^{(1)}_t \in B\}}\mathbbm{1}_{\{X^{(2)}_t \in B\}}e^{[\text{Var}(m)+(m_1-1)^2]rE}\mathbbm{1}_{\{E \leq t\}}\right).
	$$ Now, using the independence of $E$ from the motion of the $2$-spine, the Markov property yields
	$$
	(1)_{t}= e^{r(m_1-1)t}P_x(X_t \in B)
	$$ and
	$$
	(2)_{t}= (m_2-m_1)re^{2r(m_1-1)t} \int_0^t P_x\left(X^{(1),s}_t \in B, X^{(2),s}_t \in B\right)e^{-r(m_1-1)s}ds,
	$$ where $X^{(1),s}$ and $X^{(2),s}$ are two coupled copies of the Markov process $X$ which coincide \mbox{until time $s$} and then evolve independently after $s$. If we condition on the position of these coupled processes at time $s$, then we obtain
	\begin{equation} \label{eq:2th}
	(2)_{t,h} = (m_2-m_1)re^{2r(m_1-1)t} \int_0^t \E_x( P^2_{X_s}(X_{t-s} \in B) )e^{-r(m_1-1)s}ds.
	\end{equation} Now, from \eqref{eq:mto1} and \eqref{A2} we conclude that
	$$
	[1]_{t}:= \frac{(1)_{t}}{\E^2_x(\xi_t(B'))} = \frac{P_x(X_t \in B)}{P_x(X_t \in B')} \cdot \frac{1}{\E_x(\xi_t(B'))} = \frac{\nu(B)+s_B(x,t)}{[\nu(B')+s_{B'}(x,t)]^2} \cdot \frac{1}{h(x)p(t)}e^{-(r(m_1-1)-\lambda)t}
	$$ which, by (i) in Lemma \ref{lema:p}, shows that if $t$ is taken sufficiently large then 
	\begin{equation}
	\label{eq:bound1}
	|[1]_t| \leq 2 \frac{\nu(B)}{\nu(B')} \frac{e^{-\frac{1}{2}(r(m_1-1)-\lambda)t}}{h(x)}.
	\end{equation} Similarly, one has that
	$$
	[2]_{t}:=\frac{(2)_{t}}{\E^2_x(\xi_t(B))}= \int_0^t \Psi_{x,t}(s) ds,
	$$
	where
	$$
	\Psi_{x,t}(s):=(m_2-m_1)r \frac{\E_x( P^2_{X_s}(X_{t-s} \in B))}{P^2_x( X_t \in B')}e^{-r(m_1-1)s}.
	$$ To treat the term $[2]_{t}$ we split the integral into three separate parts, i.e. for $\alpha \in (0,1)$ and $T > 0$ to be specified later we write
	$$
	[2]_{t} = [a]_{t} + [b]_{t} + [c]_t
	$$ where
	$$
	[a]_{t} := \int_{\alpha t}^t \Psi_{x,t}(s)ds \hspace{1cm} 
	[b]_{t} := \int_{T}^{\alpha t} \Psi_{x,t}(s)ds \hspace{1cm}[c]_t:=\int_0^T \Psi_{x,t}(s)ds  . 
	$$ The first term $[a]_{t}$ deals with the case in which $s \rightarrow t$ and the asymptotics in \eqref{A2} for $P_{y}(X_{t-s} \in B)$ may not hold. In this case, if $\alpha$ is taken close enough to $1$ then $[a]_{t}$ tends to zero as $t \rightarrow +\infty$. 
	Indeed, notice that 
	$$
	\E_x( P_{X_s}^2(X_{t-s} \in B) ) \leq \E_x(P_{X_s}( X_{t-s} \in B)) = P_x( X_t \in B)
    $$ by the Markov property, so that
    $$
    [a]_{t} \leq (m_2-m_1)r \frac{P_x(X_t \in B)}{P_x^2(X_t \in B')} \int_{\alpha t}^\infty  e^{-r(m_1-1)s} ds = \frac{m_2-m_1}{m_1-1}e^{(1-\alpha)r(m_1-1)t} [1]_{t}.
    $$
	By recalling \eqref{eq:bound1}, if $\alpha$ is chosen sufficiently close to $1$ and $t$ taken sufficiently large then 
	\begin{equation}\label{eq:bounda}
	|[a]_{t}| \leq e^{-\frac{1}{4}(r(m_1-1)-\lambda)t}.
	\end{equation} Similarly to $[a]_{t}$, the term $[b]_t$ can also be made arbitrarily small as $t \rightarrow +\infty$ if $T$ is large enough. Indeed, by \eqref{A2} we have that
	\begin{align}
	\Psi_{x,t,h}(s) &= (m_2-m_1)r  \left[\frac{q(s,t-s)}{\nu(B')+s_{B'}(x,t)}\right]^2 \E_x(M_s^2(\nu(B)+s_B(X_s,t-s))^2)e^{-r(m_1-1)s} \label{eq:decomp}
	\\
	& \leq (m_2-m_1)8rq^2_{\frac{\alpha}{1-\alpha}} \left(\frac{\nu(B)+\overline{s}_B}{\nu(B')}\right)^2 \E_x(M^2_s) e^{-r(m_1-1)s} \nonumber
	\end{align}
	if $s \leq \alpha t$ and $t$ is large enough so as to have
	\begin{itemize}
		\item [$\bullet$] $\sup_{y \in J,\,u\geq(1-\alpha)t} s_B(y,u) \leq \overline{s}_B$,
		\item [$\bullet$] $q(s,t-s) \leq 2q_{\frac{\alpha}{1-\alpha}}$ (which can be done by (ii) in Lemma \ref{lema:p} since $\frac{s}{t-s}\leq \frac{\alpha}{1-\alpha}$),
		\item [$\bullet$] $s_{B'}(x,t) \geq -\frac{\nu(B')}{2}$,
	\end{itemize} so that
	\begin{equation} \label{eq:bcot}
	|[b]_{t}| \leq 8q^2_{\frac{\alpha}{1-\alpha}} \left(\frac{\nu(B)+\overline{s}_B}{\nu(B')}\right)^2 \cdot (m_2-m_1) \int_T^\infty \E_x(M^2_s)re^{-r(m_1-1)s}ds.
	\end{equation} Since $\Phi_x < +\infty$, the right-hand side of \eqref{eq:bcot} can be made arbitrarily small if $T$ is chosen sufficiently large depending on $\alpha$. 
	
	Finally, let us treat the last term $[c]_{t}$. By (A2) and (ii) in Lemma \ref{lema:p}, for $s \leq T$ we may write
	$$
	\Psi_{x,t,h}(s) = \frac{(m_2-m_1)r}{\nu^2(B')} (1 + o_t(1)) \E_x(M_s^2(\nu(B)+s_B(X_s,t-s))^2)e^{-r(m_1-1)s}
	$$ where $o_t(1)$ (which depends on $x,t,s$ and $B'$) tends to zero uniformly in $s \leq T$ as $t \rightarrow +\infty$. Thus, we may decompose
	$$
	[c]_{t}=[c_1]_{t} + [c_1^*]_{t}
	$$ with 
	$$
	[c_1^*]_{t}= \frac{m_2-m_1}{\nu^2(B')} \int_0^T \E_x(M_s^2(\nu(B)+s_B(X_s,t-s))^2))re^{-r(m_1-1)s}ds
	$$ and 
	\begin{equation} \label{eq:ccot1}
	|[c_1]_{t}| \leq \frac{(\nu(B)+\overline{s}_B)^2}{\nu^2(B')} \Phi_x \left[\sup_{s \leq T}o_t(1)\right], 
	\end{equation} where the right-hand side of \eqref{eq:ccot1} tends to zero as $t \rightarrow +\infty$ since $\Phi_x < +\infty$. Finally, given $n \in \N$ we decompose $[c_1^*]_{t}$ by splitting the expectation inside into two depending on whether $X_s \in J_n$ or not. More precisely, we write 
	$$
	[c_1^*]_{t}=[c_2]_{t}+[c_2^*]_{t}
	$$ where 
	$$
	[c_2^*]_{t} = \frac{(m_2-m_1)}{\nu^2(B')}\int_0^T \E_x(M_s^2(\nu(B)+s_B(X_s,t-s))^2 \mathbbm{1}_{\{X_s \in J_n\}})re^{-r(m_1-1)s}ds
	$$ and 
	\begin{equation}
	\label{eq:ccot2}
	|[c_{2}]_{t}| \leq \frac{m_2-m_1}{m_1-1} \frac{(\nu(B)+\overline{s}_B)^2}{\nu^2(B')} \sup_{s \in [0,T]} \E_x(M_s^2\mathbbm{1}_{\{X_s \notin J_n\}}).
	\end{equation} Notice that the right-hand side of \eqref{eq:ccot2} is independent of $t$ and tends to zero as $n$ tends to infinity for any fixed $T > 0$ by Lemma \ref{lema:A3}. On the other hand, observe that by (A2) and Lemma \ref{lema:A3}
	$$
	\E_x(M_s^2(\nu(B)+s_B(X_s,t-s))^2) \mathbbm{1}_{\{X_s \in J_n\}}) = \nu^2(B) \E_x(M_s^2)(1+\overline{o}_t(1))
	$$ where the term $\overline{o}_t$ (which depends on $x,n,t,s$ and $B$) tends to zero uniformly in $s \leq T$ as $t \rightarrow +\infty$ since $\sup_{s \in [0,T]} \E_x(M_t^2) \leq 4\E_x(M_T^2)$ by Doob's inequality. By repeating the same argument that lead us to \eqref{eq:ccot1}, we conclude that 
	$$
	[c_{2}^*]_{t,h}=[c_{3}]_{t} + [c_{4}]
	$$ where 
	$$
	[c_4] = \left[\frac{\nu(B)}{\nu(B')}\right]^2 \cdot (m_2-m_1)\int_0^T \E_x(M_s^2)re^{-r(m_1-1)s}ds
	$$ and $|[c_3]_{t}|$ tends to zero as $t \rightarrow +\infty$. Thus, we find that if we write $\Gamma:=\{1,a,b,c_1,c_2,c_3\}$ then 
	\begin{equation}
	\label{eq:cotfinal}
	\left| W^{(x)}_{t}(B,B') - \left[\frac{\nu(B)}{\nu(B')}\right]^2\Phi_x\right| \leq \sum_{i \in \Gamma} |[i]_t| + \left[\frac{\nu(B)}{\nu(B')}\right]^2 \cdot (m_2-m_1)\int_T^\infty \E_x(M_s^2)re^{-r(m_1-1)s}ds.
	\end{equation} By taking $\alpha$ adequately close to $1$, $T$ large enough (depending on $\alpha$) and then $n$ sufficiently large (depending on $T$), the right-hand side of \eqref{eq:cotfinal} can be made arbitrarily small for all $t$ large enough and so \eqref{eq:phi1} follows. 
	
	Now, let us assume that $\Phi_x = +\infty$ and show that $\lim_{t \rightarrow +\infty} \E_x(W^2_t(B,B'))=+\infty$ in this case, proving \eqref{eq:phi1}. To see this, we notice that for any fixed $T > 0$ we have
	$$
	\E_x\left(W^2_t(B,B')\right) \geq [c]_{t,0} = \int_0^T \Psi_{x,t,0}(s)ds.
	$$ If $n$ is chosen large enough so that $\sup_{s \in [0,T]} \E_x( M_s^2 \mathbbm{1}_{\{X_s \notin J_n\}}) < 1$, then for all $t$ large enough to  guarantee that
	\begin{enumerate}
		\item [$\bullet$] $\inf_{s \in [0,T]} \left( \frac{q(s,t-s)}{\nu(B')+s_{B'}(x,t)}\right)^2 \geq \frac{1}{2\nu^2(B')}$
		\item [$\bullet$] $\inf_{s \in [0,T], y \in J_n} (\nu(B)+s_B(y,t-s))^2 \geq \frac{\nu^2(B)}{2},$
	\end{enumerate} by \eqref{eq:decomp} we obtain 
	\begin{align*}
	[c]_{t,0} &\geq \left[\frac{\nu(B)}{\nu(B')}\right]^2\frac{(m_2-m_1)}{4} \int_0^T \E_x(M_s^2 \mathbbm{1}_{\{X_s \in J_n\}})re^{-r(m_1-1)s}ds\\
	\\
	&\geq \left[\frac{\nu(B)}{\nu(B')}\right]^2\frac{(m_2-m_1)}{4} \int_0^T \E_x(M_s^2) re^{-r(m_1-1)s}ds - \left[\frac{\nu(B)}{\nu(B')}\right]^2\frac{(m_2-m_1)}{4(m_1-1)}.
	\end{align*}The right-hand side of this last inequality can be made arbitrarily large by taking $T$ big enough, due to the fact that $\Phi_x=+\infty$. In particular, this implies that 
	$$
	\lim_{t \rightarrow +\infty}\E_x\left(W^2_t(B,B')\right)=+\infty
	$$ and thus concludes the proof of Part I.
	
\subsection{Part II}

We now check that, whenever $\Phi_x < +\infty$, one has 
		$$
		W^{(x)}_t(B,B') \overset{L^2}{\longrightarrow} \frac{\nu(B)}{\nu(B')} \cdot D^{(x)}_\infty.
		$$ for every $B,B' \in \mathcal{C}_X$ with $\nu(B')> 0$.  
Notice that by Theorem \ref{theo:main2} it suffices to show that  
		\begin{equation} \label{eq:kseq}
		\lim_{t \rightarrow +\infty} \left\| W^{(x)}_t(B,B') - \frac{\nu(B)}{\nu(B')} \cdot D^{(x)}_t \right\|_{L^2} = 0.
		\end{equation} Now, observe that
		$$
		\left\| W^{(x)}_t(B,B') - \frac{\nu(B)}{\nu(B')} \cdot D^{(x)}_t \right\|_{L^2}^2 = \E_x(W^2_t(B,B')) - 2\frac{\nu(B)}{\nu(B')}\E_x(W_t(B,B')D_t) + \left[\frac{\nu(B)}{\nu(B')}\right]^2\E_x(D^2_t)
		$$ so that, by \eqref{eq:phi1} and Theorem \ref{theo:main2}, \eqref{eq:kseq} will follow if we show that
		$$
		\lim_{t \rightarrow +\infty} \E_x(W_t(B,B')D_t) = \frac{\nu(B)}{\nu(B')} \Phi_x.
		$$ But this can be done by proceeding exactly as in Part I. We omit the details.
		
		Finally, that 
		$$
		\nu^{(x)}_t(B,B') \overset{P}{\longrightarrow} \frac{\nu(B)}{\nu(B')}
		$$ conditionally on the event $\{D^{(x)}_\infty > 0\}$ follows from the fact that 
		\begin{equation}\label{eq:convpexp}
		\frac{\E_x(\xi_t(B'))}{\xi^{(x)}_t(B')}=\frac{1}{W^{(x)}_t(B',B')}\overset{P}{\longrightarrow}\frac{1}{D^{(x)}_\infty} \hspace{1cm}\text{ and }\hspace{1cm}\frac{\xi^{(x)}_t(B)}{\E_x(\xi_t(B'))}=W^{(x)}_t(B,B') \overset{P}{\longrightarrow} \frac{\nu(B)}{\nu(B')} \cdot D^{(x)}_\infty
		\end{equation} conditionally on the event $\{D^{(x)}_\infty > 0\}$, which in turn follows from \eqref{eq:conv1}. This concludes Part II and thus the proof of Theorem \ref{theo:main}.
		

\section{Proof of Theorem \ref{theo:main3}}\label{sec:theo3}

We also divide the proof of Theorem \ref{theo:main3}, now into four parts. First, we show that $\eta$ and $\sigma$ are indeed fixed points of $G$ and, using (B3), that $\eta$ and $\mathbf{1}$ cannot intersect other fixed points of $G$. Next, we show that (B1-B2) imply that our branching dynamics can be  dominated from below by a supercritical Galton-Watson process. Using this domination, we then show that the notion of strong supercriticality can be reformulated in terms of certain fixed points of the operator $G$. Finally, we use this alternative formulation to show both implications of Theorem \ref{theo:main3}.

\subsection{Part I}

We begin by checking first that both $\eta$ and $\sigma$ are fixed points of $G$.

\begin{proposition} \label{prop:fp}
	The functions $\eta$ and $\sigma$ are fixed points of $G$.	
\end{proposition}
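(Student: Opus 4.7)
My plan is to prove both fixed-point identities by conditioning on $\mathcal{F}_1$, the $\sigma$-algebra generated by the branching dynamics up to time $1$, and exploiting the branching property — i.e., the fact that, conditionally on $\mathcal{F}_1$, the future evolution is a superposition of independent copies of the dynamics started from the positions $\{u_1 : u \in \overline{\chi}_1\}$.

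For $\eta$, I observe that the event $\Theta^c$ of extinction occurs if and only if every sub-population emanating from a particle alive at time $1$ eventually contributes no living non-absorbed particles. I split $\overline{\chi}_1 = \overline{\xi}_1 \sqcup (\overline{\chi}_1 \setminus \overline{\xi}_1)$. For $u \in \overline{\chi}_1 \setminus \overline{\xi}_1$ one has $u_1 \in \partial_* \overline{J}$; since absorbing states are absorbing for every descendant as well, the entire sub-tree stays in $\partial_* \overline{J}$ and contributes no mass to $\xi$, so such particles contribute a factor of $1$. For $u \in \overline{\xi}_1$ the sub-tree is, given $\mathcal{F}_1$, distributed as the original dynamics started from $u_1 \in J$ and hence goes extinct with probability $\eta(u_1)$. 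The conditional independence of the sub-trees then yields $P_x(\Theta^c \mid \mathcal{F}_1) = \prod_{u \in \overline{\xi}_1} \eta(u_1)$, and taking expectation gives $\eta = G(\eta)$.

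For $\sigma$, the key ingredient is the decomposition
$$D_{t+1}^{(x)} = \frac{e^{-(r(m_1-1)-\lambda)}}{h(x)} \sum_{u \in \overline{\xi}_1} h(u_1)\, D_t^{(u)},$$
where $D^{(u)}$ denotes the Malthusian martingale of the sub-tree emanating from $u$; by the Markov property, conditionally on $\mathcal{F}_1$ these sub-martingales are independent with $D^{(u)} \sim D^{(u_1)}$. (Note that descendants of absorbed particles contribute $0$ to $D$ since $h \equiv 0$ on $\partial_* \overline{J}$, so it is enough to sum over $\overline{\xi}_1$.) Letting $t \to \infty$ and using that all summands are nonnegative with $h(u_1) > 0$ for $u \in \overline{\xi}_1$, I deduce that $\{D_\infty^{(x)} = 0\}$ holds if and only if $D_\infty^{(u)} = 0$ for every $u \in \overline{\xi}_1$ (with the convention that if $\overline{\xi}_1 = \emptyset$ then $D_\infty^{(x)} = 0$, matching the empty-product convention equal to $1$). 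Hence $P_x(D_\infty = 0 \mid \mathcal{F}_1) = \prod_{u \in \overline{\xi}_1} \sigma(u_1)$, and taking expectation gives $\sigma = G(\sigma)$.

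The only mild subtlety is the careful bookkeeping that distinguishes $\overline{\chi}_1$ from $\overline{\xi}_1$ and notices that absorbed particles at time $1$, together with all their descendants, contribute neither to the product defining $G$ nor to the sum defining $D$. Beyond this observation, both identities reduce to a direct application of the branching and Markov properties.
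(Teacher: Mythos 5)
Your proof is correct and follows essentially the same route as the paper's: decompose at time $1$, note that for $\eta$ the indicator of extinction factorizes over the sub-trees rooted at the unabsorbed time-$1$ particles, and for $\sigma$ use the identity $D_{1+t}^{(x)} = h(x)^{-1}e^{-(r(m_1-1)-\lambda)}\sum_{u \in \overline{\xi}_1} h(u_1)D_t^{(u)}$, let $t\to\infty$, observe that nonnegativity and $h>0$ on $J$ reduce $\{D^{(x)}_\infty=0\}$ to the intersection of $\{D^{(u)}_\infty=0\}$, then take expectations via the branching property. Your only additions --- making the conditioning on $\mathcal{F}_1$ explicit and spelling out that particles of $\overline{\chi}_1\setminus\overline{\xi}_1$ and their descendants never re-enter $\overline{\xi}_t$ and thus contribute the neutral element to the product (resp.\ sum) --- are details the paper leaves implicit, but they do not change the argument.
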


\begin{proof}
	Observe that for any $t > 0$ and $x \in J$ we have the relation
	$$
	|\xi_{1+t}^{(x)}| = \sum_{u \in \overline{\xi}^{(x)}_1} |\xi_t^{(u_1)}|, 
	$$ which implies that, for any $t > 0$, $|\xi^{(x)}_{1+t}|$ equals zero if and only if $|\xi_t^{(u_1)}|$ is zero for every $u \in \overline{\xi}_1^{(x)}$. Thus, if we take $t \rightarrow +\infty$ then the former yields 
	\begin{equation}
	\label{eq:eta1}
	\mathbbm{1}_{\left(\Theta^{(x)}\right)^c} = \prod_{u \in \overline{\xi}^{(x)}_1}\mathbbm{1}_{\left(\Theta^{(u_1)}\right)^c}.
	\end{equation} By taking expectations $\E_x$ on the equality in \eqref{eq:eta1}, we obtain that $\eta(x)=G(\eta)(x)$. Furthermore, since this holds for any $x \in J$, we conclude that $\eta$ is a fixed point of $G$.
	
	Now, to see that $\sigma$ is a fixed point of $G$, we observe the analogous relation
	$$
	D_{1+t}^{(x)} = \frac{1}{h(x)}\sum_{u \in \overline{\xi}^{(x)}_1} h(u_1) e^{-(r(m_1-1)-\lambda)} D_t^{(u_1)}
	$$ which, upon taking the limit $t \rightarrow +\infty$, becomes
	$$
	D_{\infty}^{(x)} = \frac{1}{h(x)}\sum_{u \in \overline{\xi}^{(x)}_1} h(u_1) e^{-(r(m_1-1)-\lambda)} D_\infty^{(u_1)}.
	$$ Since $h(y)=0$ if and only if $y \in \partial_* \overline{J}$, that $\sigma$ is a fixed point of $G$ now follows as before.  
\end{proof}

Next, we use irreducibility to see that $\eta$ and $\mathbf{1}$ cannot intersect other fixed points of $G$.

\begin{proposition}\label{prop:fpG}
	Assume that (B3) holds. Then, if $g$ is a fixed point of $G$ we have that:
	\begin{enumerate}
		\item [i.] $\eta(x) \leq g(x) \leq 1$ for all $x \in J$.
		\item [ii.] $g(x) = \eta(x)$ for some $x \in J \Longrightarrow g \equiv \eta$.
		\item [iii.] $g(x) = 1$ for some $x \in J \Longrightarrow g \equiv \mathbf{1}$. 
	\end{enumerate}
\end{proposition}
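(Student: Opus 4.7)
The plan is to obtain (i) from the monotonicity of $G$ and to treat (ii) and (iii) in parallel via a three-step strategy: iterate the fixed-point identity, collapse the resulting many-particle product to a one-particle statement by conditioning on the event that no branching has occurred, and then propagate the resulting information from $x$ to every $y \in J$ using the irreducibility hypothesis (B3).

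For (i), the bound $g \leq 1$ is immediate from $g \in \mathfrak{B}$. For the lower bound I would use that $G$ is monotone (since all factors lie in $[0,1]$): iterating $\mathbf{0} \leq g$ yields $G^n(\mathbf{0})(x) \leq G^n(g)(x) = g(x)$ for every $n$. A direct computation gives $G^n(\mathbf{0})(x) = P_x(|\xi_n|=0)$, and since absorbed/dead configurations persist in time, these probabilities are non-decreasing in $n$ and increase to $P_x(\Theta^c) = \eta(x)$.

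For (ii) and (iii), set $B := \{g < 1\} \subseteq J$ in case (iii) and $B := \{g > \eta\} \subseteq J$ in case (ii); in both cases the goal is $B = \emptyset$. Iterating the fixed-point equation gives
$$
g(x) = \E_x \prod_{u \in \overline{\xi}_n} g(u_n) \qquad \text{and} \qquad \eta(x) = \E_x \prod_{u \in \overline{\xi}_n} \eta(u_n)
$$
for every $n \geq 1$. Combining the hypothesis $g(x) = 1$ (resp.\ $g(x) = \eta(x)$) with $g \leq 1$ (resp.\ $g \geq \eta$, by (i)) forces $\prod_u g(u_n) = 1$ (resp.\ $\prod_u g(u_n) = \prod_u \eta(u_n)$) $P_x$-almost surely. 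I would then condition on the event $E^{(n)}$ that no branching has occurred by time $n$, which, because the branching rate is constant, is independent of the underlying motion and has probability $e^{-rn} > 0$. On $E^{(n)} \cap \{X_n \in J\}$, the set $\overline{\xi}_n$ consists of the single original particle at position $X_n$, so the products collapse to $g(X_n)$ and $\eta(X_n)$, and the almost sure identities force $X_n \notin B$ on this event. Since $B \subseteq J$ and the motion is independent of $E^{(n)}$, one obtains $e^{-rn} P_x(X_n \in B) = P_x(E^{(n)} \cap \{X_n \in B\}) = 0$, so $P_x(X_n \in B) = 0$ for every $n \geq 1$.

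To conclude, I would fix an arbitrary $y \in J$: if $P_y(X_1 \in B) > 0$, then (B3) would give some $n$ with $P_x(X_{n+1} \in B) > 0$, contradicting the previous step, so $P_y(X_1 \in B) = 0$. The many-to-one lemma (Lemma~\ref{lema:mt1}) then yields $\E_y \xi_1(B) = e^{r(m_1-1)} P_y(X_1 \in B) = 0$, so $\xi_1^{(y)}(B) = 0$ $P_y$-a.s.\ and every $u \in \overline{\xi}_1^{(y)}$ sits at a point where $g = 1$ (resp.\ $g = \eta$); plugging this into $g(y) = G(g)(y)$ gives $g(y) = 1$ (resp.\ $g(y) = \eta(y)$). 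The main obstacle I expect is precisely the collapse from the many-particle product identity to a trajectory-level statement about $X$: without the conditioning on the no-branching event $E^{(n)}$ one has only an integrated identity over a random configuration, and the Markov irreducibility (B3) can only be applied once that identity has been reduced to a one-particle event about $X_n$.
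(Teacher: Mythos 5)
Your proof is correct and follows essentially the same route as the paper's: both arguments reduce the many-particle product identity to a single-particle statement about $X_n$ by restricting to the (positive-probability, motion-independent) no-branching event, then use (B3) to propagate $P_\cdot(X_1 \in B) = 0$ from the given $x$ to every $y \in J$, and finally conclude via the fact that the expected particle count in $B$ at time $1$ is proportional to $P(X_1 \in B)$ (which you invoke through the many-to-one lemma, and the paper phrases as the positions $u_1$ being distributed as $X_1$). The only cosmetic difference is that the paper argues by contrapositive — if $P_x(X_n \in \{\eta<g\})>0$ then $\eta(x)<g(x)$ — whereas you first extract the almost-sure product identity from the equality of means and then read it off on the no-branching event; the two formulations are logically interchangeable.
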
 

\begin{proof} We show first that if $g$ is a fixed point of $G$ then $\eta \leq g \leq \mathbf{1}$. Indeed, the $g \leq \mathbf{1}$ inequality is immediate whereas the $\eta \leq g$ inequality follows from the fact that $G$ is an increasing operator, i.e. $G(f_1) \leq G(f_2)$ if $f_1 \leq f_2$, together with the fact that $\eta = \lim_{n \rightarrow +\infty} G^{(n)}(\mathbf{0})$, where $G^{(n)}$ denotes the $n$-th composition of $G$ with itself and $\mathbf{0}$ is the function constantly equal to $0$.

Now, let us prove (ii). First, we observe that it is easy to check by induction that for any $n \in \N$ 
$$
G^{(n)}(g)(x)= \E_x \left( \prod_{u \in \overline{\xi}_n} g(u_n) \right).
$$ In particular, if $x \in J$ satisfies $P_x( X_n \in \{ y \in J : \eta(y) < g(y) \}) > 0 ) > 0$ for some $n \in \N$ then, by considering only evolutions of $\xi^{(x)}$ in which there is no branching until time $n$, it is clear that 
$$
P_x\left( \prod_{u \in \overline{\xi}_n} \eta(u_n) < \prod_{u \in \overline{\xi}_n} g(u_n)\right) > 0
$$ so that 
$$
\eta(x)=G^{(n)}(\eta)(x)=\E_x \left( \prod_{u \in \overline{\xi}_n} \eta(u_n)\right) < 
\E_x \left( \prod_{u \in \overline{\xi}_n} g(u_n)\right)=G^{(n)}(h)(x)=g(x).
$$ Therefore, if $\eta(x)=g(x)$ then we must have $P_x( X_n \in \{ y \in J : \eta(y) < g(y) \}) = 0$ for every $n \in \N$. By irreducibility (assumption (B3)) we then obtain that $P_{x'}( X_{1} \in \{ y \in J : \eta(y) < g(y) \}) = 0$ and, as a consequence, that $P_{x'}( \xi_{1}( \{ y \in J: \eta(y) < g(y) \}) > 0 ) = 0$ holds for every $x' \in J$ since the particle positions $(u_1)_{u \in \overline{\xi}^{(x')}_1}$ are all distributed as $X^{(x')}_1$. Since $\eta$ and $h$ are fixed points of $G$, this implies that $g(x') \leq \eta(x')$ for every $x' \in J$ which, together with (i) shown above, allows us to conclude that $\eta \equiv g$. The proof of (iii) is analogous.
\end{proof}

\subsection{Part II} The following step is to show that, under (B1-B2), one has a suitable lower bound on the growth of our dynamics. To this end, for each $n \in \N$ we define the set 
\begin{equation}
\label{eq:defjotatilde}
\tilde{J}_n:=\{ x \in J : \Phi_x \leq n\}
\end{equation} and then write $\hat{J}_n:=J_n \cap \tilde{J}_n$. Notice that the sequence $(\hat{J}_n)_{n \in \N}$ is increasing and, furthermore, that $\cup_{n \in \N} \hat{J}_n = J$ by (B1). Now, the precise meaning of lower bound on the growth of our dynamics is formulated in the following definition. 

\begin{definition} \label{def:bstar} We say that the \textit{lower bound condition} holds, and denote it in the \mbox{sequel by (LB),} if for any $n \in \N$ and $B \in \B_J$ with $\nu(B) > 0$ there exists a time $T_{n,B} $ and a random variable $L_{n,B}$ satisfying $\E(L_{n,B}) > 1$ such that for all $x \in \hat{J}_n$ and every $t > T_{n,B}$ one has
	$$
	L_{n,B} \preceq \xi^{(x)}_t(B)
	$$ where $\preceq$ denotes stochastic domination, i.e. for any bounded measurable and increasing $f: \R \to \R$ one has that
	$$
	\E(f(L_{n,B})) \leq \inf_{x \in \hat{J}_n} \E_x( f(\xi_t(B))).
	$$ 
\end{definition}

\begin{remark}\label{rem:crec} Note that, by (B2) and Lemma \ref{lema:mt1} below, for any \mbox{$B \in \B_J$ with $\nu(B) > 0$ and $x \in J_n$} we have that 
	\begin{align}
	\E_x(\xi_t(B)) &\geq \E_x(\xi_t(B^*)) = h(x) p(t) e^{(r(m_1-1)-\lambda)t}(\nu(B^*)+s_{B^*}(x,t)) \nonumber\\
	& \geq \frac{1}{n}p(t) e^{(r(m_1-1)-\lambda)t}\left(\nu(B^*)+ \inf_{y \in J_n} s_{B^*}(y,t)\right) \label{eq:assump2B1}
	\end{align} so that, by Lemma \ref{lema:p} and (A2), for all $t$ large enough depending on $B$ and $n$ we have that
	\begin{eqnarray}
	\label{eq:assump2B}
	\inf_{x \in \hat{J}_n} \E_x(\xi_t(B)) > 1.
	\end{eqnarray} Thus, condition (LB) is simply a stronger form of \eqref{eq:assump2B}, one in which we ask the entire distributions of the random variables $(\xi_t^{(x)}(B))_{x \in \hat{J}_n}$ to be uniformly supercritical rather than just their means. 
\end{remark}

The lower bound (LB) will be the main tool in the proof of Lemma \ref{lemma:cru} in Part III below, which is crucial for proving Theorem \ref{theo:main3}. Our next result states that (LB) holds under (B2).  

\begin{proposition}\label{prop:b1} Assumption (B2) implies condition (LB). 
\end{proposition}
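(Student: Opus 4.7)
The plan is to establish (LB) via the classical second-moment (Paley-Zygmund) method: combine the lower bound on the mean growth already extracted in Remark 7.2 with a uniform upper bound on the normalized second moment coming from Theorem 2.8, and then manufacture the dominating random variable as a suitable Bernoulli.

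First I would use (B2) to pick $B^{*} \in \mathcal{C}_X$ with $B^{*} \subseteq B$ and $\nu(B^{*}) > 0$; since $\xi_t(B) \geq \xi_t(B^{*})$, it suffices to establish (LB) for $B^{*}$. The estimate \eqref{eq:assump2B1}, together with (A2)(iii), the fact that $p$ has subexponential growth (Lemma 6.2(i)) and the supercriticality condition $r(m_1-1) > \lambda$ from (I2), then shows that
$$
\mu_{n}(t) := \inf_{x \in \hat{J}_n} \E_x(\xi_t(B^{*})) \;\longrightarrow\; +\infty \qquad \text{as } t \to +\infty.
$$
This handles the first moment side.

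Next I would use Theorem \ref{theo:main}(i) with $B = B' = B^{*}$ to control the normalized second moment:
$$
\frac{\E_x(\xi_t(B^{*})^2)}{\E_x(\xi_t(B^{*}))^2} = \E_x\!\left(W_t^2(B^{*},B^{*})\right) \;\longrightarrow\; \Phi_x.
$$
For $x \in \hat{J}_n \subseteq \tilde{J}_n$ one has $\Phi_x \leq n$ by the definition \eqref{eq:defjotatilde} of $\tilde{J}_n$. A revision of the estimates in Part I of the proof of Theorem \ref{theo:main} shows that the error terms there are all controlled by $h(x)$, $\Phi_x$ and quantities depending only on $B^{*}$; since $h(x) \geq 1/n$ on $J_n$ and $\Phi_x \leq n$ on $\tilde{J}_n$, these bounds are uniform over $\hat{J}_n$. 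Therefore one can find $T_{n,B} > 0$ such that for every $t \geq T_{n,B}$,
$$
\sup_{x \in \hat{J}_n} \frac{\E_x(\xi_t(B^{*})^2)}{\E_x(\xi_t(B^{*}))^2} \leq 2n \qquad \text{and} \qquad \mu_n(t) \geq 16 n.
$$

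With both bounds in hand, the Paley-Zygmund inequality yields, for every $x \in \hat{J}_n$ and every $t \geq T_{n,B}$,
$$
P_x\!\left( \xi_t(B^{*}) \geq 8n \right) \geq P_x\!\left( \xi_t(B^{*}) \geq \tfrac{1}{2} \E_x(\xi_t(B^{*})) \right) \geq \frac{1}{4} \cdot \frac{\E_x(\xi_t(B^{*}))^2}{\E_x(\xi_t(B^{*})^2)} \geq \frac{1}{8n}.
$$
It then suffices to define $L_{n,B}$ to take the value $8n$ with probability $\tfrac{1}{8n}$ and $0$ otherwise, which gives $\E(L_{n,B}) = 1 + \tfrac{1}{something} > 1$ after slightly enlarging the constant $8n$ (say, using $16n$ in place of $8n$, giving $\E(L_{n,B}) = 2 > 1$); the inequality above then says exactly that $L_{n,B} \preceq \xi_t^{(x)}(B^{*}) \leq \xi_t^{(x)}(B)$ for all $x \in \hat{J}_n$ and $t \geq T_{n,B}$, as required.

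The main obstacle I foresee is the uniformity claim used in the second step: the convergence $\E_x(W_t^2(B^{*},B^{*})) \to \Phi_x$ in Theorem \ref{theo:main} was proved pointwise in $x$, so one must revisit the estimates \eqref{eq:bound1}--\eqref{eq:cotfinal} in the proof of Part I and verify that each bound is controlled by $1/h(x)$, $\Phi_x$ and the quantities $\nu(B^{*})$, $\overline{s}_{B^{*}}$, $q_{\alpha/(1-\alpha)}$, all of which are uniform over $\hat{J}_n$. Once this uniformity is confirmed, the remaining arguments are routine applications of the second-moment method.
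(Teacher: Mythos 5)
Your proof is correct and follows essentially the same second-moment (Cauchy--Schwarz/Paley--Zygmund) strategy as the paper: a uniform-over-$\hat{J}_n$ control of $\E_x(\xi_t^2(B^*))/\E_x^2(\xi_t(B^*))$ extracted from the proof of Theorem \ref{theo:main}, combined with the divergence of the mean from Remark \ref{rem:crec}, then materialized as a two-point dominating variable. The only stylistic differences are that you reduce explicitly to $B^*$ at the outset (slightly cleaner than the paper's phrasing) and use a variable threshold $\tfrac{1}{2}\E_x(\xi_t(B^*))$ rather than a fixed integer $K$; your self-corrected constant adjustment ($16n$ in place of $8n$) is sound.
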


\begin{proof}
	Let us fix $n \in \N$ and notice that, by the Cauchy-Schwarz inequality, we have for any $x \in J_n$, $B \in \B_{J}$ and $K,T \in \N$ that
	\begin{equation} \label{eq:holder}
	\E_{x}^2(\xi_T(B)\mathbbm{1}_{\xi_T(B)\geq K}) \leq \E_x(\xi^2_T(B))P_x(\xi_T(B)\geq K).
	\end{equation}On the other hand, if $\nu(B)>0$ then it follows from (B2), \eqref{eq:assump2B1} and Assumptions \ref{assumpG} that 
	$$
	\lim_{T \rightarrow +\infty} \left[ \inf_{y \in J_n} \E_y(\xi_T(B))\right] = +\infty.
	$$ Therefore, by \eqref{eq:holder} we conclude that if $T$ is sufficiently large (depending only on $K$, $n$ and $B$) then for all $x \in J_n$ we have
	$$
	P_x(\xi_T(B)\geq K) \geq \frac{[\E_x(\xi_T(B))-K]^2}{\E_x(\xi^2_T(B))} \geq \frac{1}{2} \cdot \frac{\E_x^2(\xi_T(B))}{\E_x(\xi_T^2(B))}.
	$$ Now, a careful inspection of the proof of Theorem \ref{theo:main} shows that there exists a constant $C_n > 0$ and a time $T_n > 0$ such that 
	$$
	\frac{\E_x(\xi^2_T(B))}{\E_x^2(\xi_T(B))} \leq C_n \Phi_x + 1
	$$ for all $x \in J_n$ and $T > T_n$. We stress that $C_n$ and $T_n$ do \textit{not} depend on $x \in J_n$, only on $n$ and $B$. Therefore, since $\sup_{y \in \tilde{J}_n} \Phi_y <+\infty$, we may take $K \in \N$ sufficiently large and $T \in \N$ accordingly so that 
	$$
	\inf_{x \in \hat{J}_n} P_x(\xi_T(B)\geq K ) \geq \frac{1}{K-1}.
	$$ It follows that $L_{n,B} \preceq \xi^{(x)}_T(B)$ for any such $T$ and all $x \in \hat{J}_n$, where $L_{n,B}$ has distribution given by 
	$$
	P(L_{n,B}=K) = \frac{1}{K-1} = 1 - P(L_{n,B}=0).
	$$ Since in this case $\E(L_{n,B})=\frac{K}{K-1}>1$, this concludes the proof.
\end{proof}

\subsection{Part III}

We continue by using Proposition \ref{prop:b1} to show that strong supercriticality can be reformulated \mbox{in terms of} certain fixed points of $G$. More precisely, we have the following result.

\begin{proposition} \label{prop:ss1} If Assumptions \ref{assumpG2} are satisfied then $\xi^{(x)}$ is strongly supercritical if and only if the following two conditions hold:
	\begin{enumerate}
		\item [i.] $\xi^{(x)}$ is supercritical, i.e. $P_x(\Theta) > 0$.
		\item [ii'.] There exists $n \in \N$ such that 
	$$
	P_x(\Theta) = P_x \left( \limsup_{k \rightarrow +\infty} \xi_k(\tilde{J}_n) > 0\right)
	$$ where $\tilde{J}_n$ is given by \eqref{eq:defjotatilde}.
	\end{enumerate}
\end{proposition}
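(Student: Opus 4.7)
My plan is to reformulate both sides of the equivalence as statements about the monotone-increasing sequence of events $A_n := \{\limsup_{k \to +\infty} \xi_k(\tilde{J}_n) > 0\}$ and then to apply the lower bound condition (LB) established in Proposition \ref{prop:b1} in order to bridge ``limit equality'' and ``finite-$n$ equality''.

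First I would establish the dictionary. By the convention $\min_{u \in \emptyset} \Phi_{u_t} = +\infty$, one has $\Theta^c \subseteq \Gamma$, hence $\Gamma^c \subseteq \Theta$. Using the cadlag structure of the branching dynamics to reconcile continuous- and integer-time visits up to $P_x$-null sets, one checks that $\Gamma = \bigcap_n A_n^c$, so $\Gamma^c = \bigcup_n A_n$ as a monotone union. Consequently, strong supercriticality of $\xi^{(x)}$ reduces to the pair of conditions $P_x(\Theta) > 0$ and $\lim_n P_x(A_n) = P_x(\Theta)$. The implication (ii') $\Rightarrow$ strong supercriticality is then immediate from monotonicity: if $P_x(A_{n_0}) = P_x(\Theta)$ for some $n_0$, then $A_{n_0} \subseteq A_n \subseteq \Theta$ for $n \geq n_0$ forces equality of measures throughout, and hence in the limit.

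For the converse one must produce a finite $n_0$ at which the limiting identity is actually attained. Here I would apply (LB) with $B = \hat{J}_n := J_n \cap \tilde{J}_n$ (legitimate for $n$ large enough by (B2), given that $\hat{J}_n \uparrow J$), obtaining an integer waiting time $T$ and a variable $L_n$ with $\E(L_n) > 1$ such that $\xi_T^{(y)}(\hat{J}_n) \succeq L_n$ uniformly for $y \in \hat{J}_n$. Iterating along discrete times $kT$ via the strong Markov property embeds, from any starting particle in $\hat{J}_n$, a supercritical Galton-Watson process inside $(\xi_{kT}(\hat{J}_n))_{k \geq 0}$. A Borel-Cantelli-type reasoning, exploiting the strong-Markov independence of the GW's launched at each fresh entry of a descendant into $\hat{J}_n$, then shows that $A_n^c$ agrees $P_x$-a.s. with the event of \emph{only finitely many entries of descendants into $\hat{J}_n$}. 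The resulting sequence of events, decreasing in $n$ (since $\hat{J}_n \uparrow J$) and intersected with $\Theta$, shrinks down to $\Gamma \cap \Theta$; strong supercriticality forces this limit to be $P_x$-null, and irreducibility (B3) is then used to preclude trapping subregions of the state space accessible from $x$, yielding $P_x(A_{n_0}^c \cap \Theta) = 0$ for some finite $n_0$.

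The main obstacle is this final step: promoting the decreasing sequence $P_x(A_n^c \cap \Theta)$, known to vanish in the limit, to one that actually \emph{reaches} zero at some finite $n_0$. This is precisely where the (LB)-based identification of $A_n^c$ with ``only finitely many entries into $\hat{J}_n$'' proves essential, and where (B3) rules out the pathological surviving trajectories that avoid every $\hat{J}_{n_0}$ without actually escaping to the absorbing boundary --- exactly the phenomenon that separates strong supercriticality from mere supercriticality, as illustrated by the transient Ornstein-Uhlenbeck example in Section \ref{sec:tou}.
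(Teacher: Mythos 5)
Your reformulation is in the right spirit and you correctly identify (LB) via Proposition \ref{prop:b1} as the engine, but there are two genuine gaps. First, the ``dictionary'' $\Gamma = \bigcap_n A_n^c$ up to $P_x$-null sets is not a consequence of cadlag paths, and treating it as a preliminary reconciliation step hides most of the real work. The trivial inclusion is $\Gamma \subseteq \bigcap_n A_n^c$ (continuous-time escape implies lattice-time escape); the reverse requires showing that a system avoiding every $\tilde J_n$ at all \emph{integer} times cannot persistently revisit some $\tilde J_m$ at intermediate times. Between integers a particle can enter and leave $\tilde J_m$ undetected by the lattice, and the only thing that rules out such visits accumulating forever is the branching structure: any visit to $\hat J_m$ seeds, by (LB), a supercritical Galton--Watson process of lattice-time occupations of $\hat J_m$, so almost surely one of them explodes. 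That is precisely the content of Lemma \ref{lemma:cru} in the paper, which is therefore already embedded in your ``dictionary'' claim rather than coming afterwards.

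Second, and more importantly, even granting the dictionary your argument for ``strongly supercritical $\Longrightarrow$ (ii')'' does not close. You note correctly that the decreasing sequence $P_x(A_n^c \cap \Theta) \downarrow P_x(\Gamma \cap \Theta) = 0$ does not by itself yield a finite $n_0$ at which the probability is already zero, but the remedy you propose --- invoking irreducibility (B3) to exclude trapping subregions --- is not the right tool: (B3) is not used in the paper's proof of this proposition at all (it only appears in Proposition \ref{prop:fpG} and Part IV of the proof of Theorem \ref{theo:main3}), and you do not explain how it would produce the finite $n_0$. What actually closes the gap is a statement strictly stronger than the dictionary: Lemma \ref{lemma:cru}, applied with $m = n+1$ and $B = \tilde J_n$, yields the $P_x$-a.s.\ inclusion
\[
\bigl\{\lim_{k\to\infty}\xi_k(\tilde J_n)=0\bigr\} \subseteq \bigl\{\lim_{t\to\infty}\xi_t(\tilde J_{n+1})=0\bigr\}
\]
for every fixed $n$ large enough that $x\in\hat J_n$ and $\nu(\tilde J_n)>0$. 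Iterating this over $n, n+1, n+2,\dots$ shows that $A_n^c \subseteq \Gamma$ (up to a $P_x$-null set) already for that single fixed $n$, not merely that $\bigcap_n A_n^c = \Gamma$; hence $P_x(A_n^c) \le P_x(\Gamma) = P_x(\Theta^c)$, and together with the trivial $\Theta^c\subseteq A_n^c$ this gives $P_x(A_n^c)=P_x(\Theta^c)$, i.e.\ (ii'), at a finite index. Your sketch never produces this per-$n$ inclusion, and without it the finite-$n_0$ conclusion does not follow.
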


\begin{remark}\label{rem:fpG}Observe that for $B \in \B$ the function $g_{B}$ defined as
	$$
	g_{B}(x):= P_x\left( \limsup_{n \rightarrow +\infty} \xi_{n}(B) = 0\right)
	$$ is a fixed point of $G$. Indeed, the proof of this statement is analogous to that of Proposition \ref{prop:fp}. Thus, Proposition \ref{prop:ss1} states that $\xi^{(x_0)}$ is strongly supercritical if and only if for some $n \in \N$
	$$
	g_{\tilde{J}_n}(x_0)=\eta(x_0)<1.
	$$ But then by Proposition \ref{prop:fpG} we conclude that the same statement must hold for all $x \in J$, so that $\xi^{(x)}$ is strongly supercritical for some $x \in J$ if and only if it is strongly supercritical for all $x \in J$.
\end{remark}

We now prove Proposition \ref{prop:ss1}. Let us notice that it suffices to show that (ii) in Definition \ref{def:ss} is equivalent to (ii') in the statement above. To see the (ii')$\Longrightarrow$(ii) implication, notice the inclusions
\begin{equation} \label{eq:traninc}
\left(\Theta^{(x)}\right)^c \subseteq \Gamma^{(x)} = \bigcap_{n \in \N} \left\{ \lim_{t \rightarrow +\infty} \xi^{(x)}_{t}(\tilde{J}_n) = 0 \right\} \subseteq \bigcap_{n \in \N} \left\{ \lim_{k \rightarrow +\infty} \xi^{(x)}_{k}(\tilde{J}_n) = 0 \right\}.
\end{equation} Now, for each $n \in \N$ let us write $A_{n}^{(x)}:=\left\{ \lim_{k \rightarrow +\infty} \xi^{(x)}_{k}(\tilde{J}_n) = 0 \right\}$. Notice that, since the sequence $(\tilde{J}_n)_{n \in \N}$ is increasing, we have that $(A^{(x)}_{n})_{n \in \N}$ is decreasing and therefore that
\begin{equation} \label{eq:traninc2}
P_x\left(\bigcap_{n \in \N} \left\{ \lim_{k \rightarrow +\infty} \xi_{k}(\tilde{J}_n) = 0 \right\}\right) = \lim_{n \rightarrow +\infty} P_x(A_{n}).
\end{equation} Therefore, if (ii') holds then it follows that $P_x(\Theta^c) = P_x( A_{n} )$ for some $n$ and, by \eqref{eq:traninc} and \eqref{eq:traninc2}, we conclude that (ii) holds. On the other hand, if (ii) holds then by \eqref{eq:traninc} we have 
\begin{equation} \label{eq:tran3}
P_x(\Theta^c)= \lim_{n \rightarrow +\infty} P_x \left( \lim_{t \rightarrow +\infty} \xi_{t}(\tilde{J}_n)= 0\right).
\end{equation} Thus, if we show that for all $n \in \N$ sufficiently large so that $x \in \hat{J}_n$ and $\nu(\tilde{J}_n) > 0$ we have  
\begin{equation} \label{eq:tranimpl}
\lim_{k \rightarrow +\infty} \xi^{(x)}_{k}(\tilde{J}_n) = 0 \Longrightarrow \lim_{t \rightarrow +\infty} \xi^{(x)}_{t}(\tilde{J}_{n+1}) = 0
\end{equation} then, from \eqref{eq:tran3} and the inclusion $\left(\Theta^{(x)}\right)^c \subseteq \{ \lim_{k \rightarrow +\infty} \xi^{(x)}_{k}(\tilde{J}_n) = 0\}$, by iterating \eqref{eq:tranimpl} we conclude that for $n$ sufficiently large 
$$
P_x(\Theta^c)\leq P_x\left( \lim_{k \rightarrow +\infty} \xi_{k}(\tilde{J}_n) = 0\right) \leq \lim_{m \rightarrow +\infty} P_x \left( \lim_{t \rightarrow +\infty} \xi_{t}(\tilde{J}_m)= 0\right) = P_x(\Theta^c),
$$ which immediately gives (ii'). Now, \eqref{eq:tranimpl} follows at once from the next lemma.

\begin{lemma} \label{lemma:cru} For any $m \in \N$ and $B \in \B_J$ such that $\hat{J}_m \neq \emptyset$ and $\nu(B)>0$ there exists $T \in \N$ satisfying that for any $x \in \hat{J}_m$ one has
	$$
	\limsup_{t \rightarrow +\infty} \xi^{(x)}_t(\tilde{J}_m) > 0 \Longrightarrow \lim_{k \rightarrow +\infty} \xi^{(x)}_{kT}(B) = +\infty.
	$$
\end{lemma}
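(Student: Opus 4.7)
\emph{Proof plan.} We construct an embedded supercritical Galton--Watson (GW) process inside $\xi^{(x)}$ by applying the lower--bound estimate (LB) from Proposition 6.3, then use the hypothesis $\limsup_t\xi_t(\tilde J_m)>0$ to argue that this embedded GW is seeded infinitely often and therefore survives almost surely.

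\emph{Reduction and GW embedding.} Since $\hat J_n\nearrow J$ by (B1), we may enlarge $m$ if necessary so that $\nu(B\cap\hat J_m)>0$; (B2) then produces $B^*\in\mathcal C_X$ with $B^*\subseteq B\cap\hat J_m$ and $\nu(B^*)>0$. Since $\xi_t(B)\geq\xi_t(B^*)$, it suffices to prove $\lim_k\xi_{kT}(B^*)=+\infty$ for a suitable $T$. By Proposition 6.3, condition (LB) yields $T\in\N$ and a random variable $L$ with $\E(L)>1$ such that $L\preceq\xi_T^{(y)}(B^*)$ for every $y\in\hat J_m$. Setting $N_k:=\xi^{(x)}_{kT}(B^*)$ and using the branching/strong Markov property at time $kT$ together with $B^*\subseteq\hat J_m$, we obtain
$$
N_{k+1}\;\succeq\;\sum_{u\in\overline\xi_{kT}^{(x)}:\,u_{kT}\in\hat J_m}L^{(u)},
$$
where the $L^{(u)}$ are conditionally i.i.d.\ copies of $L$. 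Hence $(N_k)$ dominates a supercritical GW process with immigration at rate $\xi_{kT}(\hat J_m)$ at each step.

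\emph{Seeding and conclusion.} For $y\in\tilde J_m$ we have $\E_y(D^{(y)}_\infty)=1$ and $\E_y((D^{(y)}_\infty)^2)=\Phi_y\leq m$, so Paley--Zygmund gives the \emph{uniform} bound $P_y(D^{(y)}_\infty\geq 1/2)\geq 1/(4m)$. At the same time, Theorem 3.3 (applicable because $\Phi_y<\infty$) yields $\xi^{(y)}_s(B^*)/\E_y(\xi_s(B^*))\to D^{(y)}_\infty$ in $L^2$ while $\E_y(\xi_s(B^*))\to+\infty$, so on $\{D^{(y)}_\infty\geq 1/2\}$ one has $\xi_s^{(y)}(B^*)\to+\infty$ in probability. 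Combining these with irreducibility (B3), we extract $S\in T\N$ and $\delta>0$ with $\inf_{y\in\tilde J_m}P_y(\xi_S^{(y)}(B^*)\geq 1)\geq\delta$. Applying this bound via the strong Markov property at the (random) times at which $\xi_t(\tilde J_m)\geq 1$---which by hypothesis occur at arbitrarily late times---a conditional Borel--Cantelli argument yields, almost surely on the hypothesis event, infinitely many $k$ with $\xi_{kT}(B^*)\geq 1$. Each such instance seeds an independent copy of the supercritical GW from the previous step, surviving with probability $q>0$; by a second Borel--Cantelli at least one survives a.s., and its geometric growth forces $N_k\to+\infty$.

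\emph{Main obstacle.} The crux is the uniform lower bound $\inf_{y\in\tilde J_m}P_y(\xi_S^{(y)}(B^*)\geq 1)\geq\delta$. The set $\tilde J_m$ controls only the variance of $D^{(y)}_\infty$ via $\Phi_y\leq m$, but does not a priori control the $h$-value of $y$, so (LB) cannot be applied directly on $\tilde J_m\setminus\hat J_m$. The Paley--Zygmund plus $L^2$-rate combination provides the needed uniform probability estimate, while (B3) is used to ensure that motions starting from states with extreme $h$-values still contribute descendants to the seeding mechanism.
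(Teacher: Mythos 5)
Your proof follows the same overall strategy as the paper's: reduce $B$ to a set $B^*\subseteq B$ on which (LB) applies, embed supercritical Galton--Watson processes into $\xi^{(x)}$ along the lattice times $kT$ via the stochastic domination of Proposition~\ref{prop:b1}, and then use the hypothesis $\limsup_t\xi_t(\tilde J_m)>0$ to iteratively seed independent copies of the GW process, concluding with a Borel--Cantelli argument that at least one survives. The reduction to $B^*\subseteq B\cap\hat J_m$ via (B2) is the same pruning as in the paper, and the renewal scheme at the end mirrors the paper's $(Z^{(n)})_{n\in\N}$ construction.

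The genuine gap is in the step you yourself flag as the crux: the asserted uniform bound
$\inf_{y\in\tilde J_m}P_y(\xi_S^{(y)}(B^*)\geq 1)\geq\delta$
for a \emph{single} finite time $S$. Paley--Zygmund does give the uniform bound $P_y(D^{(y)}_\infty\geq 1/2)\geq 1/(4m)$ over $\tilde J_m$, and Theorem~\ref{theo:main} then says that on $\{D^{(y)}_\infty\geq 1/2\}$ the variable $\xi_s^{(y)}(B^*)$ tends to infinity in probability as $s\to+\infty$. But this only yields, for each fixed $y$, a time $S_y$ with $P_y(\xi_{S_y}^{(y)}(B^*)\geq 1)\geq 1/(8m)$; nothing in the hypotheses makes $S_y$ bounded over $\tilde J_m$. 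The rate of the $L^2$-convergence $W^{(y)}_s(B^*,B^*)\to D^{(y)}_\infty$ and the growth of $\E_y(\xi_s(B^*))=h(y)p(s)e^{(r(m_1-1)-\lambda)s}(\nu(B^*)+s_{B^*}(y,s))$ both depend on $h(y)$ and on the rate at which $s_{B^*}(y,\cdot)\to 0$, and assumption (A2-iii) controls the latter uniformly only on the sets $J_n$, not on $\tilde J_m$; similarly, $h$ is not bounded away from $0$ or $\infty$ on $\tilde J_m$. Invoking irreducibility (B3) does not help: it is a purely qualitative condition (positivity of certain transition probabilities), and supplies no quantitative lower bound that is uniform in the starting point. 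So the sentence ``Combining these with irreducibility (B3), we extract $S\in T\N$ and $\delta>0\ldots$'' does not follow from the stated ingredients, and the subsequent conditional Borel--Cantelli argument does not go through with what you have established. It is only fair to note that the paper's own proof passes lightly over this same point (it seeds $Z^{(n)}$ from a particle $y\in\tilde J_m$ while the coupling \eqref{eq:domi} is stated only for $y\in\hat J_m$), so identifying the obstacle is a merit of your write-up; but the fix you propose does not yet close it.
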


\begin{proof} The idea is to couple the sequence $(\xi^{(x)}_{kT}(B))_{k \in \N}$ together with an i.i.d. sequence $(Z^{(n)})_{n \in \N}$ of supercritical single-type Galton-Watson branching processes such that if at least one $Z^{(n)}$ survives on the event $\{\limsup_{t \rightarrow +\infty} \xi^{(x)}_t(\tilde{J}_m) > 0\}$ then $\xi^{(x)}_{kT}(B)$ tends to infinity as $k \rightarrow +\infty$. 
	
We proceed as follows. First we notice that, since condition (LB) holds by Proposition \ref{prop:b1}, there exists a random variable $L_{m,B}$ with $\E(L_{m,B})>1$ and a time $T \in \N$ such that for all $y \in \hat{J}_m$ and $t \geq T$ we have
\begin{equation} \label{eq:domi}
L_{m,B} \preceq \xi^{(y)}_t(B).
\end{equation} Next, given a fixed $x \in \hat{J}_m$, we define the process $V^{(1)}:=(V^{(1)}_j)_{j \in \N}$ by the formula
$$
V^{(1)}_j:= \xi^{(x)}_{jT}(B)
$$ and observe that for each $j \in \N$ we have
$$
V^{(1)}_{j+1} \geq \sum_{u \in \overline{\xi}^{(x)}_{jT}(B)} \xi^{(u)}_T(B),
$$ where $\overline{\xi}^{(x)}_{jT}(B)$ denotes the subcollection of particles of $\overline{\xi}^{(x)}_{jT}$ which are located inside the subset $B$ and, for $u \in \overline{\xi}^{(x)}_{t}$, $\xi^{(u)}$ is the sub-dynamics of $\xi^{(x)}$ associated to the particle $u$ starting at time $t$. 
Since for each $u \in \overline{\xi}^{(x)}_{jT}(B)$ we have that $L_{B,m} \preceq \xi^{(u)}_T(B)$ by \eqref{eq:domi}, it follows that (by enlarging the current probability space if necessary) one can couple $V^{(1)}$ with a Galton-Watson process $Z^{(1)}:=(Z^{(1)}_j)_{j \in \N}$ with offspring distribution given by $L_{m,B}$, in such a way that $V^{(1)}_j \geq Z^{(1)}_j$ holds for all $j \in \N$. Therefore, if $Z^{(1)}$ survives then $Z^{(1)}_j$ must tend to infinity as $j \rightarrow +\infty$ and, consequently, so must $\xi^{(x)}_{jT}(B)$. In this case, we decouple $\xi^{(x)}$ from the remaining $Z^{(n)}$ (for $n \geq 2$) by taking these to be independent from $\xi^{(x)}$. If $Z^{(1)}$ dies out, however, we proceed as follows:
\begin{enumerate}
	\item [i.] Define $\tau^{(1)}:=\inf \{ j \in \N : Z^{(1)}_j = 0\}$. Notice that $Z^{(1)}$ dies out if and only if $\tau^{(1)}<+\infty$.
	\item [ii.] If $\xi^{(x)}_{sT}(\tilde{J}_m)=0$ for all $s \geq \tau^{(1)}$, then decouple the $(Z^{(n)})_{n \geq 2}$ from $\xi^{(x)}$ as before.
	\item [iii.] If $\xi^{(x)}_{sT}(\tilde{J}_m)>0$ for some (random) $s \geq \tau^{(1)}$, then choose some $y \in \overline{\xi}^{(x)}_{sT}(\tilde{J}_m)$ at random and define the process $V^{(2)}=(V^{(2)}_j)_{j \in \N}$ according to the formula
	$$
	V^{(2)}_{j}:= \xi^{(y)}_{(\lceil s\rceil+j)T}(B),
	$$ where $\lceil s \rceil$ here denotes the smallest integer greater than or equal to $s$. Let us observe that, by construction, there exists a (random) $k \in \N$ such that $V^{(2)}_j \leq \xi^{(x)}_{(k+j)T}(B)$ for all $j \in \N$. By a similar argument than the one carried out for $V^{(1)}$, it is possible to couple $V^{(2)}$ with a Galton-Watson process $Z^{(2)}$ which is independent of $Z^{(1)}$ but has the same distribution, in such a way that $V^{(2)}_j \geq Z^{(2)}_j$ for all $j \in \N$. If $Z^{(2)}$ survives then, by the considerations above, $\xi^{(x)}_{jT}$ must tend to infinity as $j \rightarrow +\infty$. If not, then one can repeat this procedure to obtain a branching process $Z^{(3)}$ and so on.	
\end{enumerate} Since every $Z^{(n)}$ has the same \textit{positive} probability of survival, it follows that at least one of them will survive on the event $\{\limsup_{t \rightarrow +\infty} \xi^{(x)}_t(\tilde{J}_m) > 0\}$, and so the result now follows.
\end{proof}

\subsection{Part IV}

We now conclude by showing all implications in the statement of Theorem \ref{theo:main3}. 

First, let us observe that the condition $\Phi_x < +\infty$ implies that $\sigma(x)<1$. Indeed, if $\Phi_x < +\infty$ then by Theorem \ref{theo:main2} we have $\E_x(D_\infty)=1$ so that  $\sigma(x)=P_x(D_\infty=0) < 1$ necessarily holds. Thus, if (B1) holds then $\sigma \neq \mathbf{1}$ and therefore (i) must imply (ii). 

\mbox{That (ii) $\Longrightarrow$ (iii)} is obvious, so we move on to (iii) $\Longrightarrow$ (iv). Take $x \in J$ such that $\eta(x)=\sigma(x)$. Note that, by the argument given above, $\sigma(x)<1$ so that if $\eta(x)=\sigma(x)$ then $\xi^{(x)}$ is supercritical. It remains to verify (ii') of Proposition \ref{prop:ss1}. But (B2) together with Theorem \ref{theo:main} and \eqref{eq:assump2B1} imply that for any $B \in \B_J$ with $\nu(B)> 0$
$$
\limsup_{n \rightarrow +\infty} P_{x} ( \xi_n(B) = 0 | \Lambda )  \leq \limsup_{n \rightarrow +\infty} P_{x} ( \xi_n(B^*) = 0 | \Lambda ) = 0, 
$$ from which a straightforward calculation yields that  
$$
P_x \left( \left\{ \limsup_{n \rightarrow +\infty} \xi_n(B) > 0 \right\} \cap \Lambda \right) = P_x(\Lambda).
$$ Therefore, since we also have the inequalities
$$
P_x \left( \left\{ \limsup_{n \rightarrow +\infty} \xi_n(B) > 0 \right\} \cap \Lambda \right) \leq P_x\left( \limsup_{n \rightarrow +\infty} \xi_n(B) > 0\right) \leq P_x(\Theta),
$$ if $\eta(x)=\sigma(x)$ then we have $P_x(\Theta)=P_x(\Lambda)$ and so (ii') follows. This shows that (iii) $\Longrightarrow$ (iv). 

The implication (iv) $\Longrightarrow$ (v) is also obvious, so it remains to show (v) $\Longrightarrow$ (i). For this purpose, we note that, if $g \neq \mathbf{1}$ is a fixed point of $G$, Proposition \ref{prop:fpG} yields that $g(y) < 1$ for \textit{all} $y \in J$. Thus, by (B2) one can find $B \in \mathcal{C}_X$ with $\nu(B)>0$ and $\varepsilon > 0$ such that $\sup_{y \in B} g(y)<1-\varepsilon$. Now, since $g$ is a fixed point of $G$, we have
$$
g(x) = \lim_{n \rightarrow +\infty} G^{(n)}(g)(x) = \lim_{n \rightarrow +\infty}\E_{x} \left( \prod_{u \in \overline{\xi}_n} g(u_n) \right) \leq \liminf_{n \rightarrow +\infty} \E_{x}\left( (1-\varepsilon)^{\xi_n(B)}\right)
$$ where for the last inequality we have used the fact that $g \leq 1$. Moreover, let us observe that 
\begin{equation}
\label{eq:sscota}
\E_{x}\left( (1-\varepsilon)^{\xi_n(B)}\right) \leq P_{x}( N_n = 0) + P_{x}( \Theta^c \cap \{ N_n > 0\}) + 
\E_{x}\left( (1-\varepsilon)^{\xi_n(B)}\mathbbm{1}_\Theta\right),
\end{equation} where, since $\{N_n^{(x)} > 0\} \searrow \Theta^{(x)}$ as $n \rightarrow +\infty$, we have that 
$$
\lim_{n \rightarrow +\infty} P_{x}(N_n = 0) = \eta(x) \hspace{1cm}\text{ and }\hspace{1cm}\lim_{n \rightarrow +\infty} P_{x}(\Theta^c \cap \{N_n > 0\}) = P_{x} ( \Theta^c \cap \Theta) = 0.
$$ Hence, if we could show that 
\begin{equation}
\label{eq:sscota2}
\liminf_{n \rightarrow +\infty} \E_x\left( (1-\varepsilon)^{\xi_n(B)}\mathbbm{1}_{\Theta}\right) = 0,
\end{equation} then we would immediately obtain that $g(x) \leq \eta(x)$. Together with the obvious reverse inequality, this would yield $\eta(x)=g(x)$ and hence, by Proposition \ref{prop:fpG}, that $\eta \equiv g$. Since we have that $\eta \neq \mathbf{1}$ by the strong supercriticality of $\xi^{(x)}$, (v) $\Longrightarrow$ (i) would follow at once. Thus, let us show \eqref{eq:sscota2}.
Observe that \eqref{eq:sscota2} immediately follows if we can show that for any $K > 0$ 
\begin{equation} \label{eq:sscota3}
\liminf_{n \rightarrow +\infty} P_x ( \xi_n(B) \leq K | \Theta) = 0.
\end{equation} Since $\xi^{(x)}$ is strongly supercritical, by (ii') of Proposition \ref{prop:ss1} we have that \eqref{eq:sscota3} is then equivalent to 
\begin{equation} \label{eq:sscota4}
\liminf_{n \rightarrow +\infty} P_x \left( \xi_n(B) \leq K \Bigg| \limsup_{j \rightarrow +\infty} \xi_{j}(\tilde{J}_k) > 0\right) = 0
\end{equation} if $k \in \N$ is taken sufficiently large.
But \eqref{eq:sscota4} is now a straightforward consequence of Lemma \ref{lemma:cru}, so that \eqref{eq:sscota2} follows. 

\section{Proof of Proposition \ref{prop:lyapunov1} and Theorem \ref{theo:main5}}\label{sec:rpos}

We now prove our results for $\lambda$-positive processes, namely Proposition \ref{prop:lyapunov1} and Theorem \ref{theo:main5}.

\subsection{Proof of Proposition \ref{prop:lyapunov1}}

This is essentially a consequence of the following result. 

\begin{proposition}\label{prop:Lyapunov} If $X$ is $\lambda$-positive and admits a Lyapunov functional $V$ as in Definition \ref{def:lyapunov}, 
	then $\mu(g) < +\infty$ for any $\B_J$-measurable $g:J \rightarrow \R$ such that $\|\frac{g}{1+V}\|_\infty < +\infty$ and, furthermore, there exist constants $C,\alpha > 0$ such that for all $x \in J$, $t \geq 0$ and any $g$ as above one has
	$$
	|\tilde{\E}_x(g(X_t)) - \mu(g)| \leq C (1+V(x)) e^{-\alpha t} \left\|\frac{g-\mu( g)}{1+V}\right\|_\infty.
	$$
	\end{proposition}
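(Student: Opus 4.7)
The plan is to apply the Harris-type theorem in its Hairer--Mattingly formulation, combining the minorization (V1) with the geometric drift (V2). Throughout, write $P_s f(x) := \tilde{\E}_x(f(X_s))$ for the semigroup of $X$ under $\tilde{P}$ and let $t_0>0$ be the time appearing in (V1).

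\emph{Step 1: integrability of $V$ under $\mu$.} Iterating (V2) at multiples of $t_0$ yields $\tilde{\E}_x(V(X_{nt_0})) \leq V(x) + K/(1-e^{-\gamma t_0})$ for every $n\in\N$. Since $\mu$ is the invariant probability measure under $\tilde{P}$ and $X$ is ergodic (by $\lambda$-positivity), we have $\tilde{\E}_x((V\wedge M)(X_{nt_0})) \to \mu(V\wedge M)$ as $n\to\infty$ for every $M>0$. Hence $\mu(V\wedge M)\leq V(x)+K/(1-e^{-\gamma t_0})$, and monotone convergence in $M$ gives $\mu(V)<+\infty$. In particular, $\mu(|g|) \leq \|g/(1+V)\|_\infty (1+\mu(V)) < +\infty$ for any $g$ as in the statement.

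\emph{Step 2: Harris contraction on the discrete skeleton.} For $\beta>0$ to be tuned, introduce the weighted semi-norm
\[
\|f\|_\beta := \sup_{x \neq y} \frac{|f(x)-f(y)|}{2+\beta V(x)+\beta V(y)}.
\]
I would show that for appropriate $\beta$ there exists $\theta\in(0,1)$ such that $\|P_{t_0}f\|_\beta \leq \theta\|f\|_\beta$ for every $f$ with $\|f\|_\beta<+\infty$. The standard argument separates two regimes. On the small set $\{V(x)+V(y)\leq R\}$, (V1) gives the total-variation bound $\|P_{t_0}(x,\cdot) - P_{t_0}(y,\cdot)\|_{TV} \leq 2(1-\alpha_R)$, which through a Kantorovich-type coupling controls $|P_{t_0}f(x) - P_{t_0}f(y)|$ by a factor strictly less than $2+\beta R$ times $\|f\|_\beta$. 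On the complement, the Lipschitz-type estimate $|P_{t_0}f(x)-P_{t_0}f(y)| \leq \|f\|_\beta \,\tilde{\E}(2+\beta V(X_{t_0}^{(x)}) + \beta V(X_{t_0}^{(y)}))$ combined with (V2) yields a bound of the form $\theta'(2+\beta V(x)+\beta V(y))\|f\|_\beta$ with $\theta'<1$, provided $R > 2K/(1-e^{-\gamma t_0})$. A standard tuning of $\beta$ small (depending on $R$ and $\alpha_R$) then produces a single $\theta<1$ valid in both regimes.

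\emph{Step 3: from contraction to the stated inequality.} Set $h := g-\mu(g)$, so that $\mu(h)=0$ and $|h(x)-h(y)| \leq (2+V(x)+V(y))\|h/(1+V)\|_\infty$, whence $\|h\|_\beta \leq (1/\beta)\|h/(1+V)\|_\infty$. Iterating Step 2 yields $\|P_{nt_0}h\|_\beta \leq \theta^n\|h\|_\beta$, and the invariance of $\mu$ under $P_{nt_0}$ gives $\mu(P_{nt_0}h)=\mu(h)=0$, whence
\[
|P_{nt_0}h(x)| = \left|\int (P_{nt_0}h(x) - P_{nt_0}h(y))\,d\mu(y)\right| \leq \theta^n\|h\|_\beta(2+\beta V(x)+\beta\mu(V)).
\]
For arbitrary $s\geq 0$, write $s=nt_0+r$ with $r\in[0,t_0)$; a short-time bound from (V2) yields $\|P_r h/(1+V)\|_\infty \leq (1+K)\|h/(1+V)\|_\infty$, and $\mu(P_r h) = 0$. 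Applying the previous display to $P_r h$ in place of $h$ and setting $\alpha:=-(\log\theta)/t_0$ gives the announced inequality with a constant $C$ depending only on $\beta,\theta,K,\mu(V),t_0$.

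The hard part is the contraction in Step 2: although the Hairer--Mattingly strategy is standard, the simultaneous calibration of $R$ and $\beta$ must be executed carefully so as to obtain a single $\theta<1$ valid both on the small set (where (V1) dominates) and on its complement (where (V2) dominates). Steps 1 and 3 are routine consequences of the contraction, the invariance of $\mu$, and the drift estimate (V2).
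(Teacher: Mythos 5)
Your proposal is correct and takes essentially the same approach as the paper: the paper's proof simply cites the Hairer--Mattingly Harris theorem together with Meyn--Tweedie's drift criteria and then observes (as in your Step 1) that the drift condition gives $\mu(V)<+\infty$, omitting all further details. Your Steps 2 and 3 supply precisely those omitted details — the weighted-semi-norm contraction on the $t_0$-skeleton followed by the passage to continuous time and the $\mu$-centering argument — so the two proofs are not merely in the same spirit but are the same argument, with yours written out in full.
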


\begin{proof} This result is a careful combination of \cite[Theorem 3.6]{hairer2016} and \mbox{\cite[Theorem 4.3, Theorem 6.1]{meyn1993},} using the fact that since 
$$
\mu(g) \leq \left\| \frac{g}{1+V}\right\|_\infty \mu(1+V) = \left\| \frac{g}{1+V}\right\|_\infty (1+ \mu(V))
$$ it suffices to check that $\mu(V) < +\infty$ to see that $\mu(g) < +\infty$ for any function $g$ as in the statement. We omit the details.
\end{proof}

Observe that if $X$ admits a Lyapunov functional $V$ satisfying (V4) then, by Proposition \ref{prop:Lyapunov}, \mbox{if we set}
$$
\mathcal{C}_X:=\left\{ B \in \B_J : \left\|\frac{\mathbbm{1}_B}{h}\right\|_\infty <+\infty\right\}
$$ then for any $B \in \mathcal{C}_X$ and $x \in J$ we have that 
$$
|s_B(x,t)|=\left|\tilde{\E}_x\left(\frac{\mathbbm{1}_B}{h}(X_t)\right) - \mu\left(\frac{\mathbbm{1}_B}{h}\right)\right| \leq 2C(1+V(x))e^{-\alpha t}\left\|\frac{\mathbbm{1}_B}{h}\right\|_\infty
$$ so that if (V3) holds then (A2-iii) is automatically satisfied. Furthermore, we always have that
$$
s_B(x,t) \leq \tilde{\E}_x\left(\frac{\mathbbm{1}_B}{h}(X_t)\right) \leq \left\|\frac{\mathbbm{1}_B}{h}\right\|_\infty < +\infty
$$ for any such $B$, so that (A2-iv) is also satisfied. Since we have already seen that \eqref{A2} and (A2-i-ii) hold whenever $X$ is $\lambda$-positive, this shows that (A2) is satisfied. On the other hand, if (V4) holds then again by Proposition \ref{prop:Lyapunov} we have that for any $x \in J$ and $t \geq 0$ 
\begin{equation}\label{eq:cotam}
\E_x(M_t^2) = \frac{e^{\lambda t}}{h(x)} \tilde{\E}_x(h(X_t)) \leq \frac{e^{\lambda t}}{h(x)} \left( \mu(h) + 2C(1+V(x))\left( \left\|\frac{h}{1+V}\right\|_\infty + \mu(h) \right)\right) < +\infty
\end{equation} so that (A1) is also satisfied. Finally, since from \eqref{eq:cotam} we can obtain in fact the bound
$$
\E_x(M_t^2) \leq C_{h,\mu}\frac{e^{\lambda t}}{h(x)} (1+V(x))
$$ for some constant $C_{h,\mu} > 0$, a straightforward computation shows that
$$
\Phi_x \leq \frac{m_2-m_1}{r(m_1-1)-\lambda} C_{h,\mu} \cdot \frac{1+V(x)}{h(x)}.
$$ In particular, this yields that (B1) immediately holds. Furthermore, since for any given $B \in \B_J$ we have that $B \cap J_n \in \mathcal{C}_X$ for all $n$, we obtain also (B2) by taking $B^*=B \cap J_n$ for $n$ large enough so as to guarantee that $\nu(B\cap J_n) \geq \frac{\nu(B)}{2}$. This concludes the proof of Proposition \ref{prop:lyapunov1}. 

\subsection{Proof of Theorem \ref{theo:main5}}
For the proof we shall essentially follow the approach in \cite{asmussen1976,englander2010,ChenShio2007}, introducing a few  modifications to their methods to adapt to our different hypotheses. 

First, given any bounded and $\B_J$-measurable function $f: J \rightarrow \R$, for each $x \in J$ let us define the process $U^{(x),f}=(U^{(x),f}_t)_{t \geq 0}$ by the formula
\begin{equation}\label{eq:defu}
U^{(x),f}_t:= \frac{1}{h(x)}\sum_{u \in \overline{\xi}^{(x)}_t} h(u_t)f(u_t)e^{-(r(m_1-1)-\lambda)t}.
\end{equation} Note that if we take $f \equiv 1$ then we recover our Malthusian martingale, i.e $U^{(x),f}_t= D^{(x)}_t$ for all $t$. Our goal is to show that there exists a full $P$-measure set $\Omega^{(x)}$ such that for any $\omega \in \Omega^{(x)}$
\begin{equation}\label{eq:convu}
\lim_{t \rightarrow +\infty}U^{(x),f}_t (\omega) =  \nu(h f) \cdot D^{(x)}_\infty (\omega).
\end{equation} for all functions of the form $f=\frac{\mathbbm{1}_B}{h}$ with $B \in \mathcal{C}_X$ such that $\nu( \partial B)=0$. By Lemma \ref{lema:mt1}, this will automatically imply \eqref{eq:convas} since $p(t)\equiv 1$ in the $\lambda$-positive case.
The first step in the proof of \eqref{eq:convu} is to obtain the desired convergence along lattice times, i.e. along time sequences $(t_n)_{n \in \N} \subseteq \R_{> 0}$ of the form $t_n=n\delta$ for $\delta > 0$ arbitrarily small. This will be a consequence of the next lemma. For simplicity, in the sequel we suppress the dependence of $U^{(x),f}$ on $f$ from the notation unless it becomes strictly necessary.

\begin{lemma}\label{lema:18}
	For any $\delta > 0$ we have as $n \rightarrow +\infty$
	$$
	U^{(x)}_{2n\delta} - \E_x(U_{2n\delta}|\F_{n\delta}) \overset{as}{\longrightarrow} 0,
	$$ where $(\F_t)_{t \geq 0}$ is the filtration generated by $\xi^{(x)}$.
\end{lemma}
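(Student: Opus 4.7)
The plan is to establish the stated almost sure convergence via a standard second-moment estimate combined with the Borel--Cantelli lemma. First, I would invoke the branching Markov property at time $n\delta$ to write
$$
U^{(x),f}_{2n\delta} = \frac{e^{-(r(m_1-1)-\lambda)n\delta}}{h(x)} \sum_{u \in \overline{\xi}^{(x)}_{n\delta}} h(u_{n\delta})\, \tilde{U}^u_{n\delta},
$$
where, conditionally on $\F_{n\delta}$, the random variables $(\tilde{U}^u_{n\delta})_{u \in \overline{\xi}_{n\delta}^{(x)}}$ are independent and each has the law of $U^{(u_{n\delta}),f}_{n\delta}$. Subtracting the conditional mean inside the sum and using this conditional independence to kill all cross-terms then leads to
$$
\E_x\left(\left(U^{(x),f}_{2n\delta} - \E_x(U^{(x),f}_{2n\delta}\,|\,\F_{n\delta})\right)^2\right) \leq \frac{\|f\|_\infty^2\, e^{-2(r(m_1-1)-\lambda)n\delta}}{h^2(x)}\, \E_x\left(\sum_{u \in \overline{\xi}^{(x)}_{n\delta}} h^2(u_{n\delta})\, \Phi_{u_{n\delta}}\right),
$$
after bounding each conditional variance by its conditional second moment and using $(U^{(y),f}_t)^2 \leq \|f\|_\infty^2 (D^{(y)}_t)^2$ together with the submartingale property $\E_y(D_t^2) \leq \Phi_y$ coming from Theorem \ref{theo:main2}.

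Next, I would estimate the right-hand side using the Lyapunov functional. The many-to-one lemma reduces the inner expectation to $e^{r(m_1-1)n\delta}\E_x(h^2(X_{n\delta})\Phi_{X_{n\delta}})$. From \eqref{eq:compvc} one has the pointwise bound $\Phi_y \leq C(1+V(y))/h(y)$, so $h^2(y)\Phi_y \leq C\, h(y)(1+V(y))$. The change of measure \eqref{eq:mcof} then yields
$$
\E_x\left(h(X_{n\delta})(1+V(X_{n\delta}))\right) = h(x)e^{-\lambda n\delta}\, \tilde{\E}_x\left(1+V(X_{n\delta})\right) \leq h(x)e^{-\lambda n\delta}(1+K+V(x)),
$$
where the last inequality uses the Lyapunov drift bound (V2). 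Collecting the three exponential factors arising from the normalization of $U$, the many-to-one lemma, and the change of measure, everything combines into a single factor $e^{-(r(m_1-1)-\lambda)n\delta}$, giving
$$
\E_x\left(\left(U^{(x),f}_{2n\delta} - \E_x(U^{(x),f}_{2n\delta}\,|\,\F_{n\delta})\right)^2\right) \leq C_{x,f,V}\, e^{-(r(m_1-1)-\lambda)n\delta},
$$
which is summable in $n$ by Assumption (I2). A Chebyshev inequality and the Borel--Cantelli lemma then yield the claimed almost sure convergence.

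The main obstacle is the careful bookkeeping of the three competing exponential factors, whose sum must come out strictly negative for the Borel--Cantelli argument to apply. This cancellation ultimately hinges on the supercriticality condition $r(m_1-1)>\lambda$ from (I2), and is precisely where the Malthusian normalization built into $U$ plays its role. Observe that only the pointwise estimate $\Phi_y \leq C(1+V(y))/h(y)$ from Proposition \ref{prop:lyapunov1} is used, so the stronger integrability assumption $\overline{\Phi}_x < +\infty$ from Theorem \ref{theo:main5} is not needed for this particular step.
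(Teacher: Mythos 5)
Your proof is correct and follows essentially the same route as the paper: condition at time $n\delta$, use the conditional independence of the sub-dynamics to eliminate cross-terms, apply the many-to-one lemma, convert to the $h$-transformed expectation, and close with the Lyapunov drift bound (V2) and Borel--Cantelli. The only (minor) difference is that the paper tracks $\E_{X_{n\delta}}(D_{n\delta}^2)$ exactly via the explicit decomposition from the proof of Theorem~\ref{theo:main2} and a Markov-property rewrite, whereas you shortcut this by the monotone bound $\E_y(D_t^2)\leq\Phi_y$ combined with $\Phi_y\leq C(1+V(y))/h(y)$ from \eqref{eq:compvc}; this is slightly more economical and equally valid under the hypotheses of Theorem~\ref{theo:main5}, and your observation that $\overline{\Phi}_x<+\infty$ is not needed for this particular lemma also matches the paper (it enters only in the subsequent step handling $\E_x(U_{2n\delta}\mid\F_{n\delta})-\nu(hf)D_{n\delta}$).
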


\begin{proof} By the Borel-Cantelli lemma and Markov's inequality, it will suffice to show that
	\begin{equation} \label{eq:bcsum}
	\sum_{n \in \N} \E_x\left(\left(U_{2n\delta} - \E_x(U_{2n\delta}|\F_{n\delta})\right)^2 \right) < +\infty.
	\end{equation} To this end, we observe that, by proceeding as in the proof of \cite[Lemma 18]{englander2010} for the case $p=2$, one has for any $t \geq 0$
	\begin{align*} 
	\E_x\left(\left(U_{2t} - \E_x(U_{2t}|\F_{t})\right)^2 \Big|\F_{t}\right) & = \left[\frac{e^{-(r(m_1-1)-\lambda)t}}{h(x)}\right]^2\sum_{u \in \overline{\xi}^{(x)}_t} h^2(u_t)\E_{u_t}( (U_t - \E_u(U_t))^2)\\
	& \leq \left[\frac{e^{-(r(m_1-1)-\lambda)t}}{h(x)}\right]^2 \sum_{u \in \overline{\xi}^{(x)}_t} h^2(u_t) \E_{u_t}(U^2_t)\\
	& \leq  \left[\frac{\|f\|_\infty e^{-(r(m_1-1)-\lambda)t}}{h(x)}\right]^2 \sum_{u \in \overline{\xi}^{(x)}_t} h^2(u_t) \E_{u_t}(D_t^2)
	\end{align*} so that by Lemma \ref{lema:mt1} we obtain
	$$
	\E_x\left(\left(U_{2t} - \E_x(U_{2t}|\F_{t})\right)^2\right) \leq \left[\frac{\|f\|_\infty e^{-(r(m_1-1)-\lambda)t}}{h(x)}\right]^2 e^{r(m_1-1)t} \E_x( h^2	(X_t)\E_{X_t}(D_t^2)).
	$$ Now, by the proof of Theorem \ref{theo:main2} we now that 
	\begin{equation} \label{eq:dcomp}
	\E_{X_t}(D_t^2)= \E_{X_t}(M_t^2)e^{-r(m_1-1)t} + (m_2-m_1)r\int_0^t \E_{X_t} (M_s^2) e^{-r(m_1-1)s}ds.
	\end{equation} so that, using the Markov property for $X$, we conclude that
	\begin{equation} \label{eq:ec}
	\E_x( h^2(X_t)\E_{X_t}(D_t^2)) = \E_x(h^2(X_{2t}))e^{-(r(m_1-1)-2\lambda)t} + (m_2-m_1)r\int_0^t \E_x( h^2(X_{t+s}))e^{-(r(m_1-1)-2\lambda)s}ds.
	\end{equation} Upon observing that the second term in the right-hand side of \eqref{eq:ec} can be rewritten as  
	\begin{align*}
	e^{(r(m_1-1)-2\lambda)t}\left[(m_2-m_1)r\int_t^{2t}\E_{x}(h^2(X_s))e^{-(r(m_1-1)-2\lambda)s}ds\right],
	\end{align*} a straightforward calculation now yields that $\E_x\left(\left(U_{2t} - \E_x(U_{2t}|\F_{t})\right)^2\right)$ can be bounded from above by 
	\begin{equation}\label{eq:bcsum2}
	\frac{\|f\|_\infty^2}{h(x)} \left( \tilde{\E}_x(h(X_{2t}))e^{-(r(m_1-1)-\lambda)2t}+ (m_2-m_1)r \int_t^{2t} \tilde{\E}_x(h(X_s))e^{-(r(m_1-1)-\lambda)s}ds\right).
	\end{equation} Now, since $\tilde{\E}_x(h(X_s)) \leq \left\|\frac{h}{1+V}\right\|_\infty \tilde{\E}_x(1+V(X_s)) \leq \left\|\frac{h}{1+V}\right\|_\infty(1+V(x)+K)$ for any $s$ by (V2-V4), we see that 
	$$
	\E_x\left(\left(U_{2t} - \E_x(U_{2t}|\F_{t})\right)^2\right) \leq \frac{\|f\|_\infty^2}{h(x)} \cdot \left\|\frac{h}{1+V}\right\|_\infty(1+V(x)+K) \cdot \left(1+ \frac{(m_2-m_1)r}{r(m_1-1)-\lambda}\right) \cdot e^{-(r(m_1-1)-\lambda)t}
	$$ which is enough to imply \eqref{eq:bcsum}.
\end{proof}

Now, the convergence along lattice times will follow if we can show that, as $n$ tends to infinity, for any $\delta > 0$ one has
$$
\E_x(U_{2n\delta}|\F_{n\delta}) \overset{as}{\longrightarrow} \nu(h f) \cdot D^{(x)}_\infty
$$ or, equivalently, that 
$$
\E_x(U_{2n\delta}|\F_{n\delta}) - \nu(h f) \cdot D^{(x)}_{n\delta} \overset{as}{\longrightarrow} 0.
$$ By Lemma \ref{lema:mt1}, the former reduces to showing that
\begin{equation}\label{eq:convcs}
\Sigma^{(x)}_{n\delta} :=\frac{1}{h(x)}\sum_{u \in \overline{\xi}^{(x)}_{n\delta}} e^{-(r(m_1-1)-\lambda)n\delta}h(u_{n\delta})\left[ \tilde{\E}_{u_{n\delta}}(f(X_{n\delta})) - \nu(h f)\right] \overset{as}{\longrightarrow} 0.
\end{equation} Using Markov's inequality, we have by the Borel-Cantelli lemma that \eqref{eq:convcs} follows if we show that
\begin{equation} \label{eq:convcsl2}
\sum_{n \in \N} \E_x( \Sigma_{n\delta}^2) < +\infty.
\end{equation} But notice that, by proceeding as in the computation leading to \eqref{eq:dcomp}, we have 
$$
\E_x(\Sigma^2_{n\delta}) = \langle1\rangle_{n\delta} + \langle2\rangle_{n\delta}
$$ where 
$$
\langle 1\rangle_{n\delta}:= \E_x(M_{n\delta}^2 \cdot s_{h f}^2(X_{n\delta},n\delta) ) e^{-r(m_1-1)n\delta}
$$ and
$$
\langle 2 \rangle_{n\delta}:= (m_2-m_1) \int_0^{n\delta} \E_x\left( M_s^2 \cdot  \E^2_{X_s}( M_{n\delta-s} \cdot s_{h f}(X_{n\delta-s},n\delta))\right)re^{-r(m_1-1)s}ds
$$ where, for $y \in J$ and $s \geq 0$, we write
$$
s_{h f}(y,s):=\tilde{\E}_y(f(X_s))-\nu(h f).
$$ Now, since $\| \frac{f}{1+V} \|_\infty \leq \|f\|_\infty < +\infty$ by assumption on $f$, Proposition \ref{prop:Lyapunov} yields the existence of some constants $C,\alpha > 0$ such that for all $y \in J$ and $s \geq 0$
$$
|s_{h f}(y,s)| \leq C(1+V(y))e^{-\alpha s}\|f\|_\infty.
$$ On the other hand, since $f$ is bounded we always have that $\|s_{h f}\|_\infty \leq 2\|f\|_\infty$. Hence, we obtain
\begin{align*}
\langle 1 \rangle_{n \delta} &\leq \frac{4\|f\|^2_\infty}{h(x)} \tilde{\E}_x( h(X_{n\delta})) e^{-(r(m_1-1)-\lambda)n\delta} \\
& \leq \frac{4\|f\|^2_\infty}{h(x)} \left\|\frac{h}{1+V}\right\|_\infty \tilde{\E}_x\left(1+V(X_{n\delta})\right)e^{-(r(m_1-1)-\lambda)n\delta}\\
& \leq \frac{4\|f\|^2_\infty}{h(x)}\left\|\frac{h}{1+V}\right\|_\infty  (1+V(x)+K)  e^{-(r(m_1-1)-\lambda)n\delta} 
\end{align*} so that $\sum_{n \in \N} \langle 1 \rangle_{n \delta} < +\infty$ and, similarly, using (V2) we obtain 
\begin{align*}
\E_x\left( M_s^2 \cdot \E_{X_s}^2 ( M_{n\delta-s} \cdot s_{h f}(X_{n\delta -s},n\delta))\right) &  \leq \frac{2\|f\|_\infty}{h(x)} \tilde{\E}_x( h(X_s) \tilde{\E}_{X_s}(s_{h f}(X_{n\delta - s},n\delta)) e^{\lambda s}\\
& \leq \frac{2C\|f\|_\infty^2}{h(x)} \tilde{\E}_x\left( h(X_s) \tilde{\E}_{X_s}(1+V(X_{n\delta -s}))\right)e^{-\alpha n\delta}e^{\lambda s}\\
& \leq \frac{2C\|f\|_\infty^2}{h(x)} \tilde{\E}_x\left( h(X_s) (1+K+e^{-\gamma (n\delta - s)}V(X_s))\right)e^{-\alpha n\delta}e^{\lambda s}\\
& \leq \frac{2C(1+K)\|f\|_\infty^2}{h(x)} \tilde{\E}_x\left( h(X_s) (1+V(X_s))\right)e^{-\alpha n\delta}e^{\lambda s}, 
\end{align*} so that
$$
\langle 2 \rangle_{n \delta} \leq 2C(1+K)\|f\|_\infty^2 \overline{\Phi}_x e^{-\alpha n\delta}
$$ and thus $\sum_{n \in \N} \langle 2 \rangle_{n \delta} <+\infty$ since $\overline{\Phi}_x <+\infty$. Hence, we conclude that \eqref{eq:convcsl2} holds and therefore that \eqref{eq:convcs} as well, from which the convergence along lattice times now immediately follows.

To establish the full limit as $t \rightarrow +\infty$, suppose first that $f$ in \eqref{eq:defu} above is the indicator function of some open set $B \in \B_J$. It follows from \eqref{eq:defu} that for any $\delta > 0$ one has the bound
\begin{equation}\label{eq:compu}
U_t^{(x)} \geq \frac{e^{-(r(m_1-1)-\lambda)\delta}}{h(x)(1+\varepsilon)}\sum_{u \in \overline{\xi}^{(x)}_{n\delta}} e^{-(r(m_1-1)-\lambda)n\delta}h(u_{n\delta})\Xi^{(u)}_{B,\delta,\varepsilon}
\end{equation} for $t \in [n\delta,(n+1)\delta)$, where $\Xi^{(u)}_{B,\delta,\varepsilon}$ denotes the indicator function of the event $A^{(u),1}_{B,\delta,\varepsilon} \cap A^{(u),2}_{B,\delta,\varepsilon}$, where
$$
A^{(u),1}_{B,\delta,\varepsilon}:=\left\{ v_t \in B \text{ and }h(v_t) > \frac{1}{1+\varepsilon} h(u_{n\delta}) \text{ for all }v \in \overline{\xi}^{(u)}_t \text{ and }t \in [n\delta,(n+1)\delta)\right\}
$$ and 
$$
A^{(u),2}_{B,\delta,\varepsilon}:=\left\{ \text{The particle }u \in \overline{\xi}^{(x)}_{n\delta} \text{ does not branch in }[n\delta,(n+1)\delta)\right\}.
$$
Now, $\Xi^{(u)}_{B,\delta,\varepsilon}$ is not a function of $u_{n\delta}$ alone as it also depends on the evolution of $\xi^{(u)}$ on the time interval $[n\delta,(n+1)\delta)$, so that our previous analysis for lattice times does not directly apply. However, since $\Xi^{(u)}_{B,\delta,\varepsilon}$ is still bounded, one can adapt the previous analysis to our current setting to show that the expression
$$
\sum_{u \in \overline{\xi}^{(x)}_{n\delta}} e^{-(r(m_1-1)-\lambda)n\delta}h(u_{n\delta})\Xi^{(u)}_{B,\delta,\varepsilon} - \E_x\Bigg( \sum_{u \in \overline{\xi}^{(x)}_{n\delta}} e^{-(r(m_1-1)-\lambda)n\delta}h(u_{n\delta})\Xi^{(u)}_{B,\delta,\varepsilon} \Bigg| \F_{n\delta} \Bigg)
$$ tends to zero almost surely as $n \rightarrow +\infty$. Now, it is easy to see that 
$$
\E_x\Bigg( \sum_{u \in \overline{\xi}^{(x)}_{n\delta}} e^{-(r(m_1-1)-\lambda)n\delta}h(u_{n\delta})\Xi^{(u)}_{B,\delta,\varepsilon} \Bigg| \F_{n\delta} \Bigg) = \sum_{u \in \overline{\xi}^{(x)}_{n\delta}} e^{-(r(m_1-1)-\lambda)n\delta}h(u_{n\delta})\zeta_{B,\delta,\varepsilon}(u_{n\delta})
$$ where for $y \in J$ we define
$$
\zeta_{B,\delta,\varepsilon}(y):= e^{-r(m_1-1)\delta}P_y\left( X_t \in B \text{ and }h(X_t) > \frac{1}{1+\varepsilon}h(y) \text{ for all }\text{ and }t \in [0,\delta)\right).
$$ But by our previous analysis conducted for lattice times we know that 
$$
\frac{1}{h(x)}\sum_{u \in \overline{\xi}^{(x)}_{n\delta}} e^{-(r(m_1-1)-\lambda)n\delta}h(u_{n\delta})\zeta_{B,\delta,\varepsilon}(u_{n\delta}) \overset{as}{\longrightarrow} \nu(h \zeta_{B,\delta,\varepsilon}) \cdot D^{(x)}_\infty
$$ so that by \eqref{eq:compu} we conclude that $P$-almost surely
\begin{equation}\label{eq:compu2}
\liminf_{t \rightarrow +\infty} U^{(x)}_t \geq \frac{e^{-(r(m_1-1)-\lambda)\delta}}{1+\varepsilon} \nu(h\zeta_{B,\delta,\varepsilon}) \cdot D^{(x)}_\infty.
\end{equation} Since $B$ is an open set, $h$ is continuous and $X$ has cadlag trajectories, we have that for any $y \in B$ 
$$
\lim_{\delta \rightarrow 0} P_y\left( X_t \in B \text{ and }h(X_t) > \frac{1}{1+\varepsilon}h(y) \text{ for all }t \in [0,\delta)\right) = 1
$$ so that, by first letting $\delta \rightarrow 0$ and afterwards taking $\varepsilon \rightarrow 0$ in \eqref{eq:compu2}, we have that $P$-almost surely
\begin{equation}\label{eq:compu3}
\liminf_{t \rightarrow +\infty} U^{(x)}_t \geq \nu(h \mathbbm{1}_B) \cdot D^{(x)}_\infty.
\end{equation} 
Now, since $J$ is locally compact and separable, there exists a countable basis $\mathcal{G}:=(G_k)_{k \in \N}$ for its topology which is closed under finite unions. In particular, there exists a full \mbox{$P$-measure set $\tilde{\Omega}^{(x)}$} such that for any $\omega \in \tilde{\Omega}^{(x)}$ the inequality in \eqref{eq:compu3} holds simultaneously for every $G_{k} \in \mathcal{G}$. Hence, since any open set $G \subseteq J$ can be written as the union of an increasing sequence of open sets $(G_{n_k})_{k \in \N}\subseteq \mathcal{G}$, we conclude that for any $\omega \in \tilde{\Omega}^{(x)}$ and $k \in \N$
$$
\liminf_{t \rightarrow +\infty} U^{(x),\mathbbm{1}_G}_t (\omega) \geq \liminf_{t \rightarrow +\infty} U^{(x),\mathbbm{1}_{G_{n_k}}}_t (\omega) \geq \nu(h \mathbbm{1}_{G_{n_k}}) \cdot D^{(x)}_\infty (\omega).
$$ By taking $k \rightarrow +\infty$, we obtain that for any $\omega \in \tilde{\Omega}^{(x)}$
\begin{equation}\label{eq:compu5}
\liminf_{t \rightarrow +\infty} U^{(x),\mathbbm{1}_G}_t (\omega) \geq \nu(h \mathbbm{1}_{G}) \cdot D^{(x)}_\infty (\omega)
\end{equation} holds simultaneously for all open sets $G \subseteq J$. Hence, if for $\omega \in \tilde{\Omega}^{(x)} \cap \{ D^{(x)}_\infty > 0\} \cap \{ D_t^{(x)} \rightarrow D^{(x)}_\infty\}$ and each $t \geq 0$ we define the probability measures $\mu^{(x)}_t(\omega)$ and $\mu$ on $(J,\B_J)$ by the formulas 
$$
\mu_t^{(x)}(\omega)(B):= \frac{U^{(x),\mathbbm{1}_B}_t(\omega)}{D_t^{(x)}(\omega)}\hspace{1cm}\text{ and }\hspace{1cm}\mu(B):=\nu(h \mathbbm{1}_B)
$$ for each $B \in \mathcal{B}_J$ (notice that $\mu_t^{(x)}$ is well-defined because $D_t^{(x)}>0$ on the event $\{D^{(x)}_\infty > 0\}$) then, by Portmanteau's theorem \cite[Theorem 4.25]{kallenberg2002}, \eqref{eq:compu5} tells us that $\mu_t^{(x)}(\omega)$ converges weakly to $\mu$. In particular, since $h$ is strictly positive and continuous on $J$, it follows that if $B \in \mathcal{C}_X$ is such that $\nu(\partial B)=0$ then $\frac{\mathbbm{1}_B}{h}:J \rightarrow \R$ is a bounded and $\B_J$-measurable function whose set of discontinuities is precisely $\partial B$ has $\mu$-null measure, so that by the weak convergence we obtain
$$
\lim_{t \rightarrow +\infty}\mu_t^{(x)}(\omega)\left(\frac{\mathbbm{1}_B}{h}\right) = \mu\left( \frac{\mathbbm{1}_B}{h}\right)=\nu(B).
$$ Since $D^{(x)}_t(\omega) \rightarrow D^{(x)}_\infty(\omega)$ for $\omega \in \tilde{\Omega}^{(x)} \cap \{ D^{(x)}_\infty > 0\} \cap \{ D_t^{(x)} \rightarrow D^{(x)}_\infty\}$, the former implies that
$$
\lim_{t \rightarrow +\infty} U^{(x),\frac{\mathbbm{1}_B}{h}}(\omega) = \nu(B)\cdot D^{(x)}_\infty(\omega).
$$ On the other hand, if $\omega \in \tilde{\Omega}^{(x)} \cap \{ D^{(x)}_\infty = 0\} \cap \{ D_t^{(x)} \rightarrow D^{(x)}_\infty\}$ then clearly as $t \rightarrow +\infty$  we have
$$
\left|U^{(x),\frac{\mathbbm{1}_B}{h}}_t(\omega)\right| \leq \left\|\frac{\mathbbm{1}_B}{h}\right\|_\infty D^{(x)}_t (\omega) \longrightarrow 0 = \nu(B) \cdot D^{(x)}_\infty (\omega).
$$ Thus, since $\Omega^{(x)}:=\tilde{\Omega}^{(x)} \cap \{D^{(x)}_t \rightarrow D^{(x)}_\infty\}$ is a full $P$-measure set by Theorem \ref{theo:main2}, we obtain \eqref{eq:convas}. Finally, \eqref{eq:convas2} follows from \eqref{eq:convas} as in the proof of \eqref{eq:ks2}. This concludes the proof of Theorem \ref{theo:main5}.

\section{Examples and applications : the proofs}\label{sec:examples2}
 
We now discuss how to verify our assumptions for the different examples listed in Section \ref{sec:examples}. Many of the arguments given here can be applied in general also to other systems.

\subsection{Verifying Assumptions \ref{assumpG}}\label{sec:va}

It follows from Proposition \ref{prop:lyapunov1} that perhaps the easiest way to check Assumptions \ref{assumpG} (also (B1)-(B2)) in the $\lambda$-positive case is to find a Lyapunov functional for $X$ which satisfies (V3)-(V4). The next result will give us a simple criterion to check whether a given function $V$ is a Lyapunov functional for $X$.

\begin{proposition}\label{prop:checkv} Let $V:J \rightarrow [0,+\infty)$ be a $\B_J$-measurable function.
		\begin{enumerate}
		\item [$\bullet$] If there exists $t > 0$ such that the transition probabilities of $X_t$ under the measure $\tilde{P}$ have a density $\tilde{f}_t: J \times J \rightarrow \R_{\geq 0}$ with respect to some reference measure $\theta$, i.e.
		$$
		\tilde{\E}_x(g(X_t))=\int_J g(y)\tilde{f}_t(x,y) d\theta(y)
		$$ for all bounded and $\B_J$-measurable $g: J \rightarrow \R$, and this $\tilde{f}_t$ is such that for every $R > 0$ there exists $\beta_R > 0$ and $B_R \in \B_J$ with $\theta(B_R) > 0$ verifying that
		\begin{equation}
		\label{eq:implyv1}
		\inf_{y \in B_R} \tilde{f}_t(x,y) \geq \beta_R
		\end{equation} for all $x \in J$ with $V(x) \leq R$, then assumption (V1) holds.
		
		\item [$\bullet$] If $\tilde{\mathcal{L}}$ denotes the generator of $X$ under the measure $\tilde{P}$, then (V2) is satisfied whenever $V$ verifies the following conditions:
		
		\begin{itemize}
			\item [i.] $V$ is in the domain of $\tilde{\mathcal{L}}$ and for any $x \in J$ and $t > 0$ satisfies Dynkin's formula
			\begin{equation}\label{eq:dfor}
			\tilde{\E}_x(V(X_t)) = V(x) + \int_0^t \tilde{\E}_x(\tilde{\mathcal{L}}(V)(X_s))ds,
			\end{equation} as well as the integrability condition 
			\begin{equation}\label{eq:dfor2}
			\int_0^t \tilde{\E}_x(|\tilde{\mathcal{L}}(V)(X_s)|)ds < +\infty,
			\end{equation}
			\item [ii.] There exist constants $\kappa_1,\kappa_2 > 0$ such that for all $x \in J$
			\begin{equation}\label{eq:v3}
			\tilde{\mathcal{L}}(V)(x) \leq \kappa_1 - \kappa_2 V(x).
			\end{equation}
		\end{itemize} This explains the choice of terminology Lyapunov functional when referring to $V$.
	\end{enumerate}
\end{proposition}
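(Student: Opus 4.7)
The proof splits into two independent parts, one for each bullet.

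For the first bullet, the plan is to establish (V1) via a standard minorization argument for the transition kernel. The hypothesis directly yields that for any $x$ with $V(x)\leq R$ and any $A\in\B_J$,
$$
\tilde{P}_x(X_t\in A)\;\geq\;\int_{A\cap B_R}\tilde{f}_t(x,y)d\theta(y)\;\geq\;\beta_R\,\theta(A\cap B_R)\;=\;\alpha_R\,m_R(A),
$$
where $\alpha_R:=\beta_R\theta(B_R)$ and $m_R(\cdot):=\theta(\cdot\cap B_R)/\theta(B_R)$ is a probability measure on $(J,\B_J)$. This lets one decompose $\tilde{P}_x(X_t\in\cdot)=\alpha_R m_R+(1-\alpha_R)\tilde{Q}_x$, where $\tilde{Q}_x:=(\tilde{P}_x(X_t\in\cdot)-\alpha_R m_R)/(1-\alpha_R)$ is a probability measure (nonnegative by the minorization, total mass one by construction). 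Since $V(x)+V(y)\leq R$ implies both $V(x)\leq R$ and $V(y)\leq R$, the same decomposition applies to both initial points with the same constant $\alpha_R$, and hence for any bounded $\B_J$-measurable $f$,
$$
\tilde{\E}_x(f(X_t))-\tilde{\E}_y(f(X_t))=(1-\alpha_R)\bigl(\tilde{Q}_x(f)-\tilde{Q}_y(f)\bigr).
$$
The desired inequality (V1) then follows at once from the trivial bound $|\tilde{Q}_x(f)-\tilde{Q}_y(f)|\leq 2\|f\|_\infty$, valid since both $\tilde{Q}_x$ and $\tilde{Q}_y$ are probability measures.

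For the second bullet, the plan is to combine Dynkin's formula with a Gr\"onwall-type argument applied to $\phi(t):=\tilde{\E}_x(V(X_t))$. The integrability condition \eqref{eq:dfor2} together with \eqref{eq:dfor} implies that $\phi$ is absolutely continuous on $[0,+\infty)$, with a.e.\ derivative $\phi'(t)=\tilde{\E}_x(\tilde{\mathcal{L}}(V)(X_t))$. Taking $\tilde{\E}_x$ in the drift condition \eqref{eq:v3} yields
$$
\phi'(t)\leq\kappa_1-\kappa_2\phi(t)\qquad\text{for a.e. }t\geq 0.
$$
Multiplying by the integrating factor $e^{\kappa_2 t}$ converts this into $(e^{\kappa_2 t}\phi(t))'\leq\kappa_1 e^{\kappa_2 t}$, and integrating from $0$ to $t$ gives
$$
\phi(t)\leq V(x)e^{-\kappa_2 t}+\tfrac{\kappa_1}{\kappa_2}(1-e^{-\kappa_2 t})\leq V(x)e^{-\kappa_2 t}+\tfrac{\kappa_1}{\kappa_2},
$$
which is exactly (V2) with $\gamma:=\kappa_2$ and $K:=\kappa_1/\kappa_2$.

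Both arguments are essentially classical (the minorization/coupling argument for total variation, and the Foster--Lyapunov computation for the drift), so neither should present a serious obstacle. The only subtleties are (a) verifying that the ``remainder'' density $\tilde{f}_t(x,\cdot)-\beta_R\mathbbm{1}_{B_R}(\cdot)$ is nonnegative, which is precisely what \eqref{eq:implyv1} guarantees, and (b) justifying the absolute continuity of $\phi$ needed to pass from Dynkin's formula to the differential inequality, which is exactly the purpose of the integrability assumption \eqref{eq:dfor2}. Apart from these verifications, the proof proceeds by the routine manipulations indicated above.
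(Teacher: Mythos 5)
Your proof is correct and proceeds by the same underlying ideas as the paper. For the first bullet, your Doeblin minorization decomposition $\tilde{P}_x(X_t\in\cdot)=\alpha_R m_R+(1-\alpha_R)\tilde{Q}_x$ with $\alpha_R=\beta_R\theta(B_R)$ is simply a repackaging of the paper's direct estimate, which splits $\int_J|\tilde f_t(x,y)-\tilde f_t(x',y)|\,d\theta(y)$ over $B_R$ and $J\setminus B_R$ and subtracts the common lower bound $\beta_R$ on $B_R$; both routes give exactly the constant $1-\beta_R\theta(B_R)$. For the second bullet, the paper gives no argument at all (it cites Theorems 4.3 and 6.1 of Meyn--Tweedie and omits the details), so your self-contained Gr\"onwall computation is a welcome filling-in: the integrability condition \eqref{eq:dfor2} is used precisely as you say, to guarantee that $\phi(t)=\tilde{\E}_x(V(X_t))$ is absolutely continuous with $\phi'(t)=\tilde{\E}_x(\tilde{\mathcal L}(V)(X_t))$ a.e., after which taking expectations in \eqref{eq:v3} and applying the integrating factor $e^{\kappa_2 t}$ yields (V2) with $\gamma=\kappa_2$ and $K=\kappa_1/\kappa_2$.
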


\begin{proof} To obtain the first statement, for any bounded $\B_J$-measurable $f:J \rightarrow \R$ and $x,x' \in J$ such that $\max\{V(x),V(x')\}\leq R$ we may compute
	\begin{align*}
	|\tilde{\E}_x(f(X_t)) -\tilde{\E}_{x'}(f(X_t))| &\leq \|f\|_\infty \int_J |\tilde{f}_t(x,y)-\tilde{f}_t(x',y)|d\theta(y)\\
	& \leq \|f\|_\infty\left( \int_{J-B_R} |\tilde{f}_t(x,y)-\tilde{f}_t(x',y)|d\theta(y) + \int_{B_R}|\tilde{f}_t(x,y)-\tilde{f}_t(x',y)|d\theta(y)\right).
	\end{align*} Now, observe that
	$$
	\int_{J-B_R} |\tilde{f}_t(x,y)-\tilde{f}_t(x',y)|d\theta(y) \leq \int_{J-B_R} \tilde{f}_t(x,y)d\theta(y) + \int_{J-B_R}\tilde{f}_t(x',y)d\theta(y)
	$$ while 
	$$
	\int_{B_R} |\tilde{f}_t(x,y)-\tilde{f}_t(x',y)|d\theta(y) \leq \int_{B_R} (\tilde{f}_t(x,y)-\beta_R)d\theta(y) + \int_{B_R}(\tilde{f}_t(x',y)-\beta_R)d\theta(y),
	$$ since $\inf_{y \in B_R} \tilde{f}_t(z,y) \geq \beta_R$ for $z=x,x'$. From this we conclude that
	$$
	|\tilde{\E}_x(f(X_t))-\tilde{\E}_{x'}(f(X_t))| \leq 2(1-\beta_R\theta(B_R))\|f\|_\infty
	$$ and so (V1) follows. As for the second statement, it follows from \cite[Theorem 4.3, Theorem 6.1]{meyn1993}. We omit the details.
\end{proof}

With this result at hand, we can now proceed on to verify Assumptions \ref{assumpG} for the examples of Section \ref{sec:examples}. In the following, we will often use the fact that for any $f:J \rightarrow \R$ in the domain of $\tilde{\mathcal{L}}$ one has the formula
$$
\tilde{\mathcal{L}}(f)= \frac{\mathcal{L}(h f)}{h} - \lambda f,
$$ which can be deduced at once from the definition of $\tilde{P}$. 

\subsubsection{Subcritical Galton-Watson process} \label{sec:gw2}

By Proposition \ref{prop:lyapunov1}, it will suffice to a find a \mbox{function $V$} satisfying (V1-2-3-4). As it turns out, if $\sigma_\rho^2 <+\infty$ then the eigenfunction $h$  itself will work. Indeed, notice that $V:=h$ trivially satisfies (V3) and (V4), so that it only remains to check (V1)-(V2) with the help of Proposition \ref{prop:checkv}. 

Recall that $h(x)=x$ here and that we may choose the normalization of $\nu$ so that $\nu(h)=1$. Now, to check (V1) we notice that the transition probabilities of $X_t$ under $\tilde{P}$ have a density with respect to the counting measure which is given by 
$$
\tilde{f}_t(x,x'):=\tilde{P}_{x}(X_t = x') = \frac{h(x')}{h(x)}e^{\lambda t}P_x(X_t=x') = \frac{x'}{x}e^{\lambda t}P_x(X_t = x')
$$ for any $x,x' \in J$. Now, observe that for any $x \in J$ one has the lower bound
$$
P_x(X_1 = 1 ) \geq \frac{e^{-1}}{(x-1)!}[\rho(-1)]^{x-1}
$$ which corresponds to the probability that there were exactly $x-1$ deaths and no births during the time interval $[0,1]$. In particular, this yields that
$$
\tilde{f}_1(x,1) \geq \frac{1}{R!}e^{\lambda-1} [\rho(-1)]^{R-1} > 0
$$ for all $x \in \{1,\dots,R\}$, which implies \eqref{eq:implyv1} and therefore (V1) as well.

On the other hand, it is easy to see that if $\sigma^2_\rho < +\infty$ then $h$ is in the domain of $\tilde{\mathcal{L}}$ and satisfies
$$
\tilde{\mathcal{L}}(h)(x)= \frac{\L(h^2)(x)}{h(x)} + \lambda h(x)= \sum_{y} \rho(y)((x+y)^2-x^2) - x \sum_y y\rho(y)= \sigma_\rho - \lambda h(x)
$$ for all $x \in \N$, from where (V2) immediately follows by Proposition \ref{prop:checkv}. 

This shows that Assumptions \ref{assumpG} hold whenever $\sigma^2_\rho <+\infty$. Moreover, since $\inf_{x \in J}h(x) > 0$, one has that in fact $\mathcal{C}_X=\B_J$. On the other hand, it is not difficult to check that $\E_x(M_t^2)=+\infty$ for every $t > 0$ if $\sigma_\rho = +\infty$. Thus, it follows that in this case the sequences $D^{(x)}$ and $W^{(x)}(B,B')$ are not bounded in $L^2$ for any $x \in \N$. 

\subsubsection{Subcritical contact process on $\Z^d$} As for the previous model, it will suffice to show that $h$ is a Lyapunov functional for $X$. In this case $h$ is not explicitly known, although it is shown in \cite[Lemma 2.9]{sturm2014} that $h(\zeta)$ is equivalent to the number of infected sites in $\zeta$, i.e. there exist two constants $c_1, c_2 >0$ such that
\begin{equation}\label{ineq:CP}
c_1 |\zeta| \leq h(\zeta) \leq c_2 |\zeta|,
\end{equation} for any $\zeta \in J$, where $|\zeta|$ stands for the cardinality of any $\sigma \in \mathcal{P}_f(\Z^d)$ such that $\langle \sigma \rangle = \zeta$. 

The proof that (V1) holds here is analogous to that of Section \ref{sec:gw2}, so we omit the details and move on to show (V2). Note that if $\zeta=\langle \sigma \rangle$ for some nonempty $\sigma \in \mathcal{P}_f(\Z^d)$ then
$$
\mathcal{L}(h^2)(\zeta) = \sum_{x \in \sigma} h^2(\langle \sigma - \{x\}\rangle) - h^2(\zeta) 
+ \sum_{y \sim \sigma} \gamma |\{x \in \sigma : |x-y|_1 = 1\}|(h^2(\langle \sigma \cup \{y\}\rangle)-h^2(\zeta))
$$ 
where $y \sim \sigma$ if $|y-x|_1=1$ for some $x \in \sigma$. Using that $a^2-b^2=(a-b)(a+b)$ for any $a,b \in \R$ and recalling that $h$ is the eigenvector of $\mathcal{L}$ associated to $-\lambda$, a straightforward computation yields 
\begin{align*}
\tilde{\mathcal{L}}(h)(\zeta)&= \sum_{x \in \sigma} \left(h(\langle \sigma - \{x\}\rangle) - h(\zeta)\right)\left(\frac{h(\langle \sigma-\{x\}\rangle)}{h(\zeta)}\right)\\
& + \sum_{y \sim \sigma} \gamma |\{x \in \sigma : |x-y|_1 = 1\}|\left(h(\langle\sigma \cup \{y\}\rangle)-h(\zeta)\right) \left( \frac{h(\langle \sigma \cup \{y\}\rangle)}{h(\zeta)}\right).
\end{align*} Now, observe that for any $\zeta=\langle \sigma \rangle \in J$ and $x \in \Z^d$ we have that
\begin{equation}\label{eq:concomp}
h(\zeta) \leq h(\langle \sigma \cup \{x\} \rangle) \leq h( \zeta) + h( \langle \{0\} \rangle).
\end{equation} Indeed, we can couple $Y^{(\sigma\cup\{x\})}$ with the pair $(Y^{(\sigma)},Y^{(\{x\})})$ so that $
Y^{(\sigma\cup\{x\})}_t = Y^{(\sigma)}_t \cup Y^{(x)}_t
$ holds for all $t \geq 0$, which gives the inequalities 
$$
P_\zeta( X_t \neq \emptyset ) \leq P_{\langle \sigma \cup \{x\}\rangle}( X_t \neq \emptyset) \leq P_\zeta( X_t \neq \emptyset) + P_{\langle \{0\} \rangle}( X_t \neq \emptyset)
$$ from where \eqref{eq:concomp} immediately follows since 
$$
h(\zeta)=\lim_{t \rightarrow +\infty} e^{\lambda t} P_\zeta(X_t \neq \emptyset )
$$ for any $\zeta \in J$ by the $\lambda$-positivity of $X$, provided that we normalize $\nu$ to be a probability measure and choose the multiple of $h$ so that $\nu(h)=1$. Hence, we conclude that
$$
\tilde{\mathcal{L}}(h)(\zeta) \leq -\lambda h(\zeta)+\frac{h(\langle \{0\}\rangle)}{h(\zeta)} (\mathcal{L}^-(h)(\zeta)+\mathcal{L}^+(h)(\zeta)),
$$ where
$$
\mathcal{L}^-(h)(\zeta):= \sum_{x \in \tilde{\sigma}} \left(h(\zeta) - h(\langle \tilde{\sigma} - \{x\}\rangle)\right) \leq h(\langle \{0\}\rangle)|\zeta|
$$ and
$$
\mathcal{L}^+(h)(\zeta):=\sum_{y \sim \tilde{\sigma}} \gamma |\{x \in \tilde{\sigma} : |x-y|_1 = 1\}|\left(h(\langle\tilde{\sigma} + \{y\}\rangle)-h(\zeta)\right) \leq \gamma(2d)^2 h(\langle \{0\}\rangle) |\zeta|.
$$ Therefore, by combining all the previous estimates with \eqref{ineq:CP}, we conclude that 
$$
\tilde{\mathcal{L}}(h)( \zeta ) \leq -\lambda h(\zeta) +\frac{c_2^2}{c_1}(1+\gamma(2d)^2)
$$ from which (V2) now follows. In particular, Assumptions \ref{assumpG} are satisfied with $\mathcal{C}_X=\B_J$ as before. 

\subsubsection{Recurrent Ornstein-Uhlenbeck process}\label{sec:rou2} In this case,showing that $h$ is a Lyapunov functional is more involved. Indeed, a direct computation shows that 
$$
\tilde{\mathcal{L}}(h)(x)= \sqrt{\frac{4\lambda}{\pi}} \cdot \frac{1}{x}-\lambda h(x)
$$ so that, in particular, \eqref{eq:v3} does not hold and therefore we cannot use Proposition \ref{prop:checkv} to show that (V2) holds for $V:=h$. This is true, however, but it requires a more refined argument. 
Indeed, first we observe that by \cite[Remark 2.3]{alili2005representations} we have  
$$
P_x( X_t > 0) = \int_{\tau(t)}^{+\infty} \frac{x}{\sqrt{2\pi s^3}} e^{-\frac{x^2}{2s}}ds
$$
for all $x \in J$ and $t \geq 0$, where 
$$
\tau(t):= \frac{e^{2\lambda t}-1}{2\lambda}.
$$ Using the bound $e^{-\frac{x^2}{2\tau(t)}} \leq e^{-\frac{x^2}{2s}} \leq  1$ for all $s \in [\tau(t),+\infty)$, we obtain
$$
\frac{x}{\sqrt{2\pi}}\cdot e^{-\frac{x^2}{2\tau(t)}} \cdot \frac{2}{\sqrt{\tau(t)}} \leq P_x(X_t >0) \leq \frac{x}{\sqrt{2\pi}}\cdot \frac{2}{\sqrt{\tau(t)}},
$$ from which a straightforward calculation yields that
\begin{equation}\label{eq:roup}
P_x(X_t >0) = h(x)e^{-\lambda t}(1+s_J(x,t))
\end{equation} where $s_J$ satisfies the bounds
\begin{equation}\label{eq:roup2}
\frac{e^{-\frac{x^2}{2\tau(t)}}}{\sqrt{1-e^{-2\lambda t}}}-1 \leq s_J (x,t) \leq \frac{1}{\sqrt{1-e^{-2\lambda t}}}-1.
\end{equation} Now, by the product rule (which can be easily justified here), from \eqref{eq:roup} and \eqref{eq:roup2} we obtain 
$$
\frac{d \tilde{\E}_x(e^{\lambda t}h(X_t))}{dt} = e^{\lambda t}(\tilde{\E}_x(\tilde{\mathcal{L}}(h)(X_t)) + \lambda \tilde{\E}_x(h(X_t))) = \frac{4\lambda e^{\lambda t}}{\pi} \cdot \tilde{\E}_x\left( \frac{\mathbbm{1}_{\{X_t > 0\}}}{h(X_t)}\right) \leq \frac{4\lambda}{\pi} \frac{e^{\lambda t}}{\sqrt{1-e^{-2\lambda t}}},
$$ so that 
$$
e^{\lambda t}\tilde{\E}_x(h(X_t)) = h(x) + \int_0^t \frac{d \tilde{\E}_x(e^{\lambda t}h(X_t))}{dt}\bigg|_{t=s}ds \leq h(x) + \frac{4\lambda}{\pi} \int_0^t \frac{e^{\lambda s}}{\sqrt{1-e^{-2\lambda s}}}ds.
$$ A straightforward calculation then shows that there exists a constant $c_\lambda > 0$ such that for \mbox{any $t \geq 0$}
$$
\int_0^t \frac{e^{\lambda s}}{\sqrt{1-e^{-2\lambda s}}}ds \leq c_\lambda e^{\lambda t},
$$ so that
$$
\tilde{\E}_x(h(X_t)) \leq e^{-\lambda t}h(x) + \frac{4\lambda}{\pi}c_\lambda
$$ and hence (V2) is satisfied. On the other hand, it follows from \cite{lladser2000} that on the event $\{X_t^{(x)} > 0\}$ the random variable $X_t^{(x)}$ has the following density with respect to Lebesgue measure:
\begin{equation}\label{eq:densityrou}
f_t(x,y):=P_x(X_t \in dy , T > t) = \sqrt{\frac{2}{\pi e^{-2\lambda t}\tau(t)}}\exp\left(- \frac{x^2}{2\tau(t)}- \frac{y^2}{2e^{-2\lambda t}\tau(t)}\right)\sinh \left(\frac{xy}{e^{-\lambda t}\tau(t)}\right).
\end{equation} In particular, the distribution of $X_t$ under the change of measure $\tilde{P}$ has density $\tilde{f}_t$ given by 
\begin{equation}\label{eq:eeff}
\tilde{f}_t(x,y):= \tilde{P}_x(X_t \in dy)= \frac{h(y)}{h(x)}e^{\lambda t}f_t(x,y).
\end{equation} From here it is easy to check that for any $R > 0$ there exist  $\beta_R > 0$ and some small set $B_R \in \B_J$ such that
$$
\inf_{y \in B_R} \tilde{f}_t(x,y) \geq \beta_R
$$ for all $x \in J$ with $V(x) \leq R$, from which (V1) follows and thus $h$ is a Lyapunov functional for $X$. 
By Proposition \ref{prop:lyapunov1} we obtain that Assumptions \ref{assumpG} hold for $\mathcal{C}_X$ given by 
$$
\mathcal{C}_X:=\left\{ B \in \B_J : \left\|\frac{\mathbbm{1}_B}{h}\right\|_\infty < +\infty\right\} = \left\{ B \in \B_J : \inf B > 0\right\}.
$$ However, it is possible to show that (A2) holds in fact for \textit{every} $B \in \B_J$. Indeed, \eqref{eq:roup}-\eqref{eq:roup2} together imply that (A2) holds for $B=J$. To obtain the same for any $B \in \B_J$, by the $\lambda$-positivity we will only need to check that (A2-iii) and (A2-iv) hold for any such $B$. But notice that for every $n \in \N$ one has the bound
$$
|s_B(x,t)| \leq |s_J(x,t)|+|s_{J_n}(x,t)|+|s_{B\cap J_n}(x,t)| + 2|\nu(J) - \nu(J_n)|.
$$ from which (A2-iii-iv) immediately follow, since all sets on the right-hand side satisfy (A2). 

\subsubsection{Transient Ornstein-Uhlenbeck process}\label{sec:tou2} Notice that here $h$ cannot be a Lyapunov functional due to the fact that it is bounded, so that (V1) fails to hold. Instead, we shall show that $V(x):=x^2$ is indeed a Lyapunov functional for $X$. Since $V$ also satisfies (V3)-(V4), from Proposition \ref{prop:lyapunov1} we will get that Assumptions \ref{assumpG} hold.  

Now, it is straightforward to check that the generator $\tilde{\mathcal{L}}$ of $X$ under the measure $\tilde{P}$ is given by 
$$
\tilde{\mathcal{L}}(f)(x):= \frac{1}{2}\sigma^2f''(x) - \lambda x f'(x)
$$ for any $f \in C^2(\R)$ and $x \in \R$. In particular, under the measure $\tilde{P}$ the random variable $X^{(x)}_t$ has a density $\tilde{f}_t$ with respect to Lebesgue measure given by 
\begin{equation}\label{eq:dentou}
\tilde{f}_t(x,y):= \frac{1}{\sqrt{2\pi\beta^{-1}(1-e^{-2\beta t})}}\exp\left\{- \frac{(y-xe^{-\beta t})^2}{\beta^{-1}(1-e^{-2\beta t})}\right\},
\end{equation} with $\beta:=\frac{\lambda}{\sigma^2}$, from where \eqref{eq:implyv1} follows and, hence, also (V1). On the other hand, it is easy to see that 
$$
\tilde{\mathcal{L}}(V)=\sigma^2 -2\lambda V
$$ so that (V2) is also satisfied, and thus $V$ is indeed a Lyapunov functional for $X$. 

\subsubsection{Brownian motion with drift killed at the origin}\label{sec:bbm2} In \cite{polak2012} it is shown that, if we take $\lambda:=\frac{c^2}{2}$ and choose the normalizations 
$$
h(x):= \frac{1}{\sqrt{2\pi \lambda^2}}x e^{cx} \hspace{1cm}\text{ and }\hspace{1cm}\frac{d\nu}{dx}(x):=2\lambda x e^{-cx}\mathbbm{1}_{(0,+\infty)}(x)
$$ (notice that, in particular, $\nu$ is a probability measure with this normalization), then $X$ satisfies (A2) for all $B \in \B_{(0,+\infty)}$, with $p$ given by $p(t):=t^{-\frac{3}{2}}$. Moreover, it is shown that for any $x,t > 0$ we have 
\begin{equation}\label{eq:densitybm}
P_x ( X_t \in dy , X_t > 0) = \frac{e^{cx}}{\sqrt{2\pi t}} e^{-\lambda t} e^{-cy} \left[ \exp{\left( - \frac{(x-y)^2}{2t }\right)}-\exp{\left( - \frac{(x+y)^2}{2t}\right)}\right]\mathbbm{1}_{(0,+\infty)}(y),
\end{equation} so that we may compute
$$
\E_x(h^2(X_t))= \frac{e^{cx}}{\sqrt{2\pi t}} e^{-\lambda t} \int_0^y h^2(y)e^{-cy} \left[ \exp{\left( - \frac{(x-y)^2}{2t }\right)}-\exp{\left( - \frac{(x+y)^2}{2t}\right)}\right]dy.
$$
A straightforward (but tedious) calculation shows that 
\begin{equation} \label{eq:bbmphi}
\E_x(h^2(X_t))=\frac{1}{2\pi\lambda}e^{cx}\left( F(x)-F(-x)\right),
\end{equation}
where 
$$
F(x):=\sqrt{2\pi}e^{cx}[(ct+x)^2+t] - \frac{e^{-\frac{x^2}{2t}-\lambda t}}{c\sqrt{t}+\frac{x}{\sqrt{t}}}\left([(ct+x)^2+t](1+o_x(1))+(ct+x)^2\right)
$$ and $o_x(1) \rightarrow 0$ as $t \rightarrow +\infty$ uniformly over compact sets of $x \in \R_+$. In particular, we get (A1), since it is straightforward to check that $-\lambda$ is an eigenvalue of $\mathcal{L}$ with associated eigenfunction $h$. On the other hand, it follows from \eqref{eq:bbmphi} that for $x > 0$ we have
\begin{equation} \label{eq:bbmphi2}
\lim_{t \rightarrow +\infty} \frac{\E_x(h^2(X_t))}{t^2}=\frac{2}{\sqrt{2\pi}}(e^{2cx}-1).
\end{equation} In particular, we conclude that 
$$
\Phi_x < +\infty \Longleftrightarrow r(m_1-1)> 2\lambda,
$$ so that $L^2$-convergence does not hold for this model in the entire supercritical region $r(m_1-1)> \lambda$.

\subsection{Verifying Assumptions \ref{assumpG2}}

By Proposition \ref{prop:lyapunov1} and the analysis of Section \ref{sec:va}, it follows that all $\lambda$-positive processes of Section \ref{sec:examples} satisfy (B1) and (B2), so that it only remains to check these assumptions for the Brownian motion with drift killed at the origin. However, it follows from Section \ref{sec:bbm2} that (B1) holds if $r(m_1-1) >2\lambda$ and (B2) always holds since $\mathcal{C}_X=\B_J$. 

 As for assumption (B3), we noted earlier that, in the case of a countable state space $J$, (B3) is weaker than standard irreducibility. Indeed, given $x,x' \in J$ and $B \in \B_J$, by the Markov property one has that
$$
P_x(X_{n+1} \in B) \geq P_x( X_n = x')P_{x'}(X_1 \in B)
$$ and thus, since $P_x(X_n = x') > 0$ by irreducibility, (B3) immediately follows. In particular, (B3) is satisfied for the models of Sections \ref{sec:gw} and \ref{sec:cp} since they are both (standard) irreducible.  

For the remaining examples having an uncountable state space, we can use the following result. 

\begin{proposition}\label{prop:b2} Assumption (B3) holds if $X$ satisfies the following condition: 
	\begin{enumerate}
		\item [(D).] There exists some reference measure $\theta$ on $(J,\B_J)$ such that for any $x \in J$ and every $t > 0$ the conditional distribution $P_{x|t}$ of $X^{(x)}_t$  on the event $\{ X_t \in J \}$ is absolutely continuous with respect to $\theta$ and has a density $f_{x|t}$ which is strictly positive on $J$.
	\end{enumerate}
\end{proposition}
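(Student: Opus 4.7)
The plan is to verify (B3) directly by taking $n=1$ for every triple $(x,x',B)$; the point is that condition (D) provides a uniform mechanism for transporting positivity from one point of $J$ to any other in one Markov step.

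The first step is to extract from the hypothesis $P_{x'}(X_1 \in B) > 0$ the information that $\theta(B) > 0$. Since $B \in \B_J$ and hence $B \subseteq J$, condition (D) yields
$$
0 < P_{x'}(X_1 \in B) = P_{x'}(X_1 \in J)\int_B f_{x'|1}(y)\, d\theta(y),
$$
and the strict positivity of $f_{x'|1}$ on $J$ forces $\theta(B) > 0$.

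Next I would expand $P_x(X_2 \in B)$ via the Markov property, noting that the contribution from $\{X_1 \in \partial_* \overline{J}\}$ vanishes because absorbing states never leave $\partial_* \overline{J}$ while $B \subseteq J$. This gives
$$
P_x(X_2 \in B) = \int_J P_y(X_1 \in B)\, P_x(X_1 \in dy) = P_x(X_1 \in J)\int_J P_y(X_1 \in B)\, f_{x|1}(y)\, d\theta(y).
$$
Applying (D) to the inner probability, $P_y(X_1 \in B) = P_y(X_1 \in J)\int_B f_{y|1}(z)\, d\theta(z)$, which is strictly positive for every $y \in J$ since $f_{y|1}$ is strictly positive on $J$ and $\theta(B) > 0$. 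Combined with $f_{x|1} > 0$ on $J$, the integrand above is strictly positive on $J$ and so $P_x(X_2 \in B) > 0$, establishing (B3) with $n=1$.

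The only delicate point is the implicit requirement that $P_z(X_t \in J) > 0$ for all $z \in J$ and $t > 0$; but this is already built into (D), since the existence of a density $f_{z|t}$ that is strictly positive on $J$ presupposes that the conditioning event $\{X_t \in J\}$ has positive probability. Hence no case analysis or iteration over $n$ is needed, and the main obstacle (which turns out to be illusory) is merely ensuring that this interpretation of (D) is consistent.
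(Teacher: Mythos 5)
Your proof is correct and rests on the same core observation as the paper's one-line argument: under (D) the conditional laws $P_{\cdot|t}$ all have strictly positive $\theta$-densities and are therefore mutually equivalent, so positive mass propagates from $P_{x'|1}$ to $P_{x|t}$. The only inessential difference is that your Markov-property expansion of $P_x(X_2\in B)$ is not needed -- once you have extracted $\theta(B)>0$ from $P_{x'}(X_1\in B)>0$, applying (D) directly at $t=2$ already yields $P_x(X_2\in B)=P_x(X_2\in J)\int_B f_{x|2}\,d\theta>0$.
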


\begin{proof} If (D) holds then for any pair $x,x' \in J$ and $t > 0$ we have that $P_{x'|1} \ll P_{x,t}$ with density given by 
	$$
	f(y):=\frac{f_{x'|1}(y)}{f_{x|t}(y)}.
	$$ In particular, (B3) follows at once.
\end{proof}

We see from Section \ref{sec:va} that the models in Sections \ref{sec:tou}, \ref{sec:rou} and \ref{sec:bbm} all satisfy (D) for $\theta$ given by the Lebesgue measure on $J$. In particular, we get that (B3) holds in all these examples. Finally, we note that Proposition \ref{prop:b2} can also be used to establish (B3) for the models of Sections \ref{sec:gw}-\ref{sec:cp}, since they both satisfy (D) with $\theta$ given by the counting measure.  

\subsection{Verifying strong supercriticality} \label{sec:supercritical}

As a first criterion for proving strong supercriticality, we observe that if Assumptions \ref{assumpG2} are satisfied and $\sup_{x \in J} \Phi_x < +\infty$ then $\xi$ is automatically strongly supercritical. Indeed, if $\sup_{x \in J} \Phi_x < +\infty$ then $\tilde{J}_n \equiv J$ for all $n$ sufficiently large, so that 
$$
\Theta=\left\{ \limsup_{t \rightarrow +\infty} \xi_t(\tilde{J}_n) > 0\right\}
$$ for any such $n \in \N$ and therefore, by Proposition \ref{prop:ss1}, we conclude that $\xi$ is strongly supercritical (recall that $\Phi_x < +\infty$ implies that $\sigma(x)<1$, and thus that $\xi^{(x)}$ is supercritical in the first place).

This simple criterion is already sufficient to establish strong supercriticality in the examples of Sections \ref{sec:gw} and \ref{sec:cp}. Indeed, since for both systems we have that $h$ is a Lyapunov functional for $X$ and $\inf_{x \in J} h(x) > 0$, it follows from \eqref{eq:compvc} that $\sup_{x \in J} \Phi_x < +\infty$ and so both dynamics are strongly supercritical. 

In the remaining systems of Section \ref{sec:examples}, however, we always have that $\sup_{x \in J} \Phi_x = +\infty$, so that establishing strong supercriticality (or its lack of) will require a more refined argument.

\subsubsection{Studying the rightmost particle} For the systems of Sections \ref{sec:rou}- \ref{sec:bbm}, this argument consists of studying the behavior of the rightmost particle. Indeed, assume that (B1) holds and notice then that by \eqref{eq:compvc}-\eqref{eq:bbmphi2} it follows that, for both systems, we have $\sup_{x \in B} \Phi_x < +\infty$ for all $B \in \B_{(0,+\infty)}$ which are bounded away from $0$. Therefore, the only way in which the event $\Gamma^{(x)}$ can occur is if all particles in $\xi$ tend \textit{uniformly} to zero as $t \rightarrow +\infty$. In particular, if we show that 
\begin{equation}\label{eq:maximo}
\limsup_{t \rightarrow +\infty} \left[\max_{u \in \overline{\xi}^{(x)}_t} u_t\right]> 0
\end{equation} almost surely on the event of non-extinction, then the strong supercriticality immediately follows. For the system of Section \ref{sec:bbm}, \eqref{eq:maximo} becomes a direct consequence of \cite[Theorem 9]{harris2006v1} and, moreover, the argument there can be adapted to hold also for the system of Section \ref{sec:rou}, so that both models turn out to be strongly supercritical. For completeness, we sketch the proof here for the latter below and refer to \cite{harris2006v1} for further details.

For each $x,t > 0$ let $\F_t^{(x)}:=\sigma(\xi^{(x)}_s : 0 \leq s \leq t)$ denote the $\sigma$-algebra generated by the dynamics until time $t$ and define $T^{(x)}_0 := \inf\{ t \geq 0 : X^{(x)}_t = 0\}$. Note that one has the inequality
$$
P_x( \Theta^c | \F_t) \geq \prod_{u \in \overline{\xi}_t^{(x)}} P_{u_t}( T_0 < E_r ) = \prod_{u \in \overline{\xi}_t^{(x)}} \E_{u_t}\left( e^{-rT_0} \right)
$$ where $E_r$ is an independent exponential random variable with parameter $r$, since extinction would follow if all particles in $\overline{\xi}^{(x)}_t$ hit the origin before branching. By dominating each $X^{(u_t)}$ from above by a Brownian motion started at $u_t$, standard expressions for the one-side exit problem of Brownian motion yield that
$$
P_x( \Theta^c | \F_t) \geq \prod_{u \in \overline{\xi}_t^{(x)}} e^{-\sqrt{2r}u_t} = \exp\left( \sqrt{2r}\sum_{u \in \overline{\xi}^{(x)}_t} u_t\right).
$$ Since clearly $\lim_{t \rightarrow +\infty} P_x(\Theta^c|\F_t)=0$ almost surely on the event of non-extinction, it follows that on this event one has
\begin{equation} \label{eq:inf}
\lim_{t \rightarrow +\infty} \sum_{u \in \overline{\xi}^{(x)}_t} u_t = +\infty.
\end{equation} Now, if for each $M > 0$ we denote by $\Upsilon^{(x)}_M$ the event that the branching dynamics $\xi^{(x)}$ is contained for all times on the strip $(0,M)$, then it is not hard to show that on the event $\{ \max_{u \in \overline{\xi}^{(x)}_t} u_t < M\}$ one has
$$
P_x(\Upsilon_M|\F_t) \leq \prod_{u \in \overline{\xi}^{(x)}_t} P_{u_t}( T_0 < T_M)
$$ where $T^{(x)}_M:=\inf\{ t \geq 0 : X^{(x)}_t = M\}$. By dominating each $X^{(u_t)}$ on the event $\{T_0^{(u_t)} < T^{(u_t)}_M\}$ from below by a Brownian motion with drift $-M$ started at $u_t$, again classical results but now for the two-side exit problem for Brownian motion yield that on $\{ \max_{u \in \overline{\xi}^{(x)}_t} u_t < M\}$ we have
$$
P_x(\Upsilon_M|\F_t) \leq \exp\left( - c_{\lambda,M} \sum_{u \in \overline{\xi}^{(x)}_t} u_t\right)
$$ for some constant $c_{\lambda,M} > 0$, from where using \eqref{eq:inf} one can deduce that $P_x(\Upsilon_M)=0$ for all $M$. Thus, from here it is easy to show that 
$$
P_x\left( \limsup_{t \rightarrow +\infty} \left[ \max_{u \in \overline{\xi}_t} u_t\right]\geq M \right) = P_x(\Theta)
$$ for all $M > 0$, from where \eqref{eq:maximo} follows.

\subsubsection{The dynamics $\xi$ associated to the transient O-U of Section \ref{sec:tou} is not strongly supercritical} To conclude our analysis of strong supercriticality for the systems of Section \ref{sec:examples}, we now prove this. Notice that, since Assumptions \ref{assumpG2} are all satisfied for this model, by Theorem \ref{theo:main3} it will suffice to exhibit a fixed point $g$ of the moment-generating operator $G$ which is different from $\eta$ and $\mathbf{1}$. Furthermore, since there is no absorption here, we have $\eta \equiv \mathbf{0}$ and therefore it will suffice to find a fixed point $g$ and $x \in \R$ such that $0 < g(x) < 1$. We will choose $g$ as 
$$
g(x):= P_x \left( \limsup_{t \rightarrow +\infty} \xi_t((-\infty,0]) = 0 \right),
$$ which can be checked to be a fixed point as in Proposition \ref{prop:fp}, and then show that $0< g(x)  <1$ for all $x > 0$ sufficiently large. 

Now, on the one hand, observe that by Theorem \ref{theo:main} we have that $\limsup_{t \rightarrow +\infty} \xi^{(x)}_t((-\infty,0]) > 0$ on the event $\{D^{(x)}_\infty > 0\}$ which has positive probability, so that necessarily $g(x) < 1$ for all $x \in \R$. To see that $g(x) > 0$ holds for every $x > 0$ sufficiently large, we will use a coupling argument. Indeed, recall that for each $t \geq 0$ one has the representation formula 
$$
X^{(x)}_t = x + \lambda \int_0^t X_s dt + \sigma B_t 
$$ so that if for $\gamma > 0$ we set 
$$
Y^{(x),\gamma}_t = x + \gamma t + \sigma B_t
$$ then $X^{(x)}_t \geq Y^{(x),\gamma}_t$ for all $t < T^{(x)}_{\frac{\gamma}{\lambda}}:=\inf\{ s \geq 0 : X^{(x)}_s \leq \frac{\gamma}{\lambda}\}$, with the convention that $\inf \emptyset = +\infty$. Now, let $\xi^{(x)}$ and $\xi^{(x),\gamma}$ denote the branching dynamics starting at $x$ associated to $X$ and $Y$, respectively. It follows from \cite[Theorem 4.17]{bocharov2012branching} that there exist $\gamma^*,x^* > 0$ such that for all $x \geq x^*$ one has
\begin{equation}
\label{eq:event1}
P_x\left( \xi^{\gamma^*}_t\left(\left(-\infty,\frac{x}{2}\right]\right) = 0 \text{ for all }t \right) = P_x\left( \min_{u \in \overline{\xi}^{\gamma^*}_t} u_t > \frac{x}{2} \text{ for all }t \right) > \frac{1}{2}.
\end{equation} Hence, if $x \geq \max\{x^*,2\frac{\gamma^*}{\lambda}\}$ then one can couple the dynamics $\xi^{(x)}$ and $\xi^{(x),\gamma^*}$ in such a way that on the event from \eqref{eq:event1} one has 
$$
\min_{u \in \overline{\xi}_t^{(x)}} u_t \geq \min_{u \in \overline{\xi}^{(x),\gamma^*}_t} u_t > \frac{\gamma^*}{\lambda} > 0
$$ for all $t \geq 0$. In particular, we obtain that $g(x) > \frac{1}{2}$ for any such $x$ and hence our claim is proved. To conclude, we observe that Theorem \ref{theo:main3} then implies that $\sigma(x) \neq \eta(x) = 0$ for all $x \in \R$ and, since also $\sigma(x) < 1$ for all $x \in \R$ by Theorem \ref{theo:main}, we obtain that $0 < \sigma(x) < 1$ for all $x \in \R$. 

\subsection{Verifying the $\overline{\Phi}_x <+\infty$ condition} 

Finally, we mention that the $\overline{\Phi}_x < +\infty$ condition will, in general, not follow from our previous assumptions. Still, if $X$ admits a Lyapunov functional $V$ satisfying (V4) and such that there exists a constant $C > 0$ for which
$$
\tilde{\E}_x(V^2(X_t)) \leq C(1+t+V^2(x))
$$ holds for all $t \geq 0$, then the reader may check by a direct computation that $\overline{\Phi}_x < +\infty$. Fortunately, this is the case in all our examples since for each of them we have, with $V$ chosen as in Section \ref{sec:examples2}, that $V^2$ is in the domain of $\tilde{\mathcal{L}}$, satisfies Dynkin's formula
		$$
		\tilde{\E}_x(V^2(X_t)) = V^2(x) + \int_0^t \tilde{\E}_x(\tilde{\mathcal{L}}(V^2)(X_s))ds
		$$ for all $t \geq 0$ and $x \in J$,
		and verifies $\tilde{\mathcal{L}}(V^2)(x) \leq C'(1+V(x))$ for some constant $C' > 0$ (although for the subcritical Galton-Watson process of Section \ref{sec:gw} we need to ask for the offspring distribution $\rho$ to have a finite third moment in order to have $V^2$ in the domain of $\tilde{\mathcal{L}}$). We omit the details, which are straightforward.

\subsection{Relaxing Assumptions \ref{assumpG} and the proof of Theorem \ref{theo:ergod}}\label{sec:ergod2}

It follows from the proof given in Section \ref{sec:proof} that Theorem \ref{theo:main} still holds if one exchanges the sequence $(J_n)_{n \in \N}$ defined in Assumptions \ref{assumpG} for any other increasing sequence of subsets of $J$ satisfying:
\begin{enumerate}
	\item [J1.] The union of all $J_n$ coincides with $J$. 
	\item [J2.] For any $B \in \mathcal{C}_X$ the asymptotic formula \eqref{A2} holds and the error term $s_B(\cdot,t)$ converges to zero as $t$ tends to infinity uniformly over each $J_n$.
	\item [J3.] The contents of Lemma \ref{lema:A3} remain true, i.e. for any $T > 0$ one has
	$$
	\lim_{n \rightarrow +\infty} \left[\sup_{t \in [0,T]} \E_x \left( M_t^2 \mathbbm{1}_{\{X_t \notin J_n\}}\right)\right] = 0.
	$$
\end{enumerate}
Furthermore, Theorem \ref{theo:main3} still holds if, in addition, we ask that $\inf_{x \in J_n} h(x) > 0$ for each $n$ and Theorem \ref{theo:main5} also remains true if we require that $\sup_{x \in J_n} V(x) <+\infty$ for each $n$ instead of (V3). 

For the particular case of ergodic motions considered in Section \ref{sec:cwa}, it is not hard to show that any increasing sequence $(J_n)_{n \in \N} \subseteq \B_J$ of open sets with compact closure such that their union gives all of $J$ (such a sequence exists because $J$ is locally compact and separable) satisfies (J1-2-3) for the class $\mathcal{C}_X$ given by \eqref{eq:defctilde}. Hence, we obtain that Theorem \ref{theo:main} holds in this case. Furthermore, that $\inf_{x \in J_n} h(x) > 0$ for each $n$ is immediate in this context and also that $\sup_{x \in J_n} V(x)<+\infty$ whenever $V$ is bounded over compact subsets of $J$. Thus, we also obtain Theorems \ref{theo:main3} and \ref{theo:main5}, so that Theorem \ref{theo:ergod} now follows.    

\section{Conclusions and future work}
Apart from a theoretical interest, our results have of course strong implications for the possibility of simulating quasi-stationary distributions whose closed form might be unknown. In that respect, $L^2$-convergence becomes a crucial feature to look at since it controls the variance of the outcome of the simulation.
Our results also open up the possibility of investigating more refined branching mechanisms in order to speed up simulations, but this is left for future research. Also, we plan to characterize the conditions for $L^2$-convergence in terms of $r,m$ and $\lambda$ as in Theorem \ref{theo:bbm} for more general L\'evy processes in a future work. Finally, the characterization of almost sure convergence beyond the $\lambda$-positive setting is also a thrilling open question.

\section*{Acknowledgements:}
Matthieu Jonckheere would like to warmly thank Elie A\"idekon, Julien Beresticky, Simon Harris, Pablo Groisman and Pascal Maillard for many fruitful discussions on branching dynamics. 

Santiago Saglietti would like to also thank Maria Eul\'alia Vares for useful suggestions on how to improve this manuscript and Bastien Mallein for very helpful discussions on these topics.

\bibliographystyle{plain}
\bibliography{biblio4}
 
\end{document}